\numberwithin{equation}{section}
\newtheorem{theorem}{Theorem}[section]
\newtheorem{lemma}[theorem]{Lemma}
\newtheorem{remark}[theorem]{Remark}
\newtheorem{proposition}[theorem]{Proposition}
\newtheorem*{claim*}{Claim}
\newcommand{\ef}{\eqref}
\newcommand{\dis}{\displaystyle}
\newcommand{\C}{\mathbb{C}}
\newcommand{\N}{\mathbb{N}}
\newcommand{\R}{\mathbb{R}}
\newcommand{\ep}{\varepsilon}
\newcommand{\intR}{\int_{\mathbb{R}^3}}
\newcommand{\IT}{\int_{\R^3}}
\newcommand{\LT}{L^2(\R^3)}
\newcommand{\HT}{H^1(\R^3, \C)}
\newcommand{\CA}{\mathcal{A}}
\newcommand{\CI}{\mathcal{I}}
\newcommand{\CM}{\mathcal{M}}
\newcommand{\CS}{\mathcal{S}}
\newcommand{\om}{\omega_{\mu}}
\newcommand{\la}{\lambda}
\title
[Schr\"odinger-Poisson system with a doping profile]
{Ground state solutions for Schr\"odinger-Poisson system with a doping profile}
\author[M.~Colin]{Mathieu Colin}
\author[T.~Watanabe]{Tatsuya Watanabe}
\address[M.~Colin]{\newline\indent
University of Bordeaux, CNRS, Bordeaux INP, 
IMB, UMR 5251,  F-33400, Talence, France
\newline\indent
 IMB, UMR 5251,  F-33400, Talence, France
}
\email{mathieu.colin@math.u-bordeaux.fr}
\address[T.~Watanabe]{\newline\indent 
Department of Mathematics, Faculty of Science, Kyoto Sangyo University,
\newline\indent
Motoyama, Kamigamo, Kita-ku, Kyoto-City, 603-8555, Japan}
\email{tatsuw@cc.kyoto-su.ac.jp}
\thanks{}
\subjclass[2010]{35J20, 35B35, 35Q55}
\date{\today}
\keywords{nonlinear Schr\"odinger-Poisson system, doping profile,
ground state solution}
\begin{document}

\begin{abstract}
This paper is devoted to the study of 
the nonlinear Schr\"odinger-Poisson system with a doping profile.
We are interested in the existence of ground state solutions
by considering the minimization problem on a Nehari-Pohozaev set.
The presence of a doping profile causes several difficulties, especially 
in the proof of the uniqueness of a maximum point of a fibering map.
A key ingredient is to establish the energy inequality inspired by \cite{TC}.
We also establish the relation between
ground state solutions and $L^2$-constraint minimizers obtained in \cite{CW4}.
When the doping profile is a characteristic function supported on 
a bounded smooth domain, some geometric quantities
related to the domain, such as the mean curvature,
are responsible for the existence of ground state solutions.
\end{abstract}

\maketitle

\section{Introduction}

In this paper, we are concerned with the following nonlinear 
Schr\"odinger-Poisson system:
\begin{equation} \label{eq:1.1}
\begin{cases}
& -\Delta u+\omega u+e \phi u=|u|^{p-1} u \\
& -\Delta \phi =\frac{e}{2} \left( |u|^2- \rho(x) \right)
\end{cases} 
\quad \hbox{in} \ \R^3,
\end{equation}
where $\omega >0 $, $e>0$ and $1<p<5$.
Equation \ef{eq:1.1} appears as a stationary problem for the 
time-dependent nonlinear Schr\"odinger-Poisson system:
\begin{equation} \label{eq:1.2}
\begin{cases}
i \psi_t + \Delta \psi - e\phi \psi + |\psi|^{p-1} \psi =0 
\quad \hbox{in} \ \R_+ \times \R^3, \\
-\Delta \phi = \frac{e}{2}\big( |\psi|^2-\rho(x) \big) 
\quad \hbox{in} \ \R_+ \times \R^3, \\
\psi(0,x)=\psi_0.
\end{cases}
\end{equation}
Indeed when we look for a standing wave of the form: $\psi(t,x)=e^{i \omega t} u(x)$,
we are led to the elliptic problem \ef{eq:1.1}.
In this paper, we are interested in the existence of 
ground state solutions of \ef{eq:1.1}
and their relation with $L^2$-constraint minimizers obtained in \cite{CW4}.

The Schr\"odinger-Poisson system appears 
in various fields of physics, such as quantum mechanics,
black holes in gravitation and plasma physics.
Especially, the Schr\"odinger-Poisson system plays an important role
in the study of semi-conductor theory; see \cite{Je, MRS, Sel},
and then the function $\rho(x)$ is referred as \textit{impurities} 
or a \textit{doping profile}.
The doping profile comes from 
the difference of the number densities of positively charged donor ions
and negatively charged acceptor ions,
and the most typical examples are characteristic functions, step functions 
or Gaussian functions.
Equation \ef{eq:1.1} also appears as a stationary problem
for the Maxwell-Schr\"odinger system.
We refer to \cite{BF, CW, CW2} for the physical background
and the stability result of standing waves for the Maxwell-Schr\"odinger system.
In this case, the constant $e$ describes the strength of the interaction
between a particle and an external electromagnetic field.

The nonlinear Schr\"odinger-Poisson system with $\rho \equiv 0$:
\begin{equation} \label{eq:1.3}
\begin{cases}
& -\Delta u+\omega u+e \phi u=|u|^{p-1} u \\
& -\Delta \phi =\frac{e}{2} |u|^2
\end{cases} 
\quad \hbox{in} \ \R^3
\end{equation}
has been studied widely in the last two decades.
Especially, the existence of non-trivial solutions and 
ground state solutions of \ef{eq:1.3}
has been considered in detail.
Furthermore, the existence of associated $L^2$-constraint minimizers 
depending on $p$ and the size of the mass
and their stability have been investigated as well.
We refer to e.g. \cite{AP, BJL, BS1, CDSS, CW, JL, K2, LZH, R, WL1, WL2, ZZ}
and references therein.
On the other hand, the nonlinear Schr\"odinger-Poisson system with
a doping profile is less studied.
In \cite{DeLeo, DR}, the corresponding 1D problem has been considered.
Moreover, the linear Schr\"odinger-Poisson system 
(that is, the problem \ef{eq:1.1} without $|u|^{p-1}u$)
with a doping profile in $\R^3$ has been studied in \cite{Ben1, Ben2}.
In \cite{CW4}, the authors have investigated the existence of 
stable standing waves for \ef{eq:1.2} by considering the corresponding
$L^2$-minimization problem.
As far as we know, there is no literature 
concerning with the existence of ground state solutions of \ef{eq:1.1},
which is exactly the purpose of this paper.

To state our main results, let us give some notation. 
For $u \in H^1(\R^3,\C)$, 
the energy functional associated with \ef{eq:1.1} is given by
\begin{align}
\mathcal{I}(u) &= \frac{1}{2} \intR |\nabla u |^2 \,dx + \frac{\omega}{2} \intR |u|^2 \,dx
-\frac{1}{p+1} \intR |u|^{p+1} \,d x + e^2 \CA(u). \label{eq:1.4}
\end{align}
Here we denote the nonlocal term by $S(u)=S_0(u)+S_1$ with
\[
\begin{aligned}
S_0(u)(x)&:=(-\Delta)^{-1} \left( \frac{|u(x)|^2}{2} \right) 
=\frac{1}{8 \pi |x|} *|u(x)|^2, \\
S_1(x)&:= (-\Delta)^{-1} \left( \frac{- \rho(x)}{2} \right) 
= -\frac{1}{8 \pi |x|} * \rho(x),
\end{aligned}
\]
and the functional corresponding to the nonlocal term by
\[
\CA(u) 
:= \frac{1}{4} \intR S(u) \big( |u|^2-\rho(x) \big) \,dx
= \frac{1}{32\pi} \intR \intR
\frac{\big( |u(x)|^2-\rho(x) \big) \big( |u(y)|^2-\rho(y) \big)}{|x-y|} \,dx\,dy.
\]
A function $u_0$ is said to be a \textit{ground state solution} (GSS) 
of \ef{eq:1.1}
if $u_0$ has a least energy among all nontrivial solutions of \ef{eq:1.1},
namely $u_0$ satisfies
\[
\mathcal{I}(u_0) = \inf \{ \mathcal{I}(u) \mid u \in \HT
\setminus \{ 0 \}, \ \mathcal{I}'(u)=0 \}.
\]

For the doping profile $\rho$, we assume that
\begin{equation} \label{eq:1.5}
\rho(x) \in L^{\frac{6}{5}}(\R^3) \cap L^q_{loc}(\R^3) \ \text{for some} \ q>3, 
\quad x \cdot \nabla \rho(x) \in L^{\frac{6}{5}}(\R^3),
\quad x \cdot (D^2 \rho(x) x) \in L^{\frac{6}{5}}(\R^3),
\end{equation}
where $D^2 \rho$ is the Hessian matrix of $\rho$, and 
\begin{equation} \label{eq:1.6}
\rho(x) \ge 0, \ \not\equiv 0 \quad \text{for} \ x \in \R^3.
\end{equation}
Typical examples are the Gaussian function $\rho(x) =\ep e^{-\alpha |x|^2}$
and $\rho(x) =\frac{\ep}{1+\alpha |x|^r}$ for $r> \frac{5}{2}$.

In this setting, our first main result can be expressed as follows.

\begin{theorem} \label{thm:1.1}
Suppose that $2<p<5$ and assume \ef{eq:1.5}-\ef{eq:1.6}.
There exists $\rho_0$ independent of $e$, $\rho$ such that if
\[
e^2 \left(\|\rho\|_{L^{\frac{6}{5}}(\R^3)}
+\|x \cdot \nabla \rho\|_{L^{\frac{6}{5}}(\R^3)}
+\| x \cdot (D^2 \rho x ) \|_{L^{\frac{6}{5}}(\R^3)} \right) \le \rho_0,
\]
then \ef{eq:1.1} has a ground state solution $u_0$.
Moreover any ground state solution of \ef{eq:1.1} is real-valued
up to phase shift.
\end{theorem}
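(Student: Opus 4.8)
The plan is to realise the ground state energy of \ef{eq:1.1} as the minimum of $\mathcal{I}$ over a Nehari--Pohozaev constraint, and to recover the missing compactness from a strict energy comparison with the limit problem \ef{eq:1.3}; the smallness of $e^2\|\rho\|_*$ (where $\|\rho\|_*:=\|\rho\|_{L^{6/5}}+\|x\cdot\nabla\rho\|_{L^{6/5}}+\|x\cdot(D^2\rho\,x)\|_{L^{6/5}}$) enters through the uniqueness of the maximiser of the fibering map and the coercivity of $\mathcal{I}$ on the constraint. For $u\in\HT\setminus\{0\}$, $t>0$, set $u_t(x):=t^2u(tx)$ and $g_u(t):=\mathcal{I}(u_t)$. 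First I would compute $g_u$: the local part of $\mathcal{I}$ gives $\tfrac{t^3}{2}\|\nabla u\|_2^2+\tfrac{\omega t}{2}\|u\|_2^2-\tfrac{t^{2p-1}}{p+1}\|u\|_{p+1}^{p+1}$, whereas $e^2\mathcal{A}(u_t)$ is \emph{not} homogeneous in $t$ because $\rho$ is fixed; writing $\mathcal{A}(u)=\tfrac18\|(-\Delta)^{-1/2}(|u|^2-\rho)\|_2^2\ge0$ and changing variables, the $t$-dependence of the $\rho$-terms is, after integration by parts, controlled by $\rho$, $x\cdot\nabla\rho$ and $x\cdot(D^2\rho\,x)$ --- which is exactly why these quantities occur in \ef{eq:1.5} and in the hypothesis. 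Differentiating at $t=1$ yields
\[
J(u):=g_u'(1)=\tfrac32\|\nabla u\|_2^2+\tfrac\omega2\|u\|_2^2-\tfrac{2p-1}{p+1}\|u\|_{p+1}^{p+1}+e^2P(u),\qquad P(u):=\tfrac{d}{dt}\mathcal{A}(u_t)\big|_{t=1},
\]
with $|P(u)|\le C\big(\|u\|_{H^1}^4+\|\rho\|_*\|u\|_{H^1}^2\big)$. Using the group law $g_u(ts)=g_{u_s}(t)$ one gets $J(u_t)=t\,g_u'(t)$, and testing $\mathcal{I}'(u)=0$ against $\tfrac{d}{dt}u_t|_{t=1}=2u+x\cdot\nabla u$ (licit after the elliptic regularity granted by $\rho\in L^q_{loc}$) shows that every nontrivial solution of \ef{eq:1.1} lies on $\mathcal{M}:=\{u\in\HT\setminus\{0\}:J(u)=0\}$. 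Since $2<p<5$ gives $2p-1>3$, $g_u(t)\to-\infty$ as $t\to\infty$ while $g_u(0^+)=\mathcal{I}(0)=e^2\mathcal{A}(0)>0$, so $g_u$ has a global maximum.

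The heart of the argument --- and the step I expect to be the main obstacle --- is to show that when $e^2\|\rho\|_*\le\rho_0$ the fibering map $g_u$ has a \emph{unique} critical point, hence $1$ is the unique maximiser of $g_u$ for $u\in\mathcal{M}$ and $g_u''(1)<0$ there. When $\rho\equiv0$ the equation $g_u'(t)=0$ reduces to $\tfrac32\|\nabla u\|_2^2+\tfrac\omega2\|u\|_2^2\,t^{-2}=\tfrac{2p-1}{p+1}\|u\|_{p+1}^{p+1}\,t^{2p-4}$, whose left side strictly decreases and right side strictly increases in $t$, giving uniqueness; the doping profile superimposes the non-monotone perturbation $e^2P(u_t)/t$, so, following \cite{TC}, I would establish an energy inequality
\[
\mathcal{I}(u)-\mathcal{I}(u_t)\ \ge\ \frac{1-t^{3}}{3}\,J(u)+\Theta(t)\big(\|\nabla u\|_2^2+\|u\|_2^2\big)-C\,e^2\|\rho\|_*\,\Psi(t)\big(1+\|\nabla u\|_2^2+\|u\|_2^2\big)
\]
with $\Theta(t)>0$ for $t\ne1$ and $\Psi$ locally bounded, so that for $e^2\|\rho\|_*\le\rho_0$ (with $\rho_0$ depending only on $\omega,p$) the right-hand side is positive for $t\ne1$ whenever $J(u)=0$; this forces $1$ to be the unique maximiser on $\mathcal{M}$, and perturbing the $\rho\equiv0$ computation gives $g_u''(1)\le-\delta<0$ uniformly on $\mathcal{M}$. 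As consequences, $c:=\inf_\mathcal{M}\mathcal{I}=\inf_{u\ne0}\max_{t>0}\mathcal{I}(u_t)$, substituting $J(u)=0$ into $\mathcal{I}$ gives $\mathcal{I}(u)\ge C\|u\|_{H^1}^2+e^2\mathcal{A}(0)$ on $\mathcal{M}$ (so $c>0$ and $\mathcal{I}|_\mathcal{M}$ is coercive), and the fibering projection $u\mapsto t(u)$ is continuous.

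It remains to attain $c$. Take a minimising sequence $(u_n)\subset\mathcal{M}$; coercivity gives boundedness in $H^1$, and $J(u_n)=0$ together with $\mathcal{A}\ge0$ gives $\liminf_n\|u_n\|_{p+1}>0$, excluding vanishing. By Ekeland's principle applied on $\mathcal{M}$ and $g_{u_n}''(1)\le-\delta$ (which kills the Lagrange multiplier), $(u_n)$ may be taken to be a Palais--Smale sequence for $\mathcal{I}$, so up to a subsequence $u_n\rightharpoonup u_0$ with $\mathcal{I}'(u_0)=0$, and the splitting lemma gives $u_n-u_0-\sum_k w_k(\cdot-y_n^k)\to0$ in $H^1$, where $|y_n^k|\to\infty$ and the $w_k$ are nontrivial critical points of the limit functional $\mathcal{I}_\infty$ obtained by setting $\rho=0$ in $\mathcal{I}$. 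If $u_0\ne0$ and at least one $w_k$ occurs, then $c=\mathcal{I}(u_0)+\sum_k\mathcal{I}_\infty(w_k)\ge c+c_\infty$, impossible since $c_\infty>0$; if $u_0=0$ then (by non-vanishing) at least one $w_k$ occurs and, the $\rho$-self-interaction $e^2\mathcal{A}(0)$ remaining in the energy, $c=\sum_k\mathcal{I}_\infty(w_k)+e^2\mathcal{A}(0)\ge c_\infty+e^2\mathcal{A}(0)$. But the latter is excluded by the strict inequality $c<c_\infty+e^2\mathcal{A}(0)$: indeed, letting $u_\infty>0$ be the ground state of \ef{eq:1.3} and $t_*$ the maximiser of $t\mapsto\mathcal{I}((u_\infty)_t)$, the identity $\mathcal{A}(v)-\mathcal{A}_\infty(v)=-\tfrac14\intR\big(\tfrac1{4\pi|x|}*\rho\big)|v|^2\,dx+\mathcal{A}(0)$ and $\mathcal{I}_\infty((u_\infty)_{t_*})\le\mathcal{I}_\infty(u_\infty)=c_\infty$ give
\[
c\ \le\ \mathcal{I}((u_\infty)_{t_*})\ \le\ c_\infty-\tfrac{e^2}4\intR\big(\tfrac1{4\pi|x|}*\rho\big)|(u_\infty)_{t_*}|^2\,dx+e^2\mathcal{A}(0)\ <\ c_\infty+e^2\mathcal{A}(0),
\]
the strict inequality coming from $\rho\ge0,\ \not\equiv0$ and $u_\infty>0$. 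Hence no $w_k$ occurs and $u_0\ne0$, so $u_n\to u_0$ strongly, $u_0\in\mathcal{M}$, $\mathcal{I}(u_0)=c$, and $u_0$ solves \ef{eq:1.1}: it is a ground state. Finally, for an arbitrary ground state $u_0$, the diamagnetic inequality $\|\nabla|u_0|\|_2\le\|\nabla u_0\|_2$ gives $g_{|u_0|}(t)\le g_{u_0}(t)$ for all $t>0$, hence $\max_t g_{|u_0|}(t)=c$ and $|u_0|$ projects to a minimiser on $\mathcal{M}$; equality must then hold in the diamagnetic inequality, forcing $u_0=e^{i\theta}|u_0|$ for a constant $\theta$, and the strong maximum principle applied to $|u_0|$ completes the proof.
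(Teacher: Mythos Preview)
Your overall architecture matches the paper's: minimise $I:=\mathcal I-e^2\mathcal A(0)$ on the Nehari--Pohozaev set $\mathcal M=\{J=0\}$, prove an energy inequality \`a la \cite{TC} to get uniqueness of the fibering maximum, and recover compactness from the strict comparison $\sigma<\sigma_\infty$ (equivalently your $c<c_\infty+e^2\mathcal A(0)$). One point needs correcting: the energy inequality you wrote cannot hold in that form. In the identity $I(u)-I(u_t)=\tfrac{1-t^3}{3}J(u)+\cdots$ the gradient term $\|\nabla u\|_2^2$ scales exactly like $t^3$ and is therefore absorbed entirely into $\tfrac{1-t^3}{3}J(u)$; what actually survives on the right is $\tfrac{(1-t)^2(t+2)\omega}{6}\|u\|_2^2+\tfrac{g(t)}{3(p+1)}\|u\|_{p+1}^{p+1}+R(t,u)$ with $g(t)=3t^{2p-1}-(2p-1)t^3+2p-4\ge0$, and the remainder is controlled by $e^2\|\rho\|_*\big(\|u\|_2^2+\|u\|_{p+1}^2\big)$ (no ``$1+$'' and no $\|\nabla u\|_2^2$). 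Consequently, to absorb the error you first need the lower bound $\|u\|_{p+1}\ge\delta_1$ on $\{J\le0\}$ (which follows as in Lemma~\ref{lem:3.2}); only then does the inequality yield uniqueness of the fibering critical point and, as you use later, $g_u''(1)\le-\delta$ uniformly on $\mathcal M$.

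Where you genuinely diverge from the paper is in two sub-steps. For the natural constraint, the paper shows $\mu=0$ by a linear-algebra argument on the system $\{I,J,J'(u)u,Q\}$ (Lemma~\ref{lem:3.5}); your route via $g_u''(1)\le-\delta$ is cleaner and equivalent. For compactness, the paper does \emph{not} pass to a Palais--Smale sequence: it works directly with a minimising sequence on $\mathcal M$, proves $u_0\ne0$ by contradiction (translating and comparing with $\sigma_\infty$), and then uses the energy inequality plus Brezis--Lieb/Fatou to force $J(u_0)=0$ and $I(u_0)=\sigma$ (Proposition~\ref{prop:4.2}). Your Ekeland-plus-profile-decomposition route is valid provided you check the Brezis--Lieb splitting for the nonlocal term $D$ (Lemma~\ref{lem:2.3}) and the compactness of the $\rho$-terms $E_1,E_2$ under weak convergence (Lemma~\ref{lem:2.4}), so that the bubbles are indeed critical points of $I_\infty$; the paper's direct argument avoids the global splitting lemma at the cost of a slightly longer two-step analysis. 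The final diamagnetic/phase argument is the same as the paper's.
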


When $1<p<2$, we are able to obtain the existence of a \textit{radial}
ground state solution of \ef{eq:1.1}, 
which is a weak result; see Section 6 below.

\smallskip
Our second purpose of this paper is to investigate the relation
between the ground state solution of \ef{eq:1.1} obtained in 
Theorem \ref{thm:1.1} and the $L^2$-constraint minimizer in \cite{CW4}.
There has been significant progress in this relationship in recent years;
see \cite{CW3, DST, JL} for this direction.
Based on the terminology in \cite{CW3, DST},
we call $u_0$ obtained in Theorem \ref{thm:1.1}
an \textit{action ground state solution} of \ef{eq:1.1}.

To state our second main result, we define  
the energy functional $\mathcal{E}: \HT \to \R$ by 
\begin{align*}
\mathcal{E}(u) &=\frac{1}{2} \intR |\nabla u |^2 \,dx 
-\frac{1}{p+1} \intR |u|^{p+1} \,d x + e^2 \mathcal{A}(u). 
\end{align*}
For $\mu>0$, let us consider the minimization problem:
\begin{equation} \label{eq:1.7}
\mathcal{C}(\mu)= \inf_{ u \in B(\mu) } \mathcal{E}(u),
\end{equation}
where $B(\mu) = \{ u \in H^1(\R^3,\C) \mid \| u \|_{L^2(\R^3)}^2=\mu \}$.
In this setting, the constant $\omega$ in \ef{eq:1.1} 
appears as a Lagrange multiplier.
We also define the energy associated with \ef{eq:1.3}:
\[
E_{\infty}(u):=
\frac{1}{2} \| \nabla u \|_2 ^2
-\frac{1}{p+1} \intR |u|^{p+1} \,dx
+\frac{e^2}{4} \intR S_0(u) |u|^2 \,dx.
\]
Indeed if we assume $\rho(x) \to 0$ as $|x| \to \infty$,
\ef{eq:1.3} can be seen as a problem at infinity. 
We define the minimum energy associated with \ef{eq:1.3} by
\begin{equation*} 
c_{e, \infty}(\mu) = c_{\infty}(\mu) := \inf_{u \in B(\mu)} E_{\infty}(u).
\end{equation*}
The existence of minimizers for $c_{e,\infty}(\mu)$ has been studied widely;
we refer to \cite{BJL, BS1, CW, CDSS} and references therein.
Especially in the case $2<p< \frac{7}{3}$, 
$c_{e,\infty}(\mu)$ is attained if $c_{e,\infty}(\mu)<0$.
Moreover there exists $\mu^*=\mu^*(e)>0$ such that
\begin{equation} \label{eq:1.8}
\mu^* = \inf \{ \mu > 0 \mid c_{e, \infty}(\mu) < 0 \}.
\end{equation} 

In \cite{CW4}, under the additional assumptions:
\begin{equation} \label{Ass}
\text{There exist $\alpha>2$ and $C>0$ such that} \ 
\rho(x) \le \frac{C}{1+|x|^{\alpha}} \quad \text{for} \ x \in \R^3,
\end{equation}
\begin{equation} \label{ASS}
\inf_{u \in H^1(\R^3,\C), \| u \|_{\LT}^2=1} 
\IT \left( |\nabla u|^2 + 2e^2 S_1(x) |u|^2 \right) dx = 0,
\end{equation}
it was shown that 
if $2<p<\frac{7}{3}$ and $\mu > 2 \cdot 2^{\frac{1}{2p-4}} \mu^*$,  
there exists $\rho_0= \rho_0(e,\mu)>0$ such that 
if $\|\rho\|_{L^{\frac{6}{5}}(\R^3)}
+\|x \cdot \nabla \rho\|_{L^{\frac{6}{5}}(\R^3)} \le \rho_0$,
the minimization problem \ef{eq:1.7} admits a minimizer $u_{\mu}$
and the associated Lagrange multiplier $\omega=\omega(\mu)$ is positive.

As in \cite{CW3, DST}, we call $u_{\mu}$ an 
\textit{energy ground state solution} of \ef{eq:1.1}.
Our second main result of this paper is the following.

\begin{theorem} \label{thm:1.2}
Let $\mu > 2 \cdot 2^{\frac{1}{2p-4}} \mu^*$ be given
and suppose that $2<p<\frac{7}{3}$.
Under the assumptions in Theorem \ref{thm:1.1}
and \ef{Ass}-\ef{ASS}, the following properties hold.

\noindent
{\rm(i)} 
The energy ground state solution $u_{\mu}$ is 
an action ground state solution of \ef{eq:1.1} with $\omega=\omega_{\mu}$.

\noindent
{\rm(ii)} Let $\Omega(\mu)$ be the set of 
Lagrange multipliers associated with energy ground state solutions for $B_{\mu}$,
namely
\begin{align*}
\Omega(\mu) := \left\{ \right.
\omega_\mu>0 \mid  
&\ \hbox{$\omega_\mu$ is the Lagrange multiplier 
associated with an energy ground state } \\
& \hbox{of \ef{eq:1.1} under the constraint $B_{\mu}$}
\left. \right\},
\end{align*}
and $w_{\mu}$ be an action ground state solution 
of \ef{eq:1.1} with $\omega= \om$.
Then $w_{\mu}$ is an energy ground state solution of \ef{eq:1.1} 
under the constraint $B_{\mu}$.
\end{theorem}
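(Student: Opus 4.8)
The plan is to combine the elementary splitting $\mathcal{I}_\omega(u)=\mathcal{E}(u)+\tfrac{\omega}{2}\|u\|_{\LT}^2$ — here I write $\mathcal{I}_\omega$, $\mathcal{M}_\omega$ and $m_\omega:=\inf_{\mathcal{M}_\omega}\mathcal{I}_\omega$ for the action functional of \ef{eq:1.1}, its Nehari--Pohozaev set and the action ground state level, all regarded as functions of the parameter $\omega$ — with two structural facts about the scaling $u^t(x):=t^2u(tx)$ underlying the fibering analysis of Theorem \ref{thm:1.1} (the precise normalisation is immaterial; what matters is that it realises every mass and has the fibering property): (a) $\|u^t\|_{\LT}^2=t\,\|u\|_{\LT}^2$, so for fixed $u\neq 0$ the map $t\mapsto\|u^t\|_{\LT}^2$ is a bijection of $(0,\infty)$ onto itself; and (b) if $u\in\mathcal{M}_\omega$ then $t\mapsto\mathcal{I}_\omega(u^t)$ attains its \emph{unique} global maximum at $t=1$, i.e. $\mathcal{I}_\omega(u^t)<\mathcal{I}_\omega(u)$ for every $t\neq 1$. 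Throughout, $u_\mu$ denotes an energy ground state solution of \ef{eq:1.1} under $B(\mu)$ with Lagrange multiplier $\omega_\mu>0$, as provided by \cite{CW4}.

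\emph{Part (i).} Since $u_\mu$ solves \ef{eq:1.1} with $\omega=\omega_\mu$, it satisfies both the Nehari and the Pohozaev identities (the latter being legitimate thanks to the regularity of $u_\mu$ coming from $\rho\in L^q_{loc}$, $q>3$), so $u_\mu\in\mathcal{M}_{\omega_\mu}$ and hence $\mathcal{I}_{\omega_\mu}(u_\mu)\ge m_{\omega_\mu}$. For the reverse inequality, let $w$ be an action ground state solution of \ef{eq:1.1} with $\omega=\omega_\mu$, given by Theorem \ref{thm:1.1}: $w\in\mathcal{M}_{\omega_\mu}$ and $\mathcal{I}_{\omega_\mu}(w)=m_{\omega_\mu}$. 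Put $\nu:=\|w\|_{\LT}^2$ and $\tau:=\mu/\nu$; by (a), $w^\tau\in B(\mu)$, and by (b), $\mathcal{I}_{\omega_\mu}(w^\tau)\le\mathcal{I}_{\omega_\mu}(w)=m_{\omega_\mu}$. Therefore
\[
\mathcal{C}(\mu)+\tfrac{\omega_\mu}{2}\mu\;\le\;\mathcal{E}(w^\tau)+\tfrac{\omega_\mu}{2}\mu\;=\;\mathcal{I}_{\omega_\mu}(w^\tau)\;\le\;m_{\omega_\mu}.
\]
On the other hand $\mathcal{I}_{\omega_\mu}(u_\mu)=\mathcal{E}(u_\mu)+\tfrac{\omega_\mu}{2}\mu=\mathcal{C}(\mu)+\tfrac{\omega_\mu}{2}\mu$, so together with $\mathcal{I}_{\omega_\mu}(u_\mu)\ge m_{\omega_\mu}$ we obtain
\[
\mathcal{I}_{\omega_\mu}(u_\mu)=m_{\omega_\mu}\qquad\text{and}\qquad \mathcal{C}(\mu)+\tfrac{\omega_\mu}{2}\mu=m_{\omega_\mu},
\]
and the first equality says precisely that $u_\mu$ is an action ground state solution of \ef{eq:1.1} with $\omega=\omega_\mu$.

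\emph{Part (ii).} Let $\omega_\mu\in\Omega(\mu)$ and let $w_\mu$ be an action ground state solution of \ef{eq:1.1} with $\omega=\omega_\mu$. By definition of $\Omega(\mu)$ there is an energy ground state under $B_\mu$ whose multiplier is $\omega_\mu$, to which Part (i) applies; hence $m_{\omega_\mu}=\mathcal{C}(\mu)+\tfrac{\omega_\mu}{2}\mu$. Set $\nu:=\|w_\mu\|_{\LT}^2$ and $\tau:=\mu/\nu$; then $w_\mu^\tau\in B(\mu)$, and since $w_\mu\in\mathcal{M}_{\omega_\mu}$, property (b) gives $\mathcal{I}_{\omega_\mu}(w_\mu^\tau)\le\mathcal{I}_{\omega_\mu}(w_\mu)=m_{\omega_\mu}$, with equality \emph{only if} $\tau=1$. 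Combining,
\[
m_{\omega_\mu}=\mathcal{C}(\mu)+\tfrac{\omega_\mu}{2}\mu\;\le\;\mathcal{E}(w_\mu^\tau)+\tfrac{\omega_\mu}{2}\mu\;=\;\mathcal{I}_{\omega_\mu}(w_\mu^\tau)\;\le\;m_{\omega_\mu},
\]
so every inequality is an equality, which forces $\tau=1$, i.e. $\|w_\mu\|_{\LT}^2=\mu$. Consequently $\mathcal{E}(w_\mu)=\mathcal{I}_{\omega_\mu}(w_\mu)-\tfrac{\omega_\mu}{2}\mu=m_{\omega_\mu}-\tfrac{\omega_\mu}{2}\mu=\mathcal{C}(\mu)$, so $w_\mu$ is an energy ground state solution of \ef{eq:1.1} under the constraint $B_\mu$.

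\emph{Main obstacle.} Once (a)--(b) are available, both parts are essentially book-keeping; the real difficulty is concentrated in fact (b) for \emph{variable} $\omega$ — the uniqueness, hence strictness, of the maximum point of the fibering map in the presence of the doping profile $\rho$ — which is exactly the delicate point established in the proof of Theorem \ref{thm:1.1} via the energy inequality inspired by \cite{TC}, and which is used here as a black box. Two further technical checks are needed: that the Pohozaev identity is legitimate for the energy ground state $u_\mu$ under \ef{eq:1.5} (by elliptic regularity), and that the smallness threshold $\rho_0$ can be taken small enough that Theorem \ref{thm:1.1} and the $L^2$-minimization result of \cite{CW4} hold simultaneously for the value $\omega=\omega_\mu$ arising from the constrained problem.
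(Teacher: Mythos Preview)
Your proof is correct and follows essentially the same approach as the paper. The paper proves the two parts as Propositions 5.1 and 5.2, using the same splitting $I_{\omega}(u)=E(u)+\tfrac{\omega}{2}\|u\|_2^2$, the same scaling $(w_\mu)_\lambda=\lambda^2 w_\mu(\lambda x)$ with $\lambda=\mu/\|w_\mu\|_2^2$, and the same fibering maximality; the only cosmetic difference is that where you invoke your packaged fact (b) (unique strict maximum of the fibering map on $\mathcal{M}_\omega$), the paper appeals directly to the quantitative energy inequality \ef{eq:3.13}, whose $(1-\lambda)^2$ remainder is what forces $\lambda=1$ in Part (ii).
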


We emphasize that up to authors' knowledge, 
Theorem \ref{thm:1.2} is new even for the case $\rho \equiv 0$.
(See also Remark \ref{rem:5.3} below.)

\smallskip
The assumption \ef{eq:1.5} rules out the case 
$\rho$ is a characteristic function supported on a bounded smooth domain.
Even in this case, we are still able to obtain the existence of 
ground state solutions and their relation
under a smallness condition on some geometric quantities related to the domain;
See Section 7.

\smallskip
Here we briefly explain our strategy and its difficulty.
For the existence of a ground state solution of \ef{eq:1.1},
we adapt a strategy in \cite{AP, R}, that is,
we aim to obtain a ground state solution as a minimizer of 
the \textit{Nehari-Pohozaev} constraint $0=J(u)= 2N(u)-P(u)$,
where $N(u)=0$ is the Nehari identity and $P(u)=0$ is the 
Pohozaev identity for \ef{eq:1.1}.
It is standard to show that for any $u \in \HT$, 
there exists $\la = \la_u>0$ such that $J(u_\la)=0$ provided that
$u_\la (x)=t^2u(\la x)$.
A crucial point is then to show that $\la_u$ is unique.
In \cite{R} for the case $\rho \equiv 0$,
this was carried out by considering up to third derivatives of the fibering map,
which requires an assumption involving third derivative of $\rho$ in our case.
To avoid this difficulty, we follows the idea in \cite{TC} to establish 
the following \textit{energy inequality}:
\begin{align} \label{eq:1.9}
I(u)-I(u_\la)
&\ge \frac{1-\la^3}{3} J(u) +\frac{(1-\la)^2 \omega}{3} \| u\|_2^2
+ \frac{\alpha}{6(p+1)} (1-\la)^2 \| u \|_{p+1}^{p+1} \notag \\
&\quad - \beta (1-\la)^2 e^2 
\left( \| \rho \|_{\frac{6}{5}} + \| x \cdot \nabla \rho \|_{\frac{6}{5}}
+ \| x \cdot (D^2 \rho x) \|_{\frac{6}{5}} \right)
\left( \| u\|_2^2 +\| u \|_{p+1}^2\right)
\end{align}
for any $u \in \HT$, $0 \le \la \le T$ and some $T$, $\alpha$, $\beta>0$.
This key inequality is enough to prove the uniqueness of $\la_u$ for 
$u \in \HT$ with $J(u) \le 0$.
Moreover we can apply the energy inequality to show the relation
between two types of ground state solutions of \ef{eq:1.1}.

As we can easily imagine, if a doping profile $\rho$ is considered, 
scaling arguments do not work straightforwardly 
because of the loss of spatial homogeneity. 
Furthermore the presence of the doping profile $\rho$ 
satisfying \ef{eq:1.5} and \ef{eq:1.6} causes additional difficulties.
Firstly we cannot expect that a remainder term in \ef{eq:1.9}
is zero.
Secondly we are not able to use the concentration function as in \cite{AP}.
See Remark \ref{rem:3.8} and Remark \ref{rem:4.3} for details.

Compared to \cite{CW4}, 
we impose one additional assumption on $\rho$,
which is the $L^{\frac{6}{5}}$-integrability of
$x \cdot (D^2 \rho x)$.
Unlike the existence of minimizers,
we need detailed geometric properties of the functional $\mathcal{I}$.
Especially we apply the second-order Taylor expansion to obtain \ef{eq:1.9}, 
causing that some estimate for $x \cdot (D^2 \rho x)$ is required.

When $\rho$ is a characteristic function, 
further consideration is required
because $\rho$ cannot be weakly differentiable.
In this case, a key of the proof is the \textit{sharp boundary trace inequality}
which was developed in \cite{A},
and a variation of domain
related with the \textit{calculus of moving surfaces} 
due to Hadamard \cite{Gri}.
Then by imposing a smallness condition of some geometric quantities
related to the support of $\rho$, 
we are able to obtain the existence of ground state solutions.

In this paper, we are only concerned with the case $\| \rho \|_{L^{\frac{6}{5}}(\R^3)}$ is small.
For the moment, we don't know what happens if $\| \rho \|_{L^{\frac{6}{5}}(\R^3)}$ is large.
Especially, it is interesting to ask if the non-existence result can be obtained for this case.
It is also worth mentioning that in \cite{Ben1, Ben2} where the problem \ef{eq:1.1} without $|u|^{p-1}u$ was studied, 
the existence of a ground state solution has been shown under the assumption:
\begin{equation} \label{eq:1.11}
\inf \left\{\int_{\mathbb{R}^3}\left(|\nabla \varphi|^2+S_1(x) \varphi^2\right) \,d x, \ 
\int_{\mathbb{R}^3}|\varphi|^2 \,dx =1 \right\}<0
\end{equation}
and for small $\omega >0$.
By Lemma \ref{lem:2.2} below, \ef{eq:1.11} cannot be satisfied when 
$\| \rho \|_{L^{\frac{6}{5}}(\R^3)}$ is small.
It is an another interesting question if we could replace \ef{eq:1.5}-\ef{eq:1.6} by \ef{eq:1.11}
to establish the existence of a ground state solution of \ef{eq:1.1}.

\smallskip
This paper is organized as follows.
In Section 2, we introduce several properties of the energy functional
and some lemmas which will be used later on.
We investigate basic properties of the Nehari-Pohozaev set in Section 3.
In Section 4, we prove the existence of a ground state solution of \ef{eq:1.1}
by applying the concentration compactness principle
and completes the proof of Theorem \ref{thm:1.1}. 
Section 5 is devoted to the relation between 
action ground state solutions and energy ground state solutions. 
We also study the case $1<p<2$ in Section 6.
In Section 7, we finish this paper   
by considering the case $\rho$ is a characteristic function
and present the existence of ground state solutions 
and their relation for this case.

Hereafter in this paper, 
unless otherwise specified, we write $\| u\|_{L^{p}(\R^3)}=\| u\|_p$.
We also set $\| u \|^2 := \| \nabla u \|_2^2 + \| u \|_2^2$.

\section{Variational formulation and preliminaries}

The aim of this section is to prepare several properties of the energy functional
and present intermediate lemmas which will be used later on.

\subsection{Decomposition of the energy} \ 

In this subsection, we rewrite the energy functional $\mathcal{I}$
in a more convenient way.
We put 
\[
A(u)= \| \nabla u \|_2^2, \quad
B(u)= \| u \|_2^2, \quad
C(u)= \| u \|_{p+1}^{p+1},
\]
and decompose $\CI$ in the following way:
\[
\begin{aligned}
\CI(u)
&= \frac{1}{2} A(u) + \frac{\omega}{2} B(u) -\frac{1}{p+1} C(u) \\
&\quad +\frac{e^2}{4} \intR S_0(u) |u|^2 \,dx 
+\frac{e^2}{4} \intR S_1 |u|^2 \,dx 
-\frac{e^2}{4} \intR S_0(u) \rho(x) \,dx 
-\frac{e^2}{4} \intR S_1 \rho(x) \,dx. 
\end{aligned}
\]
Next we define three nonlocal terms:
\begin{align}
D(u)&=\frac{1}{4} \intR S_0(u) |u|^2 \,dx, \label{eq:2.1} \\
E_1(u) &= -\frac{1}{4} \intR S_0(u) \rho(x) \,dx
= -\frac{1}{32\pi} \intR \intR \frac{|u(y)|^2 \rho(x)}{|x-y|} \,dx \,dy
=\frac{1}{4} \intR S_1 |u|^2 \,dx, \label{eq:2.2} \\
F &=-\frac{1}{4} \intR S_1 \rho(x) \,dx. \notag
\end{align}
Note that $F$ is independent of $u \in \HT$.
Then we are able to write $\mathcal{I}$ defined in \ef{eq:1.4} in the following form:
\[
\mathcal{I}(u)= \frac{1}{2} A(u)
+ \frac{\omega}{2} B(u)
-\frac{1}{p+1} C(u) 
+e^2 D(u) + 2e^2 E_1(u) + e^2 F.
\]

Now it is convenient to define $I(u):=\mathcal{I}(u)-e^2 F$, which yields that
\begin{equation} \label{eq:2.3}
I(u) = \frac{1}{2}A(u) + \frac{\omega}{2} B(u) - \frac{1}{p+1}C(u) 
+ e^2 D(u) +2e^2 E_1(u).
\end{equation}
Since $F$ is independent of $u$, 
we have only to consider the 
existence of nontrivial critical point of $I$. 
Recalling that 
\[
S_0(u)(x)=(-\Delta)^{-1} \left(\frac{|u(x)|^2}{2}\right) \geq 0, 
\]
we find that  
\begin{equation} \label{eq:2.4}
A(u), \ B(u), \ C(u), \ 
D(u) \ge 0 \quad \hbox{for all} \ u \in H^1(\R^3,\C).
\end{equation}
For later use, let us also define
\begin{align}
E_2(u) &:= \frac{1}{2} \intR S_0(u) x \cdot \nabla \rho (x) \,dx
= \frac{1}{2} \intR S_2 |u|^2 \,dx, \label{eq:2.5} \\
E_3(u) &:= \frac{1}{2} \intR S_0(u) x \cdot (D^2 \rho (x)x) \,dx
= \frac{1}{2} \intR S_3 |u|^2 \,dx, \notag \\
S_2(x) &= (-\Delta )^{-1} \left( \frac{x \cdot \nabla \rho(x)}{2} \right)
= \frac{1}{8 \pi |x|} * \big( x \cdot \nabla \rho(x) \big), \notag \\
S_3(x) &= (-\Delta )^{-1} \left( \frac{x \cdot (D^2 \rho (x) x )}{2} \right)
= \frac{1}{8 \pi |x|} * \big( x \cdot (D^2 \rho(x) x) \big), \notag 
\end{align}
which is well-defined for $u \in H^1(\R^3,\C)$ by \ef{eq:1.5}.

\subsection{Derivatives of nonlocal terms} \

We next investigate Frechlet derivatives of $D$, $E_1$ and $E_2$
which will be needed later.

\begin{lemma} \label{lem:2.1}

{\rm(i)} \ For $\varphi \in H^1(\mathbb{R}^3, \mathbb{C})$, it holds that
\[
\begin{aligned}
D^{\prime}(u) \varphi &=\int_{\mathbb{R}^3} S_0(u) u \bar{\varphi} \,d x, \\
E_1^{\prime}(u) \varphi 
&= \frac{1}{2} \int_{\mathbb{R}^3} S_1 u \bar{\varphi} \,d x, \\
E_2^{\prime}(u) \varphi &= \int_{\mathbb{R}^3} S_2 u \bar{\varphi} d x.
\end{aligned}
\]
Especially we have
\[
\begin{aligned}
D^{\prime}(u) u &=\int_{\mathbb{R}^3} S_0(u)|u|^2 \,d x =4 D(u), \\
E_1^{\prime}(u) u &=\frac{1}{2} \int_{\mathbb{R}^3} S_1 |u|^2 \,d x
=- \frac{1}{2} \int_{\mathbb{R}^3} S_0(u) \rho(x) \,d x =2 E_1(u), \\
E_2^{\prime}(u) u &=\int_{\mathbb{R}^3} S_2 |u|^2 \,d x
=\int_{\mathbb{R}^3} S_0(u) x \cdot \nabla \rho(x) \,d x =2 E_2(u). 
\end{aligned}
\]

{\rm(ii)} \ It follows that
\[
\begin{aligned}
\lim_{R \to \infty} \int_{B_R(0)} S_0(u) u x \cdot \nabla \bar{u} \,dx 
&=-\frac{5}{4} \int_{\mathbb{R}^3} S_0(u) |u|^2 \,dx = -5 D(u), \\
\lim_{R \to \infty} \int_{B_R(0)} S_1(u) u x \cdot \nabla \bar{u} \,dx 
&= \frac{5}{2} \int_{\mathbb{R}^3} S_0(u) \rho(x) \,d x
+\frac{1}{2} \intR S_0(u) x \cdot \nabla \rho(x) \,d x \\
&=-10 E_1(u)+ E_2(u), \\
\lim_{R \to \infty} \int_{B_R(0)} S_2(u) u x \cdot \nabla \bar{u} \,dx 
&= -3 \int_{\mathbb{R}^3} S_0(u) x \cdot \nabla \rho(x) \,d x
-\frac{1}{2} \intR S_0(u) x \cdot (D^2 \rho(x)x) \,d x \\
&=-6 E_2(u)-E_3(u).
\end{aligned}
\]
\end{lemma}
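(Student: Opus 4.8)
\emph{Sketch of the argument.}
For part (i), the functionals $D$, $E_1$, $E_2$ are explicit polynomial forms in $u$ --- quartic for $D$, quadratic for $E_1$ and $E_2$ --- so I would compute the Fr\'echet derivatives by differentiating each factor and exploiting the symmetry and self-adjointness of convolution with $|x|^{-1}$. Writing $D(u)=\frac{1}{32\pi}\iint\frac{|u(x)|^2|u(y)|^2}{|x-y|}\,dx\,dy$ and differentiating the two quadratic factors, the symmetry of the Coulomb kernel collapses the two resulting terms into one, giving $D'(u)\varphi=\int_{\R^3}S_0(u)\RE(u\bar\varphi)\,dx$ (the real part being implicit in the statement). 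For $E_1=-\frac14\int_{\R^3}S_0(u)\rho\,dx$ only the factor $|u|^2$ inside $S_0(u)$ depends on $u$; differentiating it and moving the convolution onto $\rho$ turns $-\frac14\,|x|^{-1}\!*\rho$ into a constant multiple of $S_1$, and the same computation with $x\cdot\nabla\rho$ in place of $-\rho$ handles $E_2$. Setting $\varphi=u$ and reading off the normalizations from \ef{eq:2.1}, \ef{eq:2.2}, \ef{eq:2.5}, together with the alternative expressions for $E_1$ and $E_2$ recorded there, gives the ``especially'' identities.

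For part (ii), the plan is to reduce to real scalar integrals via the pointwise identity $2\RE(u\,x\cdot\nabla\bar u)=x\cdot\nabla|u|^2$ (these expressions occur inside the Pohozaev functional, so the identities are understood with real parts, equivalently after replacing $u\,x\cdot\nabla\bar u$ by $\tfrac12\,x\cdot\nabla|u|^2$), to integrate by parts on the ball $B_R(0)$, and then to let $R\to\infty$. Using $x\cdot\nabla|u|^2=\Div(x|u|^2)-3|u|^2$, the divergence theorem gives, for $S\in\{S_0(u),S_1,S_2\}$,
\[
\int_{B_R}S\,(x\cdot\nabla|u|^2)\,dx
= R\!\int_{\partial B_R}\!S\,|u|^2\,dS-3\!\int_{B_R}\!S\,|u|^2\,dx-\int_{B_R}\!|u|^2\,(x\cdot\nabla S)\,dx.
\]
The key ingredient is the algebraic relation: if $S_h:=\frac{1}{8\pi|x|}*h$, then $x\cdot\nabla S_h=2S_h+S_{\widetilde h}$ with $\widetilde h(y):=y\cdot\nabla h(y)$, which follows from the decomposition $x\cdot(x-y)=|x-y|^2+y\cdot(x-y)$ and an integration by parts in the variable $y$. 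Applying it with $h=-\rho$ and $h=x\cdot\nabla\rho$ --- where $\widetilde{(-\rho)}=-(y\cdot\nabla\rho)$ and $\widetilde{(x\cdot\nabla\rho)}=y\cdot\nabla\rho+y\cdot(D^2\rho\,y)$ --- gives $x\cdot\nabla S_1=2S_1-S_2$ and $x\cdot\nabla S_2=3S_2+S_3$, both manifestly in $L^6(\R^3)$, while $h=|u|^2$ gives $x\cdot\nabla S_0(u)$ in terms of $S_0(u)$ and a Riesz potential. For this last case, which is the only term not obviously globally integrable, I would instead use $|u|^2=-2\Delta S_0(u)$ together with the classical Pohozaev computation $\int_{B_R}\Delta v\,(x\cdot\nabla v)\,dx=\tfrac12\int_{B_R}|\nabla v|^2\,dx+o(1)$ to obtain $\int_{\R^3}|u|^2\,x\cdot\nabla S_0(u)\,dx=-2D(u)$. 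Substituting these back into the boundary-term identity and converting to $D$, $E_1$, $E_2$, $E_3$ via \ef{eq:2.1}--\ef{eq:2.5} yields the three stated limits.

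I expect the main obstacle to be the passage to the limit and the justification of the manipulations. Since $u\in H^1(\R^3,\C)$ one only controls $|u|^2\in W^{1,1}(\R^3)\cap L^3(\R^3)$ and, by the Hardy--Littlewood--Sobolev inequality together with \ef{eq:1.5}, $S_0(u),S_1,S_2,S_3\in L^6(\R^3)$, so $x\cdot\nabla u$ need not belong to $L^2(\R^3)$ and one genuinely has to work on $B_R$ before passing to the limit. The boundary term $R\int_{\partial B_R}S\,|u|^2\,dS$ does not go to zero pointwise in $R$, but since $S\,|u|^2\in L^1(\R^3)$ one has $\int_0^\infty\big(\int_{\partial B_R}|S|\,|u|^2\,dS\big)\,dR<\infty$, hence $\liminf_{R\to\infty}R\int_{\partial B_R}|S|\,|u|^2\,dS=0$, and the identities are obtained along a suitable sequence $R_n\to\infty$; the boundary terms in the Pohozaev computation for $S_0(u)$ are controlled the same way using $\nabla S_0(u)\in L^2(\R^3)$. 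One also has to check that the Fubini exchanges and the integration by parts in $y$ are legitimate, which follows from the finiteness of all the Coulomb-type double integrals involved, guaranteed by $|u|^2\in L^{6/5}(\R^3)\cap L^3(\R^3)$ and the integrability hypotheses \ef{eq:1.5} on $\rho$, $x\cdot\nabla\rho$ and $x\cdot(D^2\rho\,x)$. It is exactly in the $S_2$-identity, through the potential $S_3$, that the $L^{6/5}$-bound on $x\cdot(D^2\rho\,x)$ in \ef{eq:1.5} is needed.
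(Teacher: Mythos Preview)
Your proposal is correct and follows essentially the same route as the paper: differentiate the quartic/quadratic forms for part (i), and for part (ii) integrate by parts on $B_R$, compute $\int_{\R^3}|u|^2\,x\cdot\nabla S_j\,dx$, and pass to the limit along a sequence $R_n\to\infty$ on which the boundary terms vanish. The only cosmetic difference is that you package the key step as a pointwise identity $x\cdot\nabla S_h = 2S_h + S_{\tilde h}$ obtained from the kernel decomposition $x\cdot(x-y)=|x-y|^2+y\cdot(x-y)$, whereas the paper reaches the same conclusion by writing $\tfrac12\int|u|^2\,x\cdot\nabla S_j = \int\nabla S_0(u)\cdot\nabla(x\cdot\nabla S_j) = -\int S_0(u)\,\Delta(x\cdot\nabla S_j)$ and using $\Delta(x\cdot\nabla S_j)=2\Delta S_j+x\cdot\nabla\Delta S_j$; for the $S_0$ case the paper derives a self-referential equation rather than invoking the Pohozaev identity for $v=S_0(u)$, but the content is the same.
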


\begin{proof}
(i) We observe from \ef{eq:2.1} that
\[
\begin{aligned}
D^{\prime}(u) \varphi
&= \frac{1}{4} \int_{\mathbb{R}^3} S_0'(u) \varphi |u|^2 \,dx
+\frac{1}{2} \int_{\R^3} S_0(u) u \bar{\varphi} \,dx \\
&= \frac{1}{16\pi} \intR \intR 
\frac{u(y) \overline{\varphi(y)} |u(x)|^2}{|x-y|} \,dy \,dx 
+\frac{1}{2} \int_{\R^3} S_0(u) u \bar{\varphi} \,dx \\
&= \frac{1}{2} \int_{\mathbb{R}^3} S_0(u) u \bar{\varphi} \,d x
+\frac{1}{2} \int_{\mathbb{R}^3} S_0(u) u \bar{\varphi} \,d x
=\int_{\mathbb{R}^3} S_0(u) u \bar{\varphi} \,d x.
\end{aligned}
\]
The derivatives of $E_1$ and $E_2$ can be derived readily by \ef{eq:2.2} and \ef{eq:2.5}.

(ii) 
By the divergence theorem and the fact $S_0(u)|u|^2 \in L^1(\R^3)$,
arguing as in \cite{BL, Ca}, one has
\[
\begin{aligned}
&\lim_{R \to \infty} \int_{B_R(0)} S_0(u) u x \cdot \nabla \bar{u} \,dx \\
&= \lim_{R \to \infty} \left\{ \frac{1}{2} 
\int_{\partial B_R(0)} S_0(u)|u|^2 x \cdot n \,dS
-\frac{3}{2} \int_{B_R(0)} S_0(u)|u|^2 \,d x
-\frac{1}{2} \int_{B_R(0)} |u|^2 x \cdot \nabla S_0(u) \,d x \right\} \\
&= -\frac{3}{2} \int_{\R^3} S_0(u)|u|^2 \,d x
-\frac{1}{2} \intR |u|^2 x \cdot \nabla S_0(u) \,d x .
\end{aligned}
\]
Using 
\[
-\Delta S_0(u)=\frac{1}{2} |u|^2 \quad \text{and} \quad 
\intR | \nabla S_0(u)|^2 \,dx = \frac{1}{2} \intR S_0(u) |u|^2 \,dx,
\]
we find that
\[
\begin{aligned}
\frac{1}{2} \intR |u|^2 x \cdot \nabla S_0(u) \,d x 
& =\int_{\mathbb{R}^3} \nabla S_0(u) \cdot 
\nabla \big( x \cdot \nabla S_0(u) \big) d x \\
&= \int_{\mathbb{R}^3} |\nabla S_0(u)|^2 \,d x
+\int_{\mathbb{R}^3} \nabla S_0(u) \cdot \left(D^2 S_0(u) x\right) dx \\
&= \int_{\mathbb{R}^3} |\nabla S_0(u)|^2 \,d x
-\intR S_0(u) \Delta S_0(u) \,d x
-\intR S_0(u) x \cdot \nabla \big(\Delta S_0(u) \big) dx \\
&= \intR |\nabla S_0(u)|^2 \,dx 
+\frac{1}{2} \int_{\mathbb{R}^3} S_0(u)|u|^2 \,dx
+\frac{1}{2} \intR S_0(u) x \cdot \nabla |u|^2 \,dx \\
&= -\frac{1}{2} \intR S_0(u) |u|^2 \,dx
-\frac{1}{2} \int_{\R^3} |u|^2 x \cdot \nabla S_0(u) \,dx.
\end{aligned}
\]
This implies that 
\[
\intR |u|^2 x \cdot \nabla S_0(u) \,d x
= - \frac{1}{2} \intR S_0(u) |u|^2 \,dx
\]
and hence
\[
\lim_{R \to \infty} \int_{B_R(0)} S_0(u) u x \cdot \nabla \bar{u} \,dx 
=-\frac{5}{4} \int_{\R^3} S_0(u) |u|^2 \,dx.
\]
Next since $\Delta S_1= \frac{\rho}{2}$, it holds that
\[
\begin{aligned}
\frac{1}{2} \int_{\mathbb{R}^3} |u|^2 x \cdot \nabla S_1 \,dx
&= \int_{\mathbb{R}^3} \nabla S_0(u) \cdot \nabla (x \cdot \nabla S_1 ) \,d x \\
&= -\int_{\mathbb{R}^3} S_0(u) \rho(x) \,d x
-\frac{1}{2} \int_{\mathbb{R}^3} S_0(u) x \cdot \nabla \rho(x) \,d x,
\end{aligned}
\]
yielding that
\[
\begin{aligned}
\lim_{R \to \infty} \int_{B_R(0)} S_1(u) u x \cdot \nabla \bar{u} \,dx 
&= -\frac{3}{2} \int_{\mathbb{R}^3} S_1 |u|^2 \,d x
-\frac{1}{2} \int_{\mathbb{R}^3} |u|^2 x \cdot \nabla S_1 \,d x \\
&= \frac{5}{2} \int_{\mathbb{R}^3} S_0(u) \rho(x) \,d x
+\frac{1}{2} \int_{\mathbb{R}^3} S_0(u) x \cdot \nabla \rho(x) \,d x .
\end{aligned}
\]
Similarly from $\Delta S_2= - \frac{x \cdot \nabla \rho}{2}$, 
we obtain
\[
\frac{1}{2} \int_{\R^3} |u|^2 x \cdot \nabla S_2 \,dx
= \frac{3}{2} \int_{\R^3} S_0(u) x \cdot \nabla \rho(x) \,dx 
+\frac{1}{2} \int_{\R^3} S_0(u) x \cdot (D^2 \rho(x) x ) \,dx,
\]
and hence
\[
\begin{aligned}
\lim_{R \to \infty} \int_{B_R(0)} S_2(u) u x \cdot \nabla \bar{u} \,dx 
& =\int_{\mathbb{R}^3} S_2 u x \cdot \nabla \bar{u} \,dx 
=-\frac{3}{2} \int_{\mathbb{R}^3} S_2 |u|^2 \,d x
-\frac{1}{2} \int_{\mathbb{R}^3} |u|^2 x \cdot \nabla S_2 \,dx \\
&= -3 \int_{\mathbb{R}^3} S_0(u) x \cdot \nabla \rho(x) \,d x
-\frac{1}{2} \int_{\mathbb{R}^3} S_0(u) x \cdot (D^2\rho(x) x ) \,dx.
\end{aligned}
\]
This completes the proof.
\end{proof}

\subsection{Estimates of nonlocal terms} \ 

This subsection is devoted to present
estimates for the nonlocal terms.

\begin{lemma} \label{lem:2.2}

For any $u \in H^1(\R^3,\C)$, 
$S_0$, $D$, $E_1$, $E_2$ and $E_3$ satisfy the estimates: 
\[
\begin{aligned}
\| S_0(u) \|_{6} 
&\le C\| \nabla S_0(u) \|_{2} 
\le C \| u\|_{{\frac{12}{5}}}^2
\le C \| u \|^2, \\
\| \nabla S_0(u) \|_2
&\le C \| u \|_2^{\frac{5p-7}{3(p-1)}} \| u \|_{p+1}^{\frac{p+1}{3(p-1)}}
\le C( \| u \|_2^2 + \| u \|_{p+1}^2)
\quad \text{if} \ \ 2<p<5, \\
D(u) &\leq C \| S_0(u) \|_6 \| u \|_{\frac{12}{5}}^2
\le C \| u \|^4, \\
|E_1(u)| &\le \frac{1}{4} \|S_0(u) \|_{6} \| \rho \|_{\frac{6}{5}} 
\le C \| \rho \|_{\frac{6}{5}} \| u \|^2, \\
|E_2(u)| &\le \frac{1}{2} \| S_0(u) \|_{6} 
\| x \cdot \nabla \rho \|_{\frac{6}{5}}
\le C \| x \cdot \nabla \rho \|_{\frac{6}{5}} \| u \|^2, \\
|E_3(u)| &\le \frac{1}{2} \| S_0(u) \|_{6} 
\| x \cdot (D^2 \rho x) \|_{\frac{6}{5}}
\le C \| x \cdot (D^2 \rho x) \|_{\frac{6}{5}} \| u \|^2.
\end{aligned}
\]

\end{lemma}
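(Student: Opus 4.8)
The plan is to obtain all six estimates from three classical ingredients — the Sobolev embedding $\dot H^1(\R^3)\hookrightarrow L^6(\R^3)$, the Hardy--Littlewood--Sobolev inequality, and Hölder's inequality — supplemented by elementary interpolation and Young's inequality; no compactness or regularity beyond this is needed.

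I would start from $S_0(u)$. Testing $-\Delta S_0(u)=\tfrac12|u|^2$ against $S_0(u)$ gives the identity $\|\nabla S_0(u)\|_2^2=\tfrac12\int_{\R^3}S_0(u)|u|^2\,dx=\tfrac{1}{16\pi}\int_{\R^3}\int_{\R^3}\tfrac{|u(x)|^2|u(y)|^2}{|x-y|}\,dx\,dy$. Applying the Hardy--Littlewood--Sobolev inequality in $\R^3$ with $\lambda=1$ and the dual pair $p=q=6/5$ (so that $\tfrac1p+\tfrac1q+\tfrac{\lambda}{3}=2$) bounds the double integral by $C\bigl\||u|^2\bigr\|_{6/5}^2=C\|u\|_{12/5}^4$, hence $\|\nabla S_0(u)\|_2\le C\|u\|_{12/5}^2$; combining with the Sobolev inequality $\|S_0(u)\|_6\le C\|\nabla S_0(u)\|_2$ and the embedding $H^1(\R^3)\hookrightarrow L^{12/5}(\R^3)$ (valid since $2\le 12/5\le 6$) gives the first chain. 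For the second chain I would instead estimate $\|u\|_{12/5}$ by interpolating between $L^2$ and $L^{p+1}$: writing $\tfrac{5}{12}=\tfrac{\theta}{2}+\tfrac{1-\theta}{p+1}$ one finds $2\theta=\tfrac{5p-7}{3(p-1)}$ and $2(1-\theta)=\tfrac{p+1}{3(p-1)}$, which indeed sum to $2$ and are admissible exactly because $2\le\tfrac{12}{5}\le p+1$ when $p>2$. Squaring and feeding this into $\|\nabla S_0(u)\|_2\le C\|u\|_{12/5}^2$ produces the stated powers, and the weighted AM--GM inequality $a^{\theta}b^{1-\theta}\le\theta a+(1-\theta)b$ with $a=\|u\|_2^2$, $b=\|u\|_{p+1}^2$ upgrades this to the bound by $C(\|u\|_2^2+\|u\|_{p+1}^2)$.

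The four remaining estimates then follow from a single application of Hölder's inequality in the pair $(6,6/5)$. For $D(u)=\tfrac14\int_{\R^3}S_0(u)|u|^2\,dx$ one has $D(u)\le\tfrac14\|S_0(u)\|_6\|u\|_{12/5}^2$, and the $S_0$-bounds above give $D(u)\le C\|u\|^4$. Likewise, using the representations $E_1(u)=-\tfrac14\int_{\R^3}S_0(u)\rho\,dx$, $E_2(u)=\tfrac12\int_{\R^3}S_0(u)\,x\cdot\nabla\rho\,dx$, $E_3(u)=\tfrac12\int_{\R^3}S_0(u)\,x\cdot(D^2\rho\,x)\,dx$ from \ef{eq:2.2}--\ef{eq:2.5} (well defined under \ef{eq:1.5}), Hölder against $\|S_0(u)\|_6$ yields the weights $\|\rho\|_{6/5}$, $\|x\cdot\nabla\rho\|_{6/5}$, $\|x\cdot(D^2\rho\,x)\|_{6/5}$, and $\|S_0(u)\|_6\le C\|u\|^2$ closes each line. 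The only step needing any care is the bookkeeping of the interpolation exponents in the second $S_0$-estimate together with the verification that the Hardy--Littlewood--Sobolev hypotheses and the constraint $2\le 12/5\le p+1$ hold — this is where $2<p<5$ is used — while everything else is entirely routine.
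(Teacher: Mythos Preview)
Your proposal is correct and follows essentially the same approach as the paper, which simply refers to \cite{R} for the $S_0(u)$ estimates and states that the remaining bounds follow from the H\"older and Sobolev inequalities. You have supplied the details the paper omits --- the Hardy--Littlewood--Sobolev step for $\|\nabla S_0(u)\|_2\le C\|u\|_{12/5}^2$ and the $L^2$--$L^{p+1}$ interpolation for the second chain --- and these are exactly the standard ingredients behind the cited reference; one minor remark is that only $p>2$ (ensuring $12/5\le p+1$) is actually needed for the interpolation, the upper bound $p<5$ plays no role here.
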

For the proof of the inequality on $S_0(u)$, we refer to \cite{R}.
The other estimates can be obtained by the H\"older inequality
and the Sobolev inequality.

\subsection{Convergence properties of nonlocal terms} \ 

Next we introduce the Brezis-Lieb type convergence result for $D(u)$.

\begin{lemma} \label{lem:2.3}
Assume that $u_n \rightharpoonup u$ in $H^1(\R^3, \C)$. Then it follows that
\begin{align*}
\lim_{n \to \infty} \left\{ D(u_n-u) -D(u_n)+D(u) \right\} =0.
\end{align*}
Moreover if $u_n \to u$ in $L^{\frac{12}{5}}(\R^3)$, we also have
\[
S_0(u_n) \to S_0(u) \ \ \text{in} \ L^6(\R^3)
\quad \text{and} \quad 
\lim_{n \to \infty} D(u_n) = D(u).
\]
\end{lemma}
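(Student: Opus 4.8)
The plan is to exploit the fact that $D$ is the diagonal of a positive-definite symmetric bilinear form. Setting
\[
\mathcal{D}(f,g):=\frac{1}{32\pi}\intR\intR \frac{f(x)g(y)}{|x-y|}\,dx\,dy,
\]
one has $D(v)=\mathcal{D}(|v|^2,|v|^2)$, and by the Hardy--Littlewood--Sobolev inequality (the very estimate behind the bound for $D$ in Lemma~\ref{lem:2.2}) the form $\mathcal{D}$ is continuous on $L^{\frac{6}{5}}(\R^3)$, namely $|\mathcal{D}(f,g)|\le C\|f\|_{\frac65}\|g\|_{\frac65}$; moreover $\mathcal{D}(f,f)\ge 0$, so $\mathcal{D}$ is an inner product and the Cauchy--Schwarz inequality $|\mathcal{D}(f,g)|\le \mathcal{D}(f,f)^{1/2}\mathcal{D}(g,g)^{1/2}$ is available (its seminorm being a multiple of the $\dot H^{-1}(\R^3)$-norm). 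Writing $w_n=u_n-u$ and $r_n:=|u_n|^2-|w_n|^2-|u|^2=2\RE(\bar u\,w_n)$, I would insert $|u_n|^2=|w_n|^2+|u|^2+r_n$ and expand by bilinearity to obtain the algebraic identity
\[
D(u_n-u)-D(u_n)+D(u)
=-2\mathcal{D}(|w_n|^2,|u|^2)-2\mathcal{D}(|w_n|^2,r_n)-2\mathcal{D}(|u|^2,r_n)-\mathcal{D}(r_n,r_n).
\]
It then suffices to show that each of the four terms tends to $0$. Since $u_n\rightharpoonup u$, the sequence $w_n\rightharpoonup 0$ is bounded in $\HT$ and, by Rellich's theorem, $w_n\to 0$ strongly in $L^q(K)$ for every bounded $K$ and every $q\in[1,6)$ (so no passage to a subsequence is needed, the local convergence holding for the full sequence). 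The densities $|w_n|^2$, $|u|^2$ and the cross term $r_n$ are bounded in $L^{\frac65}(\R^3)$ by the Sobolev embedding, hence bounded for the $\mathcal{D}$-seminorm; therefore, by Cauchy--Schwarz, all three terms containing $r_n$ will vanish once the single key estimate $\mathcal{D}(r_n,r_n)\to0$ is established.

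The hard part is precisely this estimate $\mathcal{D}(r_n,r_n)\to0$, that is, $r_n\to0$ in the nonlocal seminorm; it does \emph{not} follow from boundedness, since $\|r_n\|_{\frac65}$ need not vanish (for instance when $w_n$ oscillates). To overcome it I would fix $\delta>0$, decompose $u=u^\delta+\tilde u$ with $u^\delta\in C_c^\infty(\R^3)$ and $\|\tilde u\|<\delta$, and split accordingly $r_n=2\RE(\bar u^\delta w_n)+2\RE(\bar{\tilde u}\,w_n)$. For the remainder, HLS gives $\mathcal{D}\big(2\RE(\bar{\tilde u}w_n),\cdot\big)^{1/2}\le C\|\tilde u\|_{\frac{12}{5}}\|w_n\|_{\frac{12}{5}}\le C\delta$; for the main piece, supported in the fixed compact set $K=\mathrm{supp}\,u^\delta$, Rellich yields $\|2\RE(\bar u^\delta w_n)\|_{\frac65}\le C\|w_n\|_{L^{\frac65}(K)}\to0$, whence its $\mathcal{D}$-seminorm tends to $0$. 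Letting $n\to\infty$ and then $\delta\to0$ gives $\mathcal{D}(r_n,r_n)\to0$.

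The only remaining term is $\mathcal{D}(|w_n|^2,|u|^2)=\tfrac14\intR S_0(u)|w_n|^2$. Here mass may escape to infinity, so local compactness alone is insufficient; instead I would use that $S_0(u)\in L^6(\R^3)$ (Lemma~\ref{lem:2.2}) and split $\R^3=B_R\cup B_R^{c}$. The inner part obeys $\int_{B_R}S_0(u)|w_n|^2\le \|S_0(u)\|_{L^6(B_R)}\|w_n\|_{L^{\frac{12}{5}}(B_R)}^2\to0$ by Rellich for fixed $R$, while the outer part obeys $\int_{B_R^{c}}S_0(u)|w_n|^2\le \|S_0(u)\|_{L^6(B_R^{c})}\,\|w_n\|_{\frac{12}{5}}^2$, which is uniformly small for large $R$ since $S_0(u)\in L^6(\R^3)$; choosing $R$ large then $n$ large absorbs the escaping mass. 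Combining the four estimates yields the Brezis--Lieb identity.

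For the second assertion the argument is routine and uses only Lemma~\ref{lem:2.2} and Hölder's inequality: strong convergence $u_n\to u$ in $L^{\frac{12}{5}}(\R^3)$ gives $\||u_n|^2-|u|^2\|_{\frac65}\le \||u_n|+|u|\|_{\frac{12}{5}}\|u_n-u\|_{\frac{12}{5}}\to0$, whence $S_0(u_n)\to S_0(u)$ in $L^6(\R^3)$ by the linear $L^{\frac65}\to L^6$ bound; then, writing
\[
D(u_n)-D(u)=\frac14\intR\big(S_0(u_n)-S_0(u)\big)|u_n|^2\,dx+\frac14\intR S_0(u)\big(|u_n|^2-|u|^2\big)\,dx
\]
and estimating both integrals by Hölder together with the boundedness of $\|u_n\|_{\frac{12}{5}}$ gives $D(u_n)\to D(u)$. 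I expect the only genuinely delicate point to be the nonlocal limit $\mathcal{D}(r_n,r_n)\to0$ above; everything else reduces to HLS, Rellich and Hölder.
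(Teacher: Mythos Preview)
Your argument is correct and complete. The paper itself does not prove Lemma~\ref{lem:2.3} but merely refers to \cite[Lemma~2.2]{ZZ}, so your self-contained treatment actually supplies more than the paper does. Your organization---writing $D$ as the diagonal of the positive semidefinite bilinear form $\mathcal{D}$, expanding the difference by bilinearity, and then using Cauchy--Schwarz for $\mathcal{D}$ to reduce the two mixed terms $\mathcal{D}(|w_n|^2,r_n)$ and $\mathcal{D}(|u|^2,r_n)$ to the single estimate $\mathcal{D}(r_n,r_n)\to 0$---is a clean way to structure the argument; the density/Rellich argument for $r_n$ and the inner/outer splitting for $\mathcal{D}(|w_n|^2,|u|^2)$ are the standard ingredients one finds in the literature (and essentially what \cite{ZZ} does). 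The second assertion is handled exactly as one expects.
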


\begin{proof}
The proof can be found in \cite[Lemma 2.2]{ZZ}.
\end{proof}

As for $E_1$ and $E_2$, we have the following compactness property,
which follows from the integrability of $\rho$ and $x \cdot \nabla \rho$.

\begin{lemma} \label{lem:2.4}
Assume that $u_n \rightharpoonup u$ in $\HT$.
Then it follows that
\[
\lim_{n \to \infty} E_1(u_n) = E_1(u)
\quad \text{and} \quad
\lim_{n \to \infty} E_2(u_n)=E_2(u).
\]
\end{lemma}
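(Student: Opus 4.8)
The plan is to exploit the identities $E_1(u)=\tfrac14\int_{\R^3}S_1|u|^2\,dx$ and $E_2(u)=\tfrac12\int_{\R^3}S_2|u|^2\,dx$ from \ef{eq:2.2} and \ef{eq:2.5}, in which $S_1$ and $S_2$ are \emph{fixed} functions that belong to $L^6(\R^3)$: indeed, by the Hardy--Littlewood--Sobolev inequality (exactly the estimate underlying Lemma \ref{lem:2.2}), $\|S_1\|_6\le C\|\rho\|_{6/5}$ and $\|S_2\|_6\le C\|x\cdot\nabla\rho\|_{6/5}$, both finite by \ef{eq:1.5}. It therefore suffices to show $\int_{\R^3}S_1\big(|u_n|^2-|u|^2\big)\,dx\to0$; the argument for $S_2$ is word for word the same.

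First I would record that $(u_n)$ is bounded in $H^1(\R^3)$, hence bounded in $L^{12/5}(\R^3)$ by the Sobolev embedding, say $\|u_n\|_{12/5}\le M$ for all $n$ (and $\|u\|_{12/5}\le M$ as well). Then I would split at radius $R$: for every $R>0$,
\[
\Big|\int_{\R^3}S_1\big(|u_n|^2-|u|^2\big)\,dx\Big|
\le \Big|\int_{B_R(0)}S_1\big(|u_n|^2-|u|^2\big)\,dx\Big|
+\|S_1\|_{L^6(\R^3\setminus B_R(0))}\big(\|u_n\|_{12/5}^2+\|u\|_{12/5}^2\big),
\]
where the exterior piece is handled by H\"older with exponents $6$ and $6/5$ together with $\big\||u_n|^2\big\|_{6/5}=\|u_n\|_{12/5}^2$. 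Since $S_1\in L^6(\R^3)$, given $\ep>0$ I can fix $R$ so large that the second term is $\le 2M^2\|S_1\|_{L^6(\R^3\setminus B_R(0))}<\ep$ for all $n$. With $R$ now frozen, the compact embedding $H^1(B_R(0))\hookrightarrow L^{12/5}(B_R(0))$ (valid because $12/5<6=2^*$) and $u_n\rightharpoonup u$ in $H^1(B_R(0))$ give $u_n\to u$ strongly in $L^{12/5}(B_R(0))$; hence $\big\||u_n|^2-|u|^2\big\|_{L^{6/5}(B_R(0))}\le \big\||u_n|-|u|\big\|_{L^{12/5}}\big\||u_n|+|u|\big\|_{L^{12/5}}\to0$, and pairing against $S_1\in L^6$ forces the first term to tend to $0$ as $n\to\infty$. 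Letting $n\to\infty$ and then $\ep\to0$ proves the claim for $E_1$, and the same computation with $S_2$ in place of $S_1$ settles $E_2$.

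The only point that requires genuine attention is the tail estimate: one must know \emph{before} splitting that $S_1$ and $S_2$ lie in $L^6(\R^3)$, so that their $L^6$-mass outside a large ball is small uniformly in $n$; this is precisely where the global integrability of $\rho$ and $x\cdot\nabla\rho$ in $L^{6/5}(\R^3)$ enters. Everything else is the routine ``local compactness plus uniformly small tail'' scheme, so I do not anticipate any real difficulty.
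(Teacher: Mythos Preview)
Your argument is correct. You use the dual representation $E_1(u)=\tfrac14\int S_1|u|^2\,dx$ and split the \emph{fixed} function $S_1\in L^6(\R^3)$ into a large-ball part and a small tail, then appeal to the Rellich--Kondrachov compact embedding to get $u_n\to u$ in $L^{12/5}(B_R)$. The paper instead uses the symmetric representation $E_1(u)=-\tfrac14\int S_0(u)\rho\,dx$: it splits the fixed function $\rho\in L^{6/5}(\R^3)$ and shows $S_0(u_n)\to S_0(u)$ in $L^6_{loc}$, invoking Lemma~\ref{lem:2.3} for that local convergence. Both are the same ``local compactness plus uniformly small tail'' scheme, just with the roles of the two factors swapped. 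Your route is marginally more direct, since it avoids passing through convergence of $S_0(u_n)$ and uses only the standard compact Sobolev embedding for $u_n$ itself; the paper's route has the minor advantage of making explicit where the integrability of $\rho$ (rather than of $S_1$) is used, which matches the form of the smallness hypotheses elsewhere.
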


\begin{proof}
First we observe that since $u_n$ converges weakly in $\HT$,
there exists $C>0$ such that $\| u_n \| \le C$.
Moreover passing to a subsequence, we may assume that
$u_n \to u$ in $L^r_{loc}(\R^3)$ for $2 \le r <6$.
Then similarly as Lemma \ref{lem:2.3}, 
one can see that $S_0(u_n) \to S_0(u)$ in $L^6_{loc}(\R^3)$.
Furthermore since $\rho \in L^{\frac{6}{5}}(\R^3)$, 
for any $\ep >0$, there exists $R_{\ep}>0$ such that
\[
\left( \int_{|x| \ge R_{\ep}} | \rho(x)|^{\frac{6}{5}} \,dx \right)^{\frac{5}{6}} < \ep.
\]

Now from \ef{eq:2.2} and the H\"older inequality, it follows that
\begin{align*}
| E_1(u_n) - E_1(u) |
&\le \frac{1}{4} \intR \left| S_0(u_n) -S_0(u) \right| |\rho| \,dx \\
&= \frac{1}{4} \int_{|x| \le R_\ep} \left| S_0(u_n) -S_0(u) \right| |\rho| \,dx
+ \frac{1}{4} \int_{|x| \ge R_\ep} \left| S_0(u_n) -S_0(u) \right| |\rho| \,dx \\
&\le \frac{1}{4} \left( \int_{|x| \le R_\ep} 
|S_0(u_n)-S_0(u)|^6 \,dx \right)^{\frac{1}{6}} \| \rho \|_{\frac{6}{5}} \\
&\quad +\frac{1}{4} \left( \| S_0(u_n) \|_6 + \| S_0(u) \|_6 \right)
\left( \int_{|x| \ge R_{\ep}} | \rho(x)|^{\frac{6}{5}} \,dx \right)^{\frac{5}{6}}.
\end{align*}
Thus by Lemma \ref{lem:2.2} and Lemma \ref{lem:2.3}, we obtain
\[
\limsup_{n \to \infty} | E_1(u_n) - E_1(u) |
\le \frac{1}{4} \left( C + \| S_0(u) \|_6 \right) \ep.
\]
Since $\ep$ is arbitrary, we deduce that
$E_1(u_n) \to E_1(u)$ as $n \to \infty$.
$E_2(u)$ can be treated in a similar manner.
\end{proof}

\subsection{Scaling properties} \

In this subsection, we collect scaling properties of the nonlocal terms $D$ and $E_1$. 
For $a$, $b \in \R$ and $\la>0$, 
let us adapt the scaling $u_{\la} (x) := \la^a u( \la^b x)$. 
Then we have
\[
\begin{aligned}
S_0(u_{\la})(x)
&=\frac{1}{8 \pi } \intR \frac{|u_{\la}(y)|^2}{\left| x-y\right|} \,d y 
=\frac{\la^{2a}}{8 \pi } 
\intR \frac{|u(\la^b y)|^2}{\left| x- y\right|} \,dy  
= \frac{\la^{2a+b}}{8 \pi} \intR 
\frac{|u(\la^b y)|^2}{| \la^b x- \la^b y|} \,dy \\
&\stackrel{y=\la^{-b} z}{=} \frac{\la^{2a-2b}}{8\pi} \intR 
\frac{|u(z)|^2}{|\la^b x -z|} \,dz. 
\end{aligned}
\]
Thus one finds that
\begin{align}
S_0(u_{\la})(x) &= \la^{2 a -2 b} S_0(u) ( \la^{b} x), \nonumber \\
D(u_\la) &= \la^{4 a-5 b} D(u),  \label{eq:2.6} \\
E_1(u_\la) 
& =-\frac{1}{4} \intR S_0(u_{\la}) (x) \rho(x) \,d x 
=- \frac{\la^{2 a-2 b}}{4} \intR S_0(u)(\la^{b} x) \rho( x) \,dx \nonumber \\
&=- \frac{\la^{2 a-5 b}}{4} \intR S_0(u)(x) \rho (\la^{-b} x) \,d x. \label{eq:2.7}
\end{align}
By the H\"older inequality, it follows that
\begin{equation} \label{eq:2.8}
|E_1(u_{\la})| \le \frac{\la^{2a-5b}}{4} 
\| S_0(u)\|_6 \| \rho( \la^{-b} \cdot ) \|_{\frac{6}{5}}
\le C \la^{2a-\frac{5}{2}b} \| \rho \|_{\frac{6}{5}} \| u\|^2.
\end{equation}

\subsection{Nehari and Pohozaev identities} \ 

This subsection is devoted to establish the
Nehari identity and the Pohozaev identity associated with \ef{eq:1.1}.

\begin{lemma} \label{lem:2.5}
Let $u \in H^1(\R^3,\C)$ be a weak solution of \ef{eq:1.1}.
Then $u$ satisfies the Nehari identity $N(u)=0$ and 
the Pohozaev identity $P(u)=0$, where
\begin{align}
N(u)&= A(u) + \omega B(u) - C(u) + 4e^2 D(u) + 4e^2 E_1(u), \label{eq:2.9} \\
P(u)&= \frac{1}{2} A(u) + \frac{3\omega}{2} B(u) - \frac{3}{p+1} C(u)
+5e^2 D(u) + 10e^2 E_1(u) - e^2 E_2(u). \label{eq:2.10}
\end{align}

\end{lemma}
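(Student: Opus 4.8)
The plan is to obtain $N(u)=0$ and $P(u)=0$ by testing the first equation of \ef{eq:1.1} against $u$ and against $x\cdot\nabla\bar u$ respectively, after substituting the (decaying) solution $\phi=e\big(S_0(u)+S_1\big)$ of the Poisson equation. For the Nehari identity, since $u\in\HT$ is an admissible test function, pairing the first equation of \ef{eq:1.1} with $\bar u$, integrating over $\R^3$ and taking real parts gives $A(u)+\omega B(u)+e\intR\phi|u|^2\,dx-C(u)=0$; substituting $\phi=e\big(S_0(u)+S_1\big)$ and using the definitions of $D$ and $E_1$ (equivalently, $D'(u)u=4D(u)$ and $E_1'(u)u=2E_1(u)$ from Lemma \ref{lem:2.1}(i)) turns $e\intR\phi|u|^2\,dx$ into $4e^2D(u)+4e^2E_1(u)$, which is exactly \ef{eq:2.9}.

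For the Pohozaev identity I would multiply the first equation of \ef{eq:1.1} by $x\cdot\nabla\bar u$, integrate over $B_R(0)$, take real parts, and let $R\to\infty$ along a suitable sequence. First I would note, by elliptic regularity applied to $-\Delta u=|u|^{p-1}u-\omega u-e\phi u$ — whose right-hand side lies in $L^s_{loc}(\R^3)$ for some $s>\tfrac32$ since $\rho\in L^q_{loc}$ with $q>3$ by \ef{eq:1.5} (hence $\phi\in L^\infty_{loc}$) and $u\in L^6$ — that $u\in W^{2,s}_{loc}(\R^3)$, so the multiplication is licit, and that the standard decay of $H^1$-solutions kills the boundary integrals on $\partial B_R(0)$ along some $R_n\to\infty$. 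Using $\RE\big(u\,x\cdot\nabla\bar u\big)=\tfrac12\,x\cdot\nabla|u|^2$, the relation $\RE\int\nabla u\cdot\nabla\big(x\cdot\nabla\bar u\big)=-\tfrac12A(u)$ (in the limit), and integration by parts in $\R^3$, the terms $-\Delta u$, $\omega u$ and $-|u|^{p-1}u$ contribute $-\tfrac12A(u)$, $-\tfrac{3\omega}{2}B(u)$ and $+\tfrac{3}{p+1}C(u)$; for the nonlocal term, writing $e\phi u=e^2\big(S_0(u)+S_1\big)u$ and invoking Lemma \ref{lem:2.1}(ii), which yields $\int_{B_R(0)}S_0(u)\,u\,x\cdot\nabla\bar u\,dx\to-5D(u)$ and $\int_{B_R(0)}S_1\,u\,x\cdot\nabla\bar u\,dx\to-10E_1(u)+E_2(u)$, the contribution is $-5e^2D(u)-10e^2E_1(u)+e^2E_2(u)$. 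Collecting all contributions and rearranging (i.e. changing the overall sign) yields precisely $P(u)=0$ in the form \ef{eq:2.10}.

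The main obstacle is making this Pohozaev computation rigorous. Formally $P(u)=\tfrac{d}{dt}I\big(u(\cdot/t)\big)\big|_{t=1}=I'(u)[-x\cdot\nabla u]$, consistent with the scaling relations \ef{eq:2.6}--\ef{eq:2.7}, but since $x\cdot\nabla u$ need not belong to $\HT$ one cannot simply appeal to $I'(u)=0$; instead one must justify the truncated identities above through the regularity and decay of $u$ and a careful treatment of the boundary terms in the spirit of \cite{BL,Ca}, or, equivalently, insert a cut-off $\chi(x/R)$ into the test function and pass to the limit. The doping profile only enters through the nonlocal term, and the delicate limits involving $\rho$ and $x\cdot\nabla\rho$ have already been isolated in Lemma \ref{lem:2.1}(ii); the integrability assumptions in \ef{eq:1.5} ($\rho\in L^{6/5}$ and $x\cdot\nabla\rho\in L^{6/5}$) are exactly what make $E_1(u)$, $E_2(u)$ and those limits well defined, whereas $x\cdot(D^2\rho\,x)$ does not appear here and is only needed later for the energy inequality \ef{eq:1.9}.
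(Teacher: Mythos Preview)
Your proposal is correct and follows essentially the same route as the paper: the Nehari identity comes from $I'(u)u=0$ via Lemma~\ref{lem:2.1}(i), and the Pohozaev identity is obtained by multiplying the equation by $x\cdot\nabla\bar u$, integrating over $B_R(0)$, invoking Lemma~\ref{lem:2.1}(ii) for the nonlocal contributions, and passing to the limit $R\to\infty$ in the style of \cite{BL,Ca}. The only cosmetic difference is that the paper phrases the rigorous step as multiplying the two equations in \ef{eq:1.1} by $x\cdot\nabla\bar u$ and $e\,x\cdot\nabla S(u)$ respectively (and appeals to $C^{1,\alpha}_{loc}$ regularity via $S_1\in W^{2,q}_{loc}$), whereas you substitute $\phi=e(S_0(u)+S_1)$ from the start and cite Lemma~\ref{lem:2.1}(ii) directly; the two formulations are equivalent.
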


\begin{proof}
First by Lemma \ref{lem:2.1} (i), one has
\begin{align*}
0= I'(u)u 
&= \| \nabla u \|_2^2 + \omega \| u \|_2^2 - \| u \|_{p+1}^{p+1}
+e^2 D'(u)u + 2e^2 E_1'(u)u \\
&= A(u) + \omega B(u) - C(u) + 4e^2 D(u) + 4e^2 E_1(u).
\end{align*}
Next by Lemma \ref{lem:2.1} (ii), formally it holds that
\begin{align*}
0= I'(u) x \cdot \nabla u
&= - \frac{1}{2} \| \nabla u\|_2^2 - \frac{3\omega}{2} \| u \|_2^2
+\frac{3}{p+1} \| u \|_{p+1}^{p+1} 
+e^2 D'(u) x \cdot \nabla u + 2e^2 E_1'(u)x \cdot \nabla u \\
&= -\frac{1}{2}A(u) - \frac{3\omega}{2} B(u) + \frac{3}{p+1} C(u)
-5e^2 D(u) -10 e^2 E_1(u) + e^2 E_2(u).
\end{align*}

A rigorous proof can be done by establishing the $C^{1,\alpha}_{loc}$-regularity of 
any weak solution of \ef{eq:1.1} for some $\alpha \in (0,1)$. 
Note that since $\rho \in L^{q}_{loc}(\R^3)$ for some $q>3$, 
it follows by the elliptic regularity theory that $S_1 \in W^{2,q}_{loc}(\R^3)
\hookrightarrow C^{1,\alpha}_{loc}(\R^3)$. 
The smoothness of $u$ can be shown similarly by applying the elliptic regularity theory.
Then multiplying $x \cdot \nabla \bar{u}$ and $e x \cdot \nabla S(u)$ by \ef{eq:1.1} 
respectively, 
integrating them over $B_R(0)$ and passing to a limit $R \to \infty$,
we are able to prove \ef{eq:2.10} as in \cite{BL, Ca}.
\end{proof}

\section{Properties of Nehari-Pohozaev set}

In this section, we investigate fundamental properties 
of a Nehari-Pohozaev set, which we will use to obtain the existence 
of a ground state solution of \ef{eq:1.1}.

Now let us define
\[
J(u):= 2N(u) - P(u).
\]
From \ef{eq:2.9} and \ef{eq:2.10}, it holds that
\begin{equation} \label{eq:3.1}
J(u) =\frac{3}{2} A(u)+\frac{\omega}{2} B(u)
-\frac{2p-1}{p+1} C(u) +3 e^2 D(u)-2 e^2 E_1(u)+e^2 E_2(u).
\end{equation}
We also denote the Nehari-Pohozaev set by $\CM$:
\[
\CM =\left\{ u \in \HT \setminus \{ 0 \} \mid J(u)=0 \right\} .
\]
By Lemma \ref{lem:2.5}, one knows that 
any weak solution of \ef{eq:1.1} belongs to $\CM$.
We will show later that a minimizer $I|_{\CM}$ is 
actually a ground state solution of \ef{eq:1.1}.

For this purpose, we begin with the following lemma.
Hereafter we let
\[
u_\la(x) := \la^2 u(\la x) \quad \text{for} \ u \in \HT \setminus \{0\}
\ \text{and} \ \la>0.
\]

\begin{lemma} \label{lem:3.1}
Suppose that $2<p<5$.
For any $u \in \HT \setminus \{ 0 \}$, 
there exists $\la_u>0$ such that $u_{\la_u} \in \CM$.
Especially the set $\CM$ is non-empty.
\end{lemma}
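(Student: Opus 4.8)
The plan is to fix $u \in \HT \setminus \{0\}$, examine the fibering map $g(\la) := J(u_\la)$ on $(0,\infty)$, and produce a zero of $g$ via the intermediate value theorem. First I would write $g$ out explicitly. Since $A(u_\la) = \la^3 A(u)$, $B(u_\la) = \la B(u)$, $C(u_\la) = \la^{2p-1} C(u)$, and $D(u_\la) = \la^3 D(u)$ by \ef{eq:2.6}, the identity \ef{eq:3.1} becomes
\[
g(\la) = \frac{3}{2}\la^3 A(u) + \frac{\omega}{2}\la B(u) - \frac{2p-1}{p+1}\la^{2p-1} C(u) + 3e^2 \la^3 D(u) - 2e^2 E_1(u_\la) + e^2 E_2(u_\la).
\]
I would then record that $g$ is continuous on $(0,\infty)$: the terms polynomial in $\la$ are obviously continuous, while $\la \mapsto E_1(u_\la)$ and $\la \mapsto E_2(u_\la)$ are continuous because, by \ef{eq:2.7} and its analogue for $E_2$, they are (up to positive powers of $\la$) integrals of the fixed function $S_0(u) \in L^6(\R^3)$ (Lemma \ref{lem:2.2}) against the dilated densities $\rho(\cdot/\la)$ and $(x\cdot\nabla\rho)(\cdot/\la)$, and $\la \mapsto \rho(\cdot/\la)$, $\la \mapsto (x\cdot\nabla\rho)(\cdot/\la)$ are continuous into $L^{\frac{6}{5}}(\R^3)$.

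The second step is to estimate the two non-homogeneous terms. Estimate \ef{eq:2.8} specialized to $u_\la(x) = \la^2 u(\la x)$, i.e. $a=2$, $b=1$, gives $|E_1(u_\la)| \le C\la^{3/2}\|\rho\|_{\frac{6}{5}}\|u\|^2$, and the same kind of computation — using $S_0(u_\la)(x) = \la^2 S_0(u)(\la x)$, the substitution $y = \la x$, and the H\"older inequality — yields $|E_2(u_\la)| \le C\la^{3/2}\|x\cdot\nabla\rho\|_{\frac{6}{5}}\|u\|^2$. The only feature needed is that both corrections are $O(\la^{3/2})$ as $\la \to 0^+$ and as $\la \to \infty$, an exponent strictly between $1$ and $3$ and strictly below $2p-1$ (here $2<p<5$ enters, in fact only $p>2$).

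Now the sign analysis is immediate. As $\la \to 0^+$, the coefficient of the linear term $\frac{\omega}{2}\la B(u)$ is positive because $\omega > 0$ and $B(u) = \|u\|_2^2 > 0$ (recall $u \ne 0$), and this term dominates the $O(\la^{3/2})$ and $O(\la^3)$ contributions, so $g(\la) > 0$ for all sufficiently small $\la$. As $\la \to \infty$, since $2p-1 > 3 > \frac{3}{2}$ the term $-\frac{2p-1}{p+1}\la^{2p-1} C(u)$, with $C(u) = \|u\|_{p+1}^{p+1} > 0$, dominates everything else, so $g(\la) \to -\infty$. By continuity of $g$ there exists $\la_u \in (0,\infty)$ with $J(u_{\la_u}) = g(\la_u) = 0$, and since $u \ne 0$ forces $u_{\la_u} \ne 0$, this gives $u_{\la_u} \in \CM$, so in particular $\CM \ne \emptyset$.

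I do not anticipate a genuine obstruction in this lemma; the only subtlety is that the doping contributions $E_1(u_\la)$ and $E_2(u_\la)$ — which break the exact scaling invariance available when $\rho \equiv 0$ — must stay negligible against the linear term near $\la = 0$ and against the $C$-term near $\la = \infty$, and the $\la^{3/2}$ bounds above are precisely what guarantees this. (The genuinely delicate point is the \emph{uniqueness} of $\la_u$, which requires the smallness of $\rho$ and is handled in the subsequent lemmas.) An equivalent alternative is to note that $J(u_\la) = \la\,\frac{d}{d\la} I(u_\la)$, so that $\la_u$ can be obtained as an interior maximum point of the fibering map $\la \mapsto I(u_\la)$, which vanishes as $\la \to 0^+$, is positive for small $\la$, and tends to $-\infty$ as $\la \to \infty$.
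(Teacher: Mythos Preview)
Your proof is correct and follows essentially the same route as the paper: both arguments compute the scaling of each term in the fibering map, control the doping contributions by the $O(\la^{3/2})$ bound from \ef{eq:2.8}, and use that the $\frac{\omega}{2}\la B(u)$ term dominates near $\la=0$ while the $-\la^{2p-1}C(u)$ term dominates at infinity. The only cosmetic difference is that the paper works with $f(\la)=I(u_\la)$, finds an interior maximum, and then checks $\la f'(\la)=J(u_\la)$, whereas you apply the intermediate value theorem directly to $g(\la)=J(u_\la)$; you yourself note this equivalence in your last paragraph.
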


\begin{proof}
Taking $a=2$ and $b=1$, we have from \ef{eq:2.3}, \ef{eq:2.6} and \ef{eq:2.7} that
\begin{equation} \label{eq:3.2}
f(\la):= I(u_\la)
=\frac{\la^3}{2} A(u) +\frac{\omega \la}{2} B(u)
-\frac{\la^{2p-1}}{p+1} C(u) +e^2 \la^3 D(u)
-\frac{e^2 \la^{-1}}{2} \int_{\mathbb{R}^3} S_0(u) \rho (\la^{-1} x) \,dx.
\end{equation}
By \ef{eq:2.4} and \ef{eq:2.8}, it follows that
\[
f(\la) \ge \frac{\omega \la}{2} B(u) -\frac{\la^{2p-1}}{p+1} C(u)
-C e^2 \la^{\frac{3}{2}} \| \rho \|_{\frac{6}{5}} \| u\|^2,
\]
from which we deduce that $f(\la)>0$ for small $\la>0$.
On the other hand since $2p-1 >3$, one also finds that
$f(\la) \to - \infty$ as $\la \to \infty$.
This implies that there exists a maximum point $\la=\la_u>0$ so that $f'(\la_u)=0$.

By a direct computation, $0= \la_u f'(\la_u)$ shows that
\[
\begin{aligned}
0 &= \frac{3\la_u^3}{2} A(u) +\frac{\omega \la_u}{2} B(u)
-\frac{2 p-1}{p+1} \la_u^{2 p-1} C(u) + 3e^2 \la_u^3 D(u) \\
&\quad +\frac{e^2 \la_u^{-1}}{2} \int_{\mathbb{R}^3} S_0(u) \rho(\la_u^{-1} x) \,d x
+\frac{e^2 \la_u^{-1}}{2} \int_{\mathbb{R}^3} S_0(u) 
\la_u^{-1}x \cdot \nabla \rho (\la_u^{-1} x) \,d x.
\end{aligned}
\]
Using \ef{eq:2.6} and \ef{eq:2.7} again, 
we find that $0= J(u_{\la_u})$, which ends the proof.
\end{proof}

\begin{lemma} \label{lem:3.2}
Suppose that $2<p<5$.
There exist $\rho_0$, $\delta_0$ and $\alpha_0>0$
independent of $e$, $\rho$ such that if
\[
e^2 \left( \| \rho \|_{\frac{6}{5}}
+\| x \cdot \nabla \rho \|_{\frac{6}{5}} \right) \le \rho_0,
\]
then it holds that
\[
J(u) \ge \alpha_0 \| u \|^2 \quad \text{for any} \ \
u \in \HT \ \ \text{with} \ \ 0 < \| u \| < \delta_0.
\]

Furthermore if $J(u) \le 0$, 
there exists $\delta_1>0$ independent of $e$, $\rho$ such that 
\begin{equation} \label{eq:3.3}
\| u \|_{p+1} \ge \delta_1.
\end{equation}

\end{lemma}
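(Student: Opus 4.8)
The plan is to exploit the explicit formula \ef{eq:3.1} for $J(u)$ together with the estimates in Lemma \ref{lem:2.2}. First I would split $J$ into its "good" quadratic part $\frac{3}{2}A(u)+\frac{\omega}{2}B(u)$ and the remainder. The term $3e^2D(u)$ is nonnegative by \ef{eq:2.4}, so it only helps. The dangerous terms are $-\frac{2p-1}{p+1}C(u)$, $-2e^2E_1(u)$ and $e^2E_2(u)$. For the nonlocal pieces I would use $|E_1(u)|\le C\|\rho\|_{6/5}\|u\|^2$ and $|E_2(u)|\le C\|x\cdot\nabla\rho\|_{6/5}\|u\|^2$ from Lemma \ref{lem:2.2}; choosing $\rho_0$ small so that $e^2\big(2\|\rho\|_{6/5}+\|x\cdot\nabla\rho\|_{6/5}\big)\le \rho_0$ forces
\[
|{-2e^2E_1(u)+e^2E_2(u)}| \le C\rho_0\|u\|^2 \le \tfrac{1}{4}\min\{3/2,\omega/2\}\|u\|^2
\]
after fixing $\rho_0$ small enough (independently of $e,\rho$, since the constant $C$ is universal). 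For the superlinear term, since $2<p<5$ the Sobolev embedding gives $C(u)=\|u\|_{p+1}^{p+1}\le C\|u\|^{p+1}$, and because $p+1>2$ this is $o(\|u\|^2)$ as $\|u\|\to0$; hence there is $\delta_0>0$ so that $\frac{2p-1}{p+1}C(u)\le \frac{1}{4}\min\{3/2,\omega/2\}\|u\|^2$ whenever $\|u\|<\delta_0$. Combining, $J(u)\ge \frac{1}{2}\min\{3/2,\omega/2\}\|u\|^2=:\alpha_0\|u\|^2$ on $0<\|u\|<\delta_0$, which is the first assertion.

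For the second part, suppose $J(u)\le 0$. From \ef{eq:3.1} and $D(u)\ge0$,
\[
\frac{3}{2}A(u)+\frac{\omega}{2}B(u) \le \frac{2p-1}{p+1}C(u) + 2e^2|E_1(u)| + e^2|E_2(u)|
\le \frac{2p-1}{p+1}C(u) + C\rho_0\|u\|^2.
\]
Choosing $\rho_0$ smaller if necessary so that $C\rho_0\le \frac14\min\{3/2,\omega/2\}$, the last term is absorbed into the left side, yielding $\|u\|^2\le C'C(u)=C'\|u\|_{p+1}^{p+1}$ for a constant $C'$ independent of $e,\rho$. On the other hand $C(u)=\|u\|_{p+1}^{p+1}\le C_S^{p+1}\|u\|^{p+1}$ by Sobolev, so $\|u\|_{p+1}^{p+1}\le C'C_S^{p+1}\|u\|_{p+1}^{(p+1)\cdot\frac{p+1}{2}}$... more cleanly: from $\|u\|^2\le C'\|u\|_{p+1}^{p+1}$ and $\|u\|_{p+1}\le C_S\|u\|$ we get $\|u\|_{p+1}^2\le C_S^2\|u\|^2\le C_S^2C'\|u\|_{p+1}^{p+1}$, hence $1\le C_S^2C'\|u\|_{p+1}^{p-1}$ (using $u\ne0$), giving $\|u\|_{p+1}\ge (C_S^2C')^{-1/(p-1)}=:\delta_1>0$, a constant depending only on $p$ and the universal constants, hence independent of $e,\rho$.

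The only mild subtlety — and the point to be careful about — is making sure all the thresholds $\rho_0,\delta_0,\delta_1,\alpha_0$ are genuinely independent of $e$ and $\rho$: this works precisely because the bounds in Lemma \ref{lem:2.2} carry the factors $\|\rho\|_{6/5}$, $\|x\cdot\nabla\rho\|_{6/5}$ explicitly, so the smallness always enters through the single scalar $e^2(\|\rho\|_{6/5}+\|x\cdot\nabla\rho\|_{6/5})\le\rho_0$, and the remaining constants come only from Sobolev embeddings and the fixed exponent $p$. I would take $\rho_0$ to be the minimum of the finitely many smallness requirements above. No genuine obstacle is expected; this is a coercivity-near-the-origin argument.
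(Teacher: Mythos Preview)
Your plan is correct and follows essentially the same approach as the paper's proof: both exploit \ef{eq:3.1}, discard $3e^2D(u)\ge 0$, control $E_1,E_2$ by Lemma~\ref{lem:2.2} so that the nonlocal error is $\le C\rho_0\|u\|^2$ and can be absorbed, and then use the Sobolev embedding $\|u\|_{p+1}\le C_S\|u\|$ to close. The only cosmetic difference is that in the second part the paper uses the interpolated bound $\|\nabla S_0(u)\|_2\le C(\|u\|_2^2+\|u\|_{p+1}^2)$ from Lemma~\ref{lem:2.2} (producing an extra $\|u\|_{p+1}^2$ term to be absorbed), whereas you use the simpler bound $\le C\|u\|^2$ throughout; both routes lead to the same inequality $\|u\|_{p+1}^{p-1}\ge c>0$.
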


\begin{proof}
By the Sobolev inequality, Lemma \ref{lem:2.2} and from \ef{eq:3.1}, one has
\[
J(u) \ge \frac{\min \{3, \omega\}}{2} \| u \|^2  - C_1 \| u \|^{p+1}
-C_2 e^2 \left( \| \rho \|_{\frac{6}{5}} 
+ \| x \cdot \nabla \rho\|_{\frac{6}{5}} \right) \| u\|^2,
\]
where $C_1$, $C_2>0$ are independent of $e$ and $\rho$.
Thus if
\[
e^2 \left( \| \rho \|_{\frac{6}{5}} 
+ \| x \cdot \nabla \rho\|_{\frac{6}{5}} \right)
\le \frac{1}{4 C_2} \min \{3, \omega \},
\]
it follows that 
\[
J(u) \ge \frac{ \min \{ 3, \omega \} }{4} \| u\|^2- C_1\| u\|^{p+1}.
\]
Putting $\delta_0= \left( \frac{\min \{3, \omega \}}{8 C_1}\right)^{\frac{1}{p-1}}$,
we obtain
\[
J(u) \ge \frac{ \min \{3, \omega \}}{8} \| u\|^2 
\quad \text{for} \ 0< \| u \| < \delta_0.
\]

Next suppose that $J(u) \le 0$. 
Then by Lemma \ref{lem:2.2}, one finds that
\[
\begin{aligned}
0 \ge J(u) &\ge \frac{3}{2} \|\nabla u \|_2^2
+\frac{\omega}{2}\| u \|_2^2 -\frac{2p-1}{p+1}\| u\|_{p+1}^{p+1} 
-C_3 e^2 \left( \|\rho\|_{\frac{6}{5}} +\|x \cdot \nabla \rho\|_{\frac{6}{5}}\right)
\left( \| u\|_2^2+ \| u \|_{p+1}^2\right) \\
&\ge \frac{\min \{3, \omega \}}{2} \| u\|^2 -\frac{2 p-1}{p+1}\| u\|_{p+1}^{p+1} 
-C_3 e^2 \left( \| \rho \|_{\frac{6}{5}}+\|x \cdot \nabla \rho \|_{\frac{6}{5}}\right)
\left(\| u \|^2+\| u\|_{p+1}^2\right).
\end{aligned}
\]
Choosing 
\[
e^2 \left(\| \rho\|_{\frac{6}{5}}+\|x \cdot \nabla \rho \|_{\frac{6}{5}}\right) 
\leq \frac{1}{4 C_3} \min \{3, \omega \},
\]
and using the Sobolev inequality, we obtain
\begin{align*}
0 \ge J(u) 
&\ge \frac{\min \{ 3, \omega \}}{4} \|u \|^2
- \frac{2p-1}{p+1} \| u \|_{p+1}^{p+1} 
-C_3 e^2 \left( \| \rho \|_{\frac{6}{5}}+\|x \cdot \nabla \rho \|_{\frac{6}{5}}\right)
\| u \|_{p+1}^2 \\
&\ge \left\{ \frac{C_4 \min \{3, \omega \}}{4} 
-C_3 e^2 \left( \| \rho \|_{\frac{6}{5}}+\|x \cdot \nabla \rho \|_{\frac{6}{5}}\right) 
\right\} \| u \|_{p+1}^2 - \frac{2p-1}{p+1} \| u\|_{p+1}^{p+1}.
\end{align*}
Thus if 
\[
e^2 \left( \| \rho \|_{\frac{6}{5}}+\|x \cdot \nabla \rho \|_{\frac{6}{5}}\right) 
\le \frac{C_4}{ 8 C_3} \min \{ 3, \omega \},
\]
it holds that
\[
0 \ge J(u) \ge 
\left( \frac{ C_4 \min \{3, \omega \}}{8} 
-\frac{2p-1}{p+1}\| u\|_{p+1}^{p-1} \right) \| u\|_{p+1}^2,
\]
which implies that
\[
\| u \|_{p+1} \ge \left( \frac{(p+1) C_4 \min \{ 3, \omega \}}{8(2p-1)} 
\right)^{\frac{1}{p-1}}.
\]
This completes the proof.
\end{proof}

Now by Lemma \ref{lem:3.2}, we can define
\begin{equation} \label{eq:3.4}
\sigma := \inf_{u \in \CM} I(u).
\end{equation}

\begin{lemma} \label{lem:3.3}
Suppose that $2<p<5$. 
There exist $\rho_0$,
$\alpha_1>0$ independent of $e$, $\rho$ such that if
\[
e^2\left(\| \rho \|_{\frac{6}{5}}+\| x \cdot \nabla \rho \|_{\frac{6}{5}}\right) 
\le \rho_0,
\]
then it holds that 
\[
I(u) \ge \alpha_1 \| u \|^2 \quad \text{for any} \ u \in \CM.
\]
Especially $\sigma$ is positive.
\end{lemma}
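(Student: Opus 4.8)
The plan is to bound $I(u)$ from below on $\CM$ by expressing $I(u)$ as a combination of $J(u)=0$ and manifestly nonnegative (or controllable) quantities, mirroring the structure used in Lemma~\ref{lem:3.2}. Concretely, I would form the linear combination $I(u) - \theta J(u)$ for a suitable constant $\theta>0$ and choose $\theta$ so that the coefficient of the sign-indefinite term $C(u)=\|u\|_{p+1}^{p+1}$ vanishes. From \ef{eq:2.3} and \ef{eq:3.1}, the $C(u)$-coefficient in $I(u)-\theta J(u)$ is $-\frac{1}{p+1}+\theta\frac{2p-1}{p+1}$, so I would take $\theta = \frac{1}{2p-1}$, which is in $(0,1)$ since $p>1$. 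With this choice, for $u \in \CM$ we have $I(u) = I(u) - \theta J(u)$, and the right-hand side is a linear combination of $A(u)$, $B(u)$, $D(u)$, $E_1(u)$, $E_2(u)$ with $A$, $B$, $D \ge 0$ by \ef{eq:2.4}.

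The key computation is to check the coefficients of $A(u)$ and $B(u)$ are strictly positive: the coefficient of $A(u)$ is $\frac{1}{2}-\frac{3}{2}\theta = \frac{1}{2}\cdot\frac{2p-4}{2p-1}>0$ since $p>2$, and the coefficient of $B(u)$ is $\frac{\omega}{2}(1-\theta)>0$. The coefficient of $D(u)$ is $e^2(1-3\theta)$, which may be negative; but then I would instead keep $D(u)\ge 0$ only when its coefficient is nonnegative, and otherwise simply use the crude bound that the $D$, $E_1$, $E_2$ terms together are controlled. More cleanly: after extracting the positive multiples of $A(u)$ and $B(u)$, I bound the remaining terms $e^2(1-3\theta)D(u) + 2e^2(1-2\theta)E_1(u) - e^2\theta E_2(u)$ from below. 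Using Lemma~\ref{lem:2.2}, $|E_1(u)|, |E_2(u)| \le C e^2(\|\rho\|_{\frac{6}{5}}+\|x\cdot\nabla\rho\|_{\frac{6}{5}})\|u\|^2$, and $D(u)\ge 0$ handles the $D$-term when $1-3\theta\ge 0$; when $1-3\theta<0$, I use $0 \le D(u) \le C\|u\|^4$ together with the fact (from Lemma~\ref{lem:3.2} and the discussion there) that on $\CM$ one may restrict attention to $\|u\|$ bounded below — but in fact the cleanest route is: if $3\theta>1$ I instead absorb $e^2|1-3\theta|D(u)$ into a smallness hypothesis is not possible since $D$ is quartic, so I would rather regroup to keep the full nonnegative $e^2 D(u)$ and only sacrifice a portion of $A(u)$; for $2<p<5$ one checks $1-3\theta = 1 - \frac{3}{2p-1} = \frac{2p-4}{2p-1}\ge 0$, so in fact the $D(u)$-coefficient is already nonnegative and no such trick is needed.

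Thus the estimate reduces to: for $u\in\CM$,
\[
I(u) = I(u)-\theta J(u) \ge \frac{p-2}{2p-1}A(u) + \frac{(p-1)\omega}{2p-1}B(u) - C e^2\big(\|\rho\|_{\frac{6}{5}}+\|x\cdot\nabla\rho\|_{\frac{6}{5}}\big)\|u\|^2,
\]
using $D(u)\ge 0$, $|E_1(u)|+|E_2(u)| \le C(\|\rho\|_{\frac{6}{5}}+\|x\cdot\nabla\rho\|_{\frac{6}{5}})\|u\|^2$, and the fact that $\theta<1$. Writing $\kappa := \min\{\frac{p-2}{2p-1}, \frac{(p-1)\omega}{2p-1}\}>0$, which depends only on $p$ and $\omega$ (not on $e$, $\rho$), we get $I(u) \ge \kappa\|u\|^2 - Ce^2(\|\rho\|_{\frac{6}{5}}+\|x\cdot\nabla\rho\|_{\frac{6}{5}})\|u\|^2$. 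Choosing $\rho_0$ so that $e^2(\|\rho\|_{\frac{6}{5}}+\|x\cdot\nabla\rho\|_{\frac{6}{5}})\le\rho_0$ forces $Ce^2(\cdots)\le\kappa/2$, we conclude $I(u)\ge \frac{\kappa}{2}\|u\|^2$ with $\alpha_1=\kappa/2$.

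Finally, positivity of $\sigma$: by Lemma~\ref{lem:3.2}, every $u\in\CM$ satisfies $J(u)=0\le 0$, hence \ef{eq:3.3} gives $\|u\|_{p+1}\ge\delta_1$, so by the Sobolev embedding $\|u\|\ge c_0\delta_1$ for some fixed $c_0>0$; combined with $I(u)\ge\alpha_1\|u\|^2$ this yields $I(u)\ge\alpha_1 c_0^2\delta_1^2>0$ uniformly on $\CM$, whence $\sigma\ge\alpha_1 c_0^2\delta_1^2>0$. The only mild subtlety — and the step I expect to need the most care — is making sure that all the smallness thresholds on $e^2(\|\rho\|_{\frac{6}{5}}+\|x\cdot\nabla\rho\|_{\frac{6}{5}})$ accumulated here and in Lemma~\ref{lem:3.2} can be taken simultaneously as a single $\rho_0$ independent of $e$ and $\rho$; this is routine since there are finitely many constraints, each of the form $e^2(\cdots)\le$ (constant depending only on $p,\omega$).
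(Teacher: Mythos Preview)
Your approach is essentially identical to the paper's: the paper forms the combination $(2p-1)I(u) - J(u)$ (written as ``$(2p-1)I(u) = (p-2)A + (p-1)\omega B + 2(p-2)e^2 D + 4pe^2 E_1 - e^2 E_2$'' after using $J(u)=0$), which is exactly your $I(u)-\theta J(u)$ with $\theta=\frac{1}{2p-1}$ multiplied through by $2p-1$. One harmless slip: the $E_1$-coefficient in $I-\theta J$ is $2e^2(1+\theta)$, not $2e^2(1-2\theta)$ (the sign of $E_1$ in $J$ is negative), but since you only use $|E_1(u)|\le C\|\rho\|_{6/5}\|u\|^2$ this does not affect the argument.
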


\begin{proof}
From \ef{eq:2.3} and \ef{eq:3.1}, one has
\[
\begin{aligned}
I(u) &= \frac{1}{2} A(u)+\frac{\omega}{2} B(u) -\frac{1}{p+1} C(u)
+e^2 D(u)+2 e^2 E_1(u), \\
0 &= \frac{3}{2} A(u)+\frac{\omega}{2} B(u) -\frac{2p-1}{p+1} C(u)
+3 e^2 D(u)-2 e^2 E_1(u) +e^2 E_2(u),
\end{aligned}
\]
from which we deduce that 
\[
(2 p-1) I(u) = (p-2) A(u) +(p-1) \omega B(u) +2(p-2) e^2 D(u) 
+4 p e^2 E_1(u)-e^2 E_2(u).
\]
By Lemma \ref{lem:2.2} and from \ef{eq:2.4}, it follows that
\[
(2p-1)I(u) 
\ge \min \{ p-2, (p-1)\omega \} \| u \|^2
-C_1 e^2 \left(\| \rho \|_{\frac{6}{5}}+\| x \cdot \nabla \rho \|_{\frac{6}{5}}\right)
\| u \|^2,
\]
where $C_1>0$ is independent of $e$ and $\rho$.
Thus if 
\[
e^2\left(\| \rho \|_{\frac{6}{5}}+\| x \cdot  \nabla \rho \|_{\frac{6}{5}}\right) 
\le \frac{1}{2 C_1} \min \{ p-2, (p-1) \omega \},
\]
then we have
\[
(2p-1)I(u) \ge \frac{\min \{ p-2, (p-1) \omega \}}{2 C_1} \| u \|^2,
\]
from which we conclude.
\end{proof}

\begin{lemma} \label{lem:3.4}
Suppose that $2<p<5$. 
There exists $\rho_0>0$ such that if 
\[
e^2 \left(\|\rho\|_{\frac{6}{5}}+\|x \cdot \nabla \rho \|_{\frac{6}{5}}
+\| x \cdot (D^2 \rho x ) \|_{\frac{6}{5}}\right) \le \rho_0,
\]
then $\CM$ is a co-dimension one manifold.
\end{lemma}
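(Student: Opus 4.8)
The statement to prove is that, under the smallness hypothesis, $\CM = J^{-1}(0)\cap(\HT\setminus\{0\})$ is a $C^1$ submanifold of codimension one. Since the nonlocal functionals $D,E_1,E_2$ occurring in $J$ are of class $C^1$ on $\HT$ by Lemma~\ref{lem:2.1}(i) (the local terms $A,B,C$ being standard), $J\in C^1(\HT,\R)$. By the implicit function theorem it therefore suffices to check that $0$ is a regular value of $J$ on $\HT\setminus\{0\}$, i.e. that $J'(u)\neq 0$ for every $u\in\CM$. The plan is to exhibit a direction along which $J'(u)$ does not vanish, namely the scaling field generated by $\la\mapsto u_\la$.

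The starting point is the identity $J(u_\la)=\la f'(\la)$ for all $\la>0$, where $f(\la):=I(u_\la)$ is given by \ef{eq:3.2}; this is exactly the identity underlying the computation in Lemma~\ref{lem:3.1}. Hence for $u\in\CM$ we have $f'(1)=J(u)=0$, so that $\frac{d}{d\la}J(u_\la)\big|_{\la=1}=f''(1)$. I would then compute $f''(1)$ from \ef{eq:3.2} using the scaling relations \ef{eq:2.6}--\ef{eq:2.8}, together with the two derivative identities for the $\rho$-dependent term obtained by differentiating \ef{eq:2.7}: $\la\frac{d}{d\la}E_1(u_\la)=-E_1(u_\la)+\tfrac12 E_2(u_\la)$ and $\frac{d}{d\la}E_2(u_\la)\big|_{\la=1}=-2E_2(u)-E_3(u)$. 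The second of these is precisely where the new hypothesis enters: differentiating the change of variables in \ef{eq:2.7} a second time produces the factor $x\cdot(D^2\rho(x)x)$, hence the quantity $E_3(u)$, whose control requires the $L^{6/5}$-integrability of $x\cdot(D^2\rho\,x)$ in \ef{eq:1.5}. A direct bookkeeping then gives $f''(1)=3A(u)-\tfrac{(2p-1)(2p-2)}{p+1}C(u)+6e^2D(u)+4e^2E_1(u)-4e^2E_2(u)-e^2E_3(u)$, and substituting the constraint $J(u)=0$ in the form $\tfrac{2p-1}{p+1}C(u)=\tfrac32A(u)+\tfrac{\omega}{2}B(u)+3e^2D(u)-2e^2E_1(u)+e^2E_2(u)$ to eliminate $C(u)$ yields
\begin{align*}
\frac{d}{d\la}J(u_\la)\Big|_{\la=1}
= 3(2-p)A(u)-(p-1)\omega B(u)+6(2-p)e^2 D(u)+4p\,e^2E_1(u)-2(p+1)e^2E_2(u)-e^2E_3(u).
\end{align*}

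For the sign, since $2<p<5$ the first two terms are bounded above by $-\min\{3(p-2),(p-1)\omega\}\,\|u\|^2$, and $6(2-p)e^2D(u)\le 0$ because $D(u)\ge 0$ by \ef{eq:2.4}; the key point is that one only needs to \emph{discard} the $D$-term rather than estimate it, so no a priori bound on $\|u\|$ is needed. By Lemma~\ref{lem:2.2}, the remaining three terms are bounded in modulus by $C\,e^2\big(\|\rho\|_{\frac{6}{5}}+\|x\cdot\nabla\rho\|_{\frac{6}{5}}+\|x\cdot(D^2\rho\,x)\|_{\frac{6}{5}}\big)\|u\|^2$. Choosing $\rho_0$ small (depending only on $p$, $\omega$ and the Sobolev constant) one gets $\frac{d}{d\la}J(u_\la)\big|_{\la=1}\le-\tfrac12\min\{3(p-2),(p-1)\omega\}\|u\|^2<0$, strictly since $u\neq 0$. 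As $\frac{d}{d\la}J(u_\la)\big|_{\la=1}=J'(u)(2u+x\cdot\nabla u)$, this forces $J'(u)\neq 0$ (the scaling field being an admissible test element because a would-be $u\in\CM$ with $J'(u)=0$ is a weak, hence classical, solution of the Euler--Lagrange equation of $J$ by the regularity bootstrap in the proof of Lemma~\ref{lem:2.5}, so the scaling identities above hold rigorously and give $f''(1)=0$, a contradiction). Therefore $0$ is a regular value and $\CM$ is a $C^1$ manifold of codimension one.

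I expect the main obstacle to be the bookkeeping in the second paragraph: carrying two $\la$-derivatives through the change of variables in \ef{eq:2.7}, which is exactly what generates $E_3$ and the new assumption; the sign analysis afterwards is then immediate, precisely because the nonnegativity $D(u)\ge 0$ lets one drop the only term that is not $O(\|u\|^2)$, so that no uniform bound on $\|u\|$ over $\CM$ is required.
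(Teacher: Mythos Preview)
Your proof is correct and takes a genuinely different route from the paper's. The paper also argues by contradiction, but after assuming $J'(u)=0$ it records four scalar identities --- the values of $I$ and of $J=0$ on $\CM$, the Nehari-type identity $J'(u)u=0$ (eq.~\ef{eq:3.6}), and the Pohozaev identity $Q(u)=0$ for the Euler--Lagrange equation of $J$ (eq.~\ef{eq:3.7}) --- as a $4\times4$ linear system in $A,B,C,D$ (eq.~\ef{eq:3.8}), solves it with Mathematica to express $D(u)$ in terms of $I(u)$ and $E_1,E_2,E_3$, and then invokes Lemma~\ref{lem:3.3} ($I(u)\ge\alpha_1\|u\|^2$ on $\CM$) to force $D(u)<0$. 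Your approach bypasses both the linear algebra and Lemma~\ref{lem:3.3}: you compute $f''(1)$ directly from the scaling formula \ef{eq:3.2}, substitute only the constraint $J(u)=0$ to eliminate $C$, and then discard the term $6(2-p)e^2D(u)\le 0$ by sign --- which is exactly why no lower bound on $I$ is needed. The link ``$J'(u)=0\Rightarrow f''(1)=0$'' rests on the same ingredients the paper uses (indeed $2\cdot[\text{\ef{eq:3.6}}]-Q(u)$ equals your $\frac{d}{d\la}J(u_\la)\big|_{\la=1}$ term by term), so the regularity bootstrap you cite is the correct justification. Your argument is shorter and more conceptual; the paper's system approach, on the other hand, is reused almost verbatim in Lemma~\ref{lem:3.5} to show that the Lagrange multiplier vanishes, so there is some economy in having set it up once.
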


\begin{proof}
By Lemma \ref{lem:3.2}, it suffices to show that $J'(u) \ne 0$ if $u \in \CM$.
Suppose by contradiction that $u \in \CM$ satisfies $J'(u)=0$.
Then by Lemma \ref{lem:2.1} (i), one finds that 
$u$ is a weak solution of the problem:
\begin{equation} \label{eq:3.5} 
-3 \Delta u + \omega u - (2 p-1) |u|^{p-1} u
+3 e^2 S_0(u) u -e^2 S_1 u+ e^2 S_2 u=0.
\end{equation}
Especially we have
\begin{equation} \label{eq:3.6}
0= J^{\prime}(u) u 
=3 A(u)+ \omega B(u) -(2p-1) C(u)+12 e^2 D(u) 
-4 e^2 E_1(u)+2 e^2 E_2(u).
\end{equation}
Furthermore multiplying $x \cdot \nabla u$ by \ef{eq:3.5},
using Lemma \ref{lem:2.1} (ii) and arguing as in Lemma \ref{lem:2.5},
one also finds that
\begin{equation} \label{eq:3.7}
\begin{aligned}
0=Q(u) &:= \frac{3}{2} A(u) +\frac{3\omega}{2} B(u)-\frac{3(2p-1)}{p+1} C(u)
+15 e^2 D(u) \\
&\quad -10 e^2 E_1(u)+7 e^2 E_2(u)+e^2 E_3(u).
\end{aligned}
\end{equation}

Now from \ef{eq:2.3}, \ef{eq:3.1}, \ef{eq:3.6} and \ef{eq:3.7},
we obtain the following system of equations:
\begin{equation} \label{eq:3.8}
\begin{pmatrix}
I-2 e^2 E_1 \\
2 e^2 E_1-e^2 E_2 \\
4 e^2 E_1-2 e^2 E_2 \\
10 e^2 E_1- 7e^2 E_2-e^2 E_3
\end{pmatrix}
=
\begin{pmatrix}
\frac{1}{2} & \frac{\omega}{2} & -\frac{1}{p+1} & e^2 \smallskip \\
\frac{3}{2} & \frac{\omega}{2} & -\frac{2 p-1}{p+1} & 3 e^2 \\
3 & \omega & -(2 p-1) & 12 e^2 \\
\frac{3}{2} & \frac{3 \omega}{2} & -\frac{3(2p-1)}{p+1} & 15 e^2
\end{pmatrix}
\begin{pmatrix}
A \\
B \\
C \\
D
\end{pmatrix}.
\end{equation}
Solving \ef{eq:3.8} with the aid of Mathematica \cite{Wol},
it follows that
\[
D(u) = \frac{2 p-1}{24(p-2)}
\left\{ 16 e^2 E_1(u)-7 e^2 E_2(u)-e^2 E_3(u) - 3 I(u) \right\}.
\]
By Lemma \ref{lem:2.2} and Lemma \ref{lem:3.3}, we find that
\[
0 \le D(u) \le -\frac{2p-1}{8(p-2)} \alpha_1 \| u\|^2 
+C e^2 \left(\| \rho \|_{\frac{6}{5}}+ \|x \cdot \nabla \rho \|_{\frac{6}{5}}
+\| x \cdot (D^2 \rho x ) \|_{\frac{6}{5}} \right) \| u\|^2,
\]
from which we arrive at a contradiction provided that
$e^2(\| \rho \|_{\frac{6}{5}}+ \|x \cdot \nabla \rho \|_{\frac{6}{5}}
+\| x \cdot (D^2 \rho x ) \|_{\frac{6}{5}})$ is sufficiently small.
\end{proof}

By Lemma \ref{lem:3.4}, we are able to apply the method of Lagrange multiplier,
which yields that if $u$ is a nontrivial critical point of $I|_{\CM}$,
there exists $\mu \in \R$ such that
\begin{equation} \label{eq:3.9}
I'(u)- \mu J'(u) =0.
\end{equation} 

\begin{lemma} \label{lem:3.5}
Suppose that $2<p<5$.
There exists $\rho_0>0$ such that if
\[
e^2(\| \rho \|_{\frac{6}{5}}+ \|x \cdot \nabla \rho \|_{\frac{6}{5}}
+\| x \cdot (D^2 \rho x ) \|_{\frac{6}{5}}) \le \rho_0,
\]
then it holds that $\mu=0$, that is, 
the set $\CM$ is a natural constraint for the functional $I$.
\end{lemma}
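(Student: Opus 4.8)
The plan is to exploit the fibering $u_\la(x)=\la^2u(\la x)$ used in Lemma~\ref{lem:3.1}: I will show that the function $f(\la):=I(u_\la)$ satisfies $f''(1)<0$ whenever $u\in\CM$, and then deduce $\mu=0$ from the Lagrange identity \ef{eq:3.9}. The starting point is the elementary fact, already implicit in the proof of Lemma~\ref{lem:3.1}, that differentiating \ef{eq:3.2} and using \ef{eq:2.6}--\ef{eq:2.7} gives $\la f'(\la)=J(u_\la)$ for every $\la>0$. In particular, if $u\in\CM$ then $f'(1)=J(u)=0$, so $\la=1$ is a critical point of $f$, and differentiating once more yields $f''(1)=\frac{d}{d\la}J(u_\la)\big|_{\la=1}$.

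Next, set $v:=\frac{d}{d\la}u_\la\big|_{\la=1}=2u+x\cdot\nabla u$. By \ef{eq:3.9}, $u$ is a weak solution of the second-order elliptic equation $I'(u)-\mu J'(u)=0$, which is of the same type as \ef{eq:1.1}; hence, arguing exactly as in Lemma~\ref{lem:2.5}, $u\in C^{1,\alpha}_{loc}(\R^3)\cap W^{2,q}_{loc}(\R^3)$, and the pairings $I'(u)v$ and $J'(u)v$ are well defined through the truncation-and-limit procedure of Lemma~\ref{lem:2.5} and Lemma~\ref{lem:2.1}(ii). Along the curve $\la\mapsto u_\la$ this gives $I'(u)v=f'(1)=0$ and $J'(u)v=f''(1)$. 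Inserting $v$ into \ef{eq:3.9} we obtain $0=I'(u)v-\mu J'(u)v=-\mu f''(1)$, so the lemma follows once we show $f''(1)\ne0$.

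To compute $f''(1)$ I would differentiate \ef{eq:3.2} twice. The polynomial part contributes $3A(u)-\frac{(2p-1)(2p-2)}{p+1}C(u)+6e^2D(u)$, and the $\rho$-dependent term contributes, after using the scaling derivatives of $E_1(u_\la)$ (and of $E_2(u_\la)$, $E_3(u_\la)$), a remainder $4e^2E_1(u)-4e^2E_2(u)-e^2E_3(u)$ whose modulus is, by Lemma~\ref{lem:2.2}, at most $Ce^2\big(\|\rho\|_{\frac{6}{5}}+\|x\cdot\nabla\rho\|_{\frac{6}{5}}+\|x\cdot(D^2\rho x)\|_{\frac{6}{5}}\big)\|u\|^2$ with $C=C(p)$. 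Eliminating $C(u)$ via the constraint $J(u)=0$, i.e. \ef{eq:3.1}, and using $2<p<5$, one is led to
\[
f''(1)\;\le\;3(2-p)A(u)-(p-1)\omega B(u)
+Ce^2\big(\|\rho\|_{\frac{6}{5}}+\|x\cdot\nabla\rho\|_{\frac{6}{5}}+\|x\cdot(D^2\rho x)\|_{\frac{6}{5}}\big)\|u\|^2 .
\]
Since $A(u),D(u)\ge0$, $2-p<0$ and $B(u)=\|u\|_2^2>0$ (because $u\ne0$), and since $\|u\|^2=A(u)+B(u)$, choosing $\rho_0<\frac1C\min\{3(p-2),(p-1)\omega\}$ makes the right-hand side strictly negative. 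Hence $f''(1)<0$, and therefore $\mu=0$.

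The main obstacle is the rigorous passage $I'(u)v=f'(1)$, $J'(u)v=f''(1)$ in the second paragraph, since $v=2u+x\cdot\nabla u$ need not lie in $\HT$: one must rerun, for the Lagrange equation, the same integration-over-$B_R(0)$ argument used for \ef{eq:1.1}, exploiting the $C^{1,\alpha}_{loc}$-regularity of $u$, the identities of Lemma~\ref{lem:2.1}(ii), the fact $S_0(u)|u|^2\in L^1(\R^3)$, the assumption $\rho,\,x\cdot\nabla\rho,\,x\cdot(D^2\rho x)\in L^{\frac{6}{5}}(\R^3)$, and $\nabla u\in L^2(\R^3)$ to discard the boundary terms along a suitable sequence $R_n\to\infty$. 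One further has to dispose of the degenerate value $\mu=\frac13$, for which the coefficient of $\Delta u$ in the Lagrange equation vanishes; in that case the equation reduces on $\{u\ne0\}$ to the pointwise relation $\frac23\omega-\frac{4-2p}{3}|u|^{p-1}+\frac43e^2S_1-\frac13e^2S_2=0$, which is impossible for small $\rho$ because $p>2$ and $S_1,S_2\in L^\infty(\R^3)$, so that value cannot occur.
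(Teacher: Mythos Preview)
Your argument is correct and gives a genuinely different, more conceptual route than the paper's. The paper tests the Lagrange equation \ef{eq:3.9} separately against $u$ and against $x\cdot\nabla u$ to obtain the two scalar identities \ef{eq:3.10} and \ef{eq:3.11}, combines them with the known values of $I(u)$ and $J(u)=0$ into a $4\times4$ linear system in $A,B,C,D$, and then, with the aid of Mathematica, checks that for every $\mu\ne0$ the system forces a sign contradiction (the degenerate value $\mu=\tfrac13$ being handled by row reduction). Your approach bypasses the linear algebra entirely: testing \ef{eq:3.9} against the single vector $v=2u+x\cdot\nabla u$ is the specific combination $2\times$\ef{eq:3.10}$-$\ef{eq:3.11}, which collapses (using $J(u)=0$) directly to $\mu f''(1)=0$, and your computation of $f''(1)$ after substituting the constraint gives
\[
f''(1)=3(2-p)A(u)-(p-1)\omega B(u)+6(2-p)e^2D(u)+4pe^2E_1(u)-2(p+1)e^2E_2(u)-e^2E_3(u),
\]
which is strictly negative for small $\rho_0$ exactly as you say. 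Both approaches require the same analytic input, namely the Pohozaev identity for the Lagrange equation (your ``rerun over $B_R(0)$'' is precisely how the paper obtains \ef{eq:3.11}), and both must single out $\mu=\tfrac13$ because the Laplacian coefficient $1-3\mu$ vanishes there; what you gain is a transparent variational interpretation (nondegeneracy of the fibering maximum) and the elimination of computer algebra.

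One small point to tighten: in your treatment of $\mu=\tfrac13$ you invoke $S_2\in L^\infty(\R^3)$, but the hypotheses only give $x\cdot\nabla\rho\in L^{6/5}(\R^3)$, hence $S_2\in L^6(\R^3)$ via Hardy--Littlewood--Sobolev, not $L^\infty$. The fix is immediate: instead of arguing pointwise, test the degenerate equation against $u$ (or, equivalently, read off the relation from \ef{eq:3.10} with $\mu=\tfrac13$) to get
\[
\tfrac{2}{3}\omega B(u)+\tfrac{2(p-2)}{3}C(u)+\tfrac{16}{3}e^2E_1(u)-\tfrac{2}{3}e^2E_2(u)=0,
\]
and then apply Lemma~\ref{lem:2.2} to force a contradiction for small $\rho_0$. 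This is exactly the mechanism behind the paper's row-reduction step.
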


\begin{proof}
First we have from \ef{eq:2.9}, \ef{eq:3.6} and \ef{eq:3.9} that
\begin{align} \label{eq:3.10}
0 &= I^{\prime}(u) u-\mu J^{\prime}(u)u 
=N(u)-\mu J^{\prime}(u)u \\
&= (1-3 \mu) A(u) +(1-\mu) \omega B(u)-(1-(2 p-1) \mu) C(u)
+(4-12 \mu) e^2 D(u) \notag \\
&\quad +(4+4 \mu) e^2 E_1(u)-2 \mu e^2 E_2(u). \notag
\end{align}
Furthermore from \ef{eq:2.10} and \ef{eq:3.7}
and arguing as in Lemma \ref{lem:2.5}, one also finds that
\begin{align} \label{eq:3.11}
0 &= P(u) - \mu Q(u) \\
&=\frac{1-3 \mu}{2} A(u)+\frac{3(1-\mu )\omega }{2} B(u)
-\frac{3-3 \mu(2 p-1)}{p+1} C(u) +(5-15 \mu) e^2 D(u) \notag \\
&\quad +(10+10 \mu) e^2 E_1(u)-(1+7 \mu ) e^2 E_2(u)-\mu e^2 E_3(u). \notag
\end{align}
Combining \ef{eq:2.3}, \ef{eq:3.1}, \ef{eq:3.10} and \ef{eq:3.11}, 
we arrive at the following system of equations:
\begin{equation} \label{eq:3.12}
\begin{pmatrix}
I-2 e^2 E_1 \\
2 e^2 E_1 -e^2 E_2 \\
(4+4\mu) e^2 E_1 -2 \mu e^2 E_2 \\
(10+10 \mu) e^2 E_1-(1+7 \mu) e^2 E_2 -e^2 \mu E_3
\end{pmatrix}
=\Lambda 
\begin{pmatrix}
A \\
B \\
C \\
D
\end{pmatrix},
\end{equation}
where 
\[
\Lambda = 
\begin{pmatrix}
\frac{1}{2} & \frac{\omega}{2} & -\frac{1}{p+1} & e^2 \smallskip \\
\frac{3}{2} & \frac{\omega}{2} & -\frac{2 p-1}{p+1} & 3 e^2 \\
3 \mu-1 & (\mu-1) \omega & 1-(2 p-1) \mu & (12 \mu-4)e^2 \\
\frac{3\mu -1}{2} & \frac{3 (\mu -1) \omega}{2} & -\frac{3(2p-1) \mu-3}{p+1} 
& (15 \mu-5)e^2
\end{pmatrix}.
\]
By a direct calculation with the aid of Mathematica, it follows that
\[
\operatorname{det} \Lambda
=\frac{4(p-2)(p-1) e^2 \omega }{p+1}\mu (3 \mu-1).
\]
If $\mu= \frac{1}{3}$, we can perform 
row operations on the augmented matrix to obtain
\[
\left( \begin{array}{l|c}
&I-2 e^2 E_1 \\
& 2 e^2 E_1 -e^2 E_2 \\
\mbox{\smash{\huge{$\Lambda$}}}& (4+4\mu) e^2 E_1 -2 \mu e^2 E_2 \\
& (10+10 \mu) e^2 E_1-(1+7 \mu) e^2 E_2 -e^2 \mu E_3
\end{array} \right)
\]
\[
\to 
\left( \begin{array}{cccc|c}
\frac{1}{2}&\frac{\omega}{2}&-\frac{1}{p+1}&e^2&I-2e^2E_1  \\
0&-\omega&\frac{4-2p}{p+1}&0&-3I+8e^2 E_1-e^2E_2 \\
0&-\omega&2-p&0&8e^2E_1-e^2E_2 \\
0&0&0&0& 3I+\frac{16}{3}e^2E_1 - \frac{7}{3}e^2E_2
-\frac{1}{3}e^2E_3 
\end{array} \right).
\]
However by Lemma \ref{lem:2.2} and Lemma \ref{lem:3.3}, it holds that
\[
3 I(u)+\frac{16}{3} e^2 E_1(u)-\frac{7}{3} e^2 E_2(u)-\frac{1}{3} e^2 E_3(u)>0
\quad \text{for any} \ u \in \CM
\]
if $e^2 ( \| \rho \|_{\frac{6}{5}} + \|x \cdot \nabla \rho\|_{\frac{6}{5}}
+\| x \cdot (D^2 \rho(x) x) \|_{\frac{6}{5}}) \ll 1$,
which is a contradiction.

On the other hand when $\operatorname{det} \Lambda \ne 0$,
one can solve \ef{eq:3.12} to obtain
\[
0 \le C(u) = \frac{p+1}{4(p-1)(p-2)}
\left\{ 16e^2 E_1(u) - 7e^2 E_2(u) -e^2E_3(u) -3 I(u) \right\}.
\]
This leads a contradiction provided that 
$e^2 ( \| \rho \|_{\frac{6}{5}} + \|x \cdot \nabla \rho\|_{\frac{6}{5}}
+\| x \cdot (D^2 \rho x) \|_{\frac{6}{5}} )$ is sufficiently small,
from which we conclude that $\mu=0$.
\end{proof}

Now let us define the ground state energy level for \ef{eq:1.1} by
\[
m := \inf_{u \in \CS} I(u),
\quad \CS =\left\{ u \in \HT \setminus \{0 \} 
\mid I'(u)=0 \right\}.
\]
By Lemma \ref{lem:3.5}, we are able to prove the following.

\begin{proposition} \label{prop:3.6}
Suppose that $2<p<5$ and assume that
\[
e^2 \left( \| \rho \|_{\frac{6}{5}} +\|x \cdot \nabla \rho \|_{\frac{6}{5}}
+\| x \cdot (D^2 \rho x) \|_{\frac{6}{5}}\right) \le \rho_0
\]
for sufficiently small $\rho_0>0$.

If $\sigma$ defined in \ef{eq:3.4} is attained by some $u_0 \in \HT \setminus \{0\}$,
then $u_0$ is a ground state solution of \ef{eq:1.1}, namely $u_0$ satisfies
\[
\sigma = I(u_0)= m.
\]
\end{proposition}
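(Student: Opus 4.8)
The plan is to establish the two inequalities $\sigma \le m$ and $m \le \sigma$, combined with showing that the minimizer $u_0$ of $I|_{\CM}$ is in fact a genuine solution of \ef{eq:1.1}. First I would observe that $\CS \subset \CM$: by Lemma \ref{lem:2.5}, every nontrivial weak solution of \ef{eq:1.1} satisfies $N(u)=P(u)=0$, hence $J(u)=2N(u)-P(u)=0$, so $u \in \CM$. Since $I$ restricted to $\CM$ has infimum $\sigma$ and $\CS$ is a (possibly smaller) subset of $\CM$, this immediately gives $\sigma \le \inf_{\CS} I = m$.

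Next I would show that the minimizer $u_0$ of $\sigma$ belongs to $\CS$, i.e. that $I'(u_0)=0$. By Lemma \ref{lem:3.4}, $\CM$ is a co-dimension one $C^1$-manifold, so $u_0$ being a minimizer of $I|_{\CM}$ is a constrained critical point; the Lagrange multiplier rule gives $\mu \in \R$ with $I'(u_0) = \mu J'(u_0)$. But Lemma \ref{lem:3.5} asserts precisely that under the smallness assumption on $\rho$, one must have $\mu=0$, so $I'(u_0)=0$ and $u_0 \in \CS$. Consequently $m \le I(u_0) = \sigma$. Combining with the previous paragraph yields $m = \sigma$ and $I(u_0) = m$, so $u_0$ is a ground state solution of \ef{eq:1.1}.

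The main obstacle — though it is already handled by the lemmas we are allowed to invoke — is the natural constraint property (Lemma \ref{lem:3.5}): a priori the Lagrange multiplier $\mu$ need not vanish, and ruling out $\mu \neq 0$ and $\mu = \tfrac13$ required solving the $4\times4$ linear system \ef{eq:3.12} and using the positivity estimate from Lemma \ref{lem:3.3} together with the smallness of the $\rho$-dependent terms. In writing the proof of Proposition \ref{prop:3.6} itself, the only care needed is to make sure the smallness threshold $\rho_0$ is chosen small enough to simultaneously satisfy the hypotheses of Lemmas \ref{lem:3.2}, \ref{lem:3.3}, \ref{lem:3.4} and \ref{lem:3.5}; since each of those lemmas provides its own $\rho_0$ independent of $e$ and $\rho$, taking the minimum suffices. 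I would also remark that $u_0$ being nontrivial is guaranteed since $u_0 \in \CM$ forces $\|u_0\|_{p+1} \ge \delta_1 > 0$ by \ef{eq:3.3} (as $J(u_0)=0 \le 0$), so the passage through the Lagrange multiplier argument is legitimate and the conclusion $u_0 \in \CS$ is about a genuine nonzero solution.

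Thus the proof is short: $\CS \subseteq \CM$ gives $\sigma \le m$; the natural constraint lemma upgrades the minimizer $u_0$ to an element of $\CS$, giving $m \le I(u_0) = \sigma$; hence $\sigma = I(u_0) = m$ and $u_0$ is a ground state solution. The existence of a minimizer $u_0$ attaining $\sigma$ is the hypothesis of the proposition and will be supplied separately (via concentration–compactness in Section 4).
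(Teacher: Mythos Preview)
Your proposal is correct and follows essentially the same approach as the paper: establish $\sigma \le m$ via the inclusion $\CS \subset \CM$ (from Lemma \ref{lem:2.5}), then use the natural constraint property (Lemma \ref{lem:3.5}) to show the minimizer $u_0$ satisfies $I'(u_0)=0$, hence $u_0 \in \CS$ and $m \le I(u_0) = \sigma$. The paper's proof is slightly terser---it invokes Lemma \ref{lem:3.5} directly without restating the Lagrange multiplier setup---but the logic is identical.
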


\begin{proof}
Let $\tilde{u} \in \HT$ be a nontrivial critical point of $I$.
Then by Lemma \ref{lem:2.5}, it follows that $\tilde{u} \in \CM$ and hence
\[
\sigma = \inf_{u \in \CM} I(u) \le I(\tilde{u}).
\]
Taking infimum over $\CS$, we find that $\sigma \le m$.

On the other hand by Lemma \ref{lem:3.5}, if $\sigma$ is achieved by $u_0 \in \CM$,
one has $I'(u_0)=0$.
This implies that $u_0 \in \CS$ and
\[
m \le \inf_{u \in \CS} I(u) \le I(u_0) = \sigma,
\]
from which we conclude that $m= \sigma$.
\end{proof}

By Proposition \ref{prop:3.6}, 
it suffices to investigate the attainability of $\sigma$.
To this aim, we next establish the following energy estimate,
which is a key tool in this paper 
and will be also used to prove the equivalence between
two types of ground state solutions in Section 5.

\begin{lemma} \label{lem:3.7}
Suppose that $2<p<5$ and take $T \ge 4$ so that $T^{2p-4} \ge 3$.
There exist $\alpha= \alpha(T)>0$ and $\beta= \beta(T) >0$
independent of $e$, $\rho$, $\la$ such that the following estimates hold:
For any $u \in H^1(\R^3, \C)$,
\begin{align} \label{3.7-1}
&I(u)-I(u_\la)-\frac{1-\la^3}{3} J(u) \\
&\ge \frac{(1-\la)^2 \omega}{3} \|u \|_2^2
+\frac{\alpha}{6(p+1)}(1-\la)^2 \| u \|_{p+1}^{p+1} \notag \\
&\quad -\beta(1-\la)^2 e^2
\left( \| \rho \|_{\frac{6}{5}} +\|x \cdot \nabla \rho \|_{\frac{6}{5}}
+\| x \cdot (D^2 \rho x) \|_{\frac{6}{5}} \right)
\left( \|u\|_2^2 + \| u \|_{p+1}^2\right) \quad \text{for} \ 0 \le \la \le T, \notag
\end{align}
\begin{align} \label{3.7-2}
&I(u)-I(u_\la)-\frac{1-\la^3}{3} J(u) \\
&\ge \frac{(1-\la)^2 \omega}{6} \| u \|_2^2 +\frac{\la^3 \omega}{3 T} \| u \|_2^2
+\frac{\alpha}{12(p+1)}(1-\la)^2 \| u \|_{p+1}^{p+1}
+\frac{\alpha}{6(p+1)} \la^3 \| u \|_{p+1}^{p+1} \notag \\
&\quad -\beta \la^3 e^2 
\left( \| \rho \|_{\frac{6}{5}} + \| x \cdot \nabla \rho \|_{\frac{6}{5}} 
+\| x \cdot (D^2 \rho x) \|_{\frac{6}{5}} \right)
\left( \|u\|_2^2 + \| u \|_{p+1}^2\right) \quad \text{for} \ \la \ge T. \notag
\end{align}

Assume further that $\| u \|_{p+1} \ge \delta$ for some $\delta>0$
independent of $e$, $\rho$.
There exists $\rho_0>0$ 
independent of $e$, $\rho$, $\la$ such that if
\[
e^2 \left( \| \rho \|_{\frac{6}{5}} +\|x \cdot \nabla \rho \|_{\frac{6}{5}}
+\| x \cdot (D^2 \rho x) \|_{\frac{6}{5}} \right) \le \rho_0,
\]
then the following estimate holds:
\begin{align} \label{eq:3.13}
&I(u)-I(u_\la) - \frac{1-\la^3}{3} J(u) 
\ge \frac{(1-\la)^2 \omega}{6} \| u \|_2^2
+\frac{\alpha \delta^{p-1}}{12(p+1)}(1-\la)^2 \| u \|_{p+1}^2 
\quad \text { for all } \ \la >0.
\end{align}
\end{lemma}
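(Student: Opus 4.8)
The strategy is to compute the left-hand side $I(u)-I(u_\la)-\frac{1-\la^3}{3}J(u)$ explicitly in terms of the quantities $A(u)$, $B(u)$, $C(u)$, $D(u)$, together with the nonlocal doping terms $E_1$, $E_2$, $E_3$, using the scaling relations \ef{eq:3.2}, \ef{eq:2.6}, \ef{eq:2.7} and the formulas \ef{eq:2.3}, \ef{eq:3.1}. The homogeneous part (the terms with $A$, $B$, $C$, $D$) produces polynomial-in-$\la$ coefficients that one studies by elementary calculus: the $A$ and $D$ coefficients combine into something proportional to $\la^3$ times a nonnegative quantity (in fact $A,D\ge0$ by \ef{eq:2.4}, and their coefficient should vanish identically or be nonnegative by the choice of the combination $2N-P$), the $B$-coefficient is $\frac{\omega}{2}(1-\la) - \frac{\omega}{6}(1-\la^3)\cdot(\text{something})$ which one shows is bounded below by $\frac{(1-\la)^2\omega}{3}$ on $[0,T]$, and the $C$-coefficient is $-\frac{1}{p+1}(1-\la^{2p-1}) + \frac{2p-1}{3(p+1)}(1-\la^3)$, which for $2<p<5$ one shows is bounded below by $\frac{\alpha}{6(p+1)}(1-\la)^2$ for a suitable $\alpha=\alpha(T)>0$ on the interval $[0,T]$, using $T^{2p-4}\ge3$. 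This is where the condition $T\ge4$, $T^{2p-4}\ge3$ is needed: it guarantees a one-sided bound on a polynomial that changes sign at $\la=1$.

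The doping contribution to $I(u)-I(u_\la)-\frac{1-\la^3}{3}J(u)$ is, after using \ef{eq:2.7} and the substitution $y=\la^{-1}x$, a combination of integrals of the form $\int S_0(u)(x)\,\rho(\la^{-1}x)\,dx$, $\int S_0(u)(x)\,(\la^{-1}x)\cdot\nabla\rho(\la^{-1}x)\,dx$, and their derivatives in $\la$. The plan is to apply the second-order Taylor expansion of $\la\mapsto \rho(\la^{-1}x)$ (equivalently, of the scaled nonlocal energy) around $\la=1$, which produces a zeroth-order term, a first-order term in $x\cdot\nabla\rho$, and a remainder controlled by $x\cdot(D^2\rho\,x)$ evaluated at an intermediate point; this is exactly why the hypothesis \ef{eq:1.5} includes $x\cdot(D^2\rho x)\in L^{6/5}$. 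Each of these is then estimated by Lemma \ref{lem:2.2}, producing the factor $e^2(\|\rho\|_{6/5}+\|x\cdot\nabla\rho\|_{6/5}+\|x\cdot(D^2\rho x)\|_{6/5})(\|u\|_2^2+\|u\|_{p+1}^2)$ multiplied by $(1-\la)^2$ for $\la\in[0,T]$ (the $(1-\la)^2$ coming from the Taylor remainder being second order, together with a uniform bound on the scaling factors $\la^{2a-\frac52 b}$ on the compact interval $[0,T]$). Combining these pieces gives \ef{3.7-1}.

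For \ef{3.7-2} (the range $\la\ge T$), I would split off from the homogeneous lower bound a genuinely negative-definite piece proportional to $\la^3$: since $T^{2p-4}\ge3$ and $T\ge4$, the $C$-coefficient polynomial is negative and $\le -c\,\la^{2p-1}$ for large $\la$, which one absorbs to produce both the $\frac{\alpha}{12(p+1)}(1-\la)^2\|u\|_{p+1}^{p+1}$ and the extra $\frac{\alpha}{6(p+1)}\la^3\|u\|_{p+1}^{p+1}$ term; similarly the $B$-coefficient yields $\frac{(1-\la)^2\omega}{6}+\frac{\la^3\omega}{3T}$ after using $\la\ge T$ to compare $\la$ and $\la^3/T$. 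The doping remainder for $\la\ge T$ is no longer controlled by $(1-\la)^2$ but by $\la^3$ (using \ef{eq:2.8} with $2a-\frac52 b = 4-\frac52 = \frac32 \le 3$, and the crude bound $\la^{3/2}\le\la^3$ for $\la\ge1$), giving the stated $\beta\la^3 e^2(\cdots)$ term. Finally, \ef{eq:3.13} follows by combining \ef{3.7-1} and \ef{3.7-2}: on $[0,T]$ one uses $\|u\|_{p+1}\ge\delta$ to write $\|u\|_{p+1}^{p+1}\ge\delta^{p-1}\|u\|_{p+1}^2$, so the $\frac{\alpha}{6(p+1)}(1-\la)^2\|u\|_{p+1}^{p+1}$ term dominates the doping remainder once $\rho_0$ is small (absorbing also the $\omega$-term loss from $\frac13$ to $\frac16$); on $[T,\infty)$ one uses the extra $\la^3$ terms in \ef{3.7-2} to absorb the $\beta\la^3 e^2(\cdots)$ remainder, again after imposing $\|u\|_{p+1}\ge\delta$ and $\rho_0$ small, noting that $\la^3\ge\frac13\la^3\ge\frac{1}{3T}(1-\la)^2$ type comparisons keep everything of the right order.

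**Main obstacle.** The delicate point is the uniform control of the doping remainder by $(1-\la)^2$ on the whole interval $[0,T]$: one must be careful that the second-order Taylor expansion of $\la\mapsto\rho(\la^{-1}x)$ in $\la$ around $1$ genuinely produces a remainder of the form $(1-\la)^2$ times something estimable by $\|x\cdot(D^2\rho x)\|_{6/5}$ after integrating against $S_0(u)$, with constants depending only on $T$ (not on $\rho$ or $e$), since the intermediate point in Taylor's theorem ranges over $[0,T]$ and the scaling Jacobians must be bounded there; handling the first-derivative term (which involves $x\cdot\nabla\rho$ and whose coefficient as a function of $\la$ itself vanishes to first order at $\la=1$ after accounting for the $\frac{1-\la^3}{3}J(u)$ subtraction) requires matching the expansions of $E_1(u_\la)$ and of the $E_2$-term in $J$ carefully. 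Getting all the bookkeeping right so that the final constants $\alpha$, $\beta$ depend only on $T$ is the bulk of the work.
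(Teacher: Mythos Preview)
Your plan is essentially the paper's proof: the paper derives exactly the identity
\[
I(u)-I(u_\la)-\tfrac{1-\la^3}{3}J(u)
=\tfrac{(1-\la)^2(\la+2)\omega}{6}B(u)+\tfrac{1}{3(p+1)}g(\la)C(u)+R(\la,u),
\]
with $g(\la)=3\la^{2p-1}-(2p-1)\la^3+2p-4$ (the $A$ and $D$ terms drop out exactly, as you anticipated), then bounds the $B$- and $C$-coefficients below as you describe, and handles the doping remainder $R(\la,u)$ via Taylor expansion in $\la$ and Lemma~\ref{lem:2.2}.

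One refinement you should be aware of: your plan to control $R(\la,u)$ by a second-order Taylor expansion uniformly on $[0,T]$ does not quite work as stated, because the second derivative of $\la\mapsto \la^{-1}\rho(\la^{-1}x)$ carries a factor $\la^{-3}$ (and $\la^{-4}$, $\la^{-5}$ on the gradient and Hessian pieces), so the Taylor remainder blows up as $\la\to 0^{+}$ and cannot give a $T$-dependent constant on all of $[0,T]$. The paper's fix is to split $[0,T]=[0,\tfrac12]\cup[\tfrac12,T]$: on $[\tfrac12,T]$ the Taylor argument goes through exactly as you outline (the intermediate point $\xi\in(\tfrac12,T)$ keeps all $\xi^{-k}$ factors bounded); on $[0,\tfrac12]$ one simply uses $(1-\la)^2\ge\tfrac14$ together with the crude bound $|M(\la,x)|\le C(|\rho|+|x\cdot\nabla\rho|)$ coming from $\rho\ge 0$, so no Taylor expansion is needed there. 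With this extra split, everything else in your outline matches the paper.
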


\begin{proof}
The proof consists of four steps.

\smallskip
\noindent
\textbf{Step 1} (Transformation of $I(u)-I(u_\la)$): 
First we observe from \ef{eq:3.2} that
\[
\begin{aligned}
I(u)-I(u_\la) 
&= \frac{1-\la^3}{2} A(u) +\frac{(1-\la) \omega}{2} B(u)-\frac{1-\la^{2p-1}}{p+1} C(u) 
+(1-\la^3 ) e^2 D(u) \\
&\quad +2 e^2 E_1(u) +\frac{e^2 \la^{-1}}{2} \intR S_0(u) \rho(\la^{-1} x) \,dx.
\end{aligned}
\]
Transforming \ef{eq:3.1}, one also has
\[
\frac{1}{2} A(u)+e^2 D(u)
=\frac{1}{3} J(u) -\frac{\omega}{6} B(u)
+\frac{2p-1}{3(p+1)} C(u) 
+\frac{2}{3} e^2 E_1(u) -\frac{1}{3} e^2 E_2(u),
\]
from which we deduce that
\begin{align*} 
I(u)-I(u_\la) &= \frac{1-\la^3}{3} J(u) +\frac{(1-\la)^2(\la+2)\omega }{6} B(u) \\
&\quad +\frac{1}{3(p+1)} \left( 3 \la^{2 p-1}-(2p-1) \la^3+2 p-4 \right) C(u) \notag \\
&\quad +\frac{8-2 \la^3}{3} e^2 E_1(u)
-\frac{1-\la^3}{3} e^2 E_2(u)
+\frac{e^2 \la^{-1}}{2} \int_{\R^3} S_0(u) \rho(\la^{-1} x) \,d x. \notag
\end{align*}
Moreover putting
\[
\begin{aligned}
R(\la,u) &:= 
\frac{8-2 \la^3}{3} e^2 E_1(u) -\frac{1-\la^3}{3} e^2 E_2(u)
+\frac{e^2 \la^{-1}}{2} \int_{\mathbb{R}^3} S_0(u) \rho(\la^{-1} x) \,dx \\
&= e^2 \int_{\mathbb{R}^3} S_0(u) M(\la,x) \,dx, \\
M(\la,x) &:= \frac{\la^3-1}{6} \big( \rho(x) + x \cdot \nabla \rho(x) \big)
- \frac{\rho(x)}{2} +\frac{\rho(\la^{-1}x)}{2 \la},
\end{aligned}
\]
we arrive at 
\begin{align} \label{eq:3.14}
I(u) - I(u_\la) &= 
\frac{1-\la^3}{3} J(u) + \frac{(1-\la)^2(\la +2) \omega}{6} B(u)  \\
&\quad +\frac{1}{3(p+1)} \left(3 \la^{2p-1}- (2p-1) \la^3+2 p-4 \right) C(u)
+R(\la, u). \notag
\end{align}

\noindent
\textbf{Step 2} (Evaluation of coefficients): 
Now for $T \ge 4$, one has
\[
\begin{aligned}
\frac{(1-\la)^2(\la +2)}{6} 
&= \frac{(1-\la)^2}{3}+\frac{\la^3}{6}-\frac{\la^2}{3}+\frac{\la}{6} \\
&\ge \frac{(1-\la)^2}{3} +\frac{2 \la^3}{3 T}-\frac{\la^3}{3 T}
=\frac{(1-\la)^2}{3}+\frac{\la^3}{3 T} \quad \text { for } \ \la \ge T,
\end{aligned}
\]
from which we get
\begin{equation} \label{3.7-3}
\frac{(1-\la)^2(\la+2)}{6} \ge
\begin{cases}
\frac{(1-\la)^2}{3} &\text { for } \ 0 \le \la \le T, \\
\frac{(1-\la)^2}{6}+\frac{\la^3}{3 T} &\text { for } \ \la \ge T.
\end{cases}
\end{equation}

Next we claim that 
\begin{equation} \label{3.7-4}
\frac{1}{3(p+1)} \left( 3 \la^{2 p-1}-(2 p-1) \la^3+2 p-4 \right) 
\ge \begin{cases}
\frac{\alpha}{6(p+1)}(1-\la)^2 &\text { for } 0 \le \la \le T, \\
\frac{\alpha}{12 (p+1)} (1-\la)^2 +\frac{\alpha}{6(p+1)} \la^3 
&\text{ for } \ \la \ge T.
\end{cases}
\end{equation}
For this purpose, we put 
$g(\la):= 3 \la^{2p-1} - (2p-1) \la^3 + 2p-4$.
For $\la \ge T$, one has
\[
g(\la) \ge \la^3 \left(3 \la^{2 p-4}-(2 p-1) \right) 
\ge \la^3 \left( 3 T^{2 p-4}-(2 p-1)\right).
\]
Since $2<p<5$ and $T^{2p-4} \ge 3$, 
it holds $\frac{2p-1}{3} < 3 \le T^{2p-4}$.
Taking $C_1 = 3T^{2p-4}-(2p-1)>0$, 
we get
\begin{equation} \label{3.7-5}
g(\la) \ge C_1 \la^3 \quad \text{for} \ \la \ge T.
\end{equation}
Now we take $0<\tau<1$ so that $(1-\tau)^{2p-4} = \frac{p}{2(p-1)}$,
which is possible because $p>2$.
For $0 \le \la \le 1-\tau$, it holds that
$g'(\la) = 3(2p-1) \la^2 (\la^{2p-4}-1) <0$ and thus
\begin{equation} \label{3.7-6}
g(\la) \ge g(1-\tau) \ge  g(1-\tau)(1-\la)^2 \quad \text {for} \ 0 \le \la \le 1-\tau.
\end{equation}
Next we show that 
\begin{equation} \label{3.7-7}
g(\la) \ge C_2 (1-\la)^2 \quad \text{for} \ \la \ge 1+\tau 
\ \text{and some} \ C_2>0.
\end{equation}
Letting $h(\la) := g(\la)-C_2(1-\la)^2$, one finds that
\[
\begin{aligned}
h^{\prime}(\la) &= 3(2 p-1) \la^2 \left( \la^{2 p-4}-1 \right) -2 C_2 \la+2 C_2 
\ge \la^2 \left( 3(2 p-1) (\la^{2 p-4}-1) -2 C_2 \right) \\
&\ge \la^2 \left( 3(2 p-1) \left( (1+\tau)^{2p-4}-1 \right) -2 C_2\right) 
\quad \text{for} \ \la \ge 1+\tau.
\end{aligned}
\]
If $0< C_2 \le \frac{3}{2} (2p-1) \left( (1+\tau)^{2p-4} -1 \right)$, 
then $h'(\la) \ge 0$ on $[1+\tau, \infty)$.
Thus taking 
\[
C_2 = \min \left\{ \frac{g(1+\tau)}{\tau^2}, 
\frac{3}{2}(2p-1) \left( (1+\tau)^{2 p-4}-1 \right) \right\},
\]
we deduce that $h'(\la) \ge 0$ and $h(\la) \ge h(1+\tau) \ge 0$,
which shows \ef{3.7-7}.

Finally since $g(1)=g'(1)=0$ and $g''(\la)=6(2p-1) \left( (p-1)\la^{2p-4} -1 \right)$, 
we have by the Taylor theorem that
\[
g(\la)= \frac{1}{2} g''(\xi) (1-\la)^2 \quad \text{for some $\xi$ between $\la$ and $1$}.
\]
When $1-\tau \le \la \le 1+\tau$, it follows that 
$\la^{2p-4} \ge (1-\tau)^{2p-4} = \frac{p}{2(p-1)}$, 
from which we obtain
$g''(\la) \ge 6 (2p-1) \left( \frac{p}{2} -1 \right)$ and
\begin{equation} \label{3.7-8}
g(\la)\ge \frac{3}{2}(2p-1)(p-2)(1-\la)^2 \quad \text{for} \ 1-\tau \le \la \le 1+\tau.
\end{equation}
Combining \ef{3.7-5}-\ef{3.7-8} and putting
\[
\alpha := \min \left\{ \frac{3}{2}(p-1)(p-2), g(1-\tau), C_1, C_2 \right\},
\]
we arrive at
\[
g(\la) \ge \begin{cases}
\alpha(1-\la)^2 &\text{for} \ \la \ge 0, \\
\alpha \la^3 &\text{for} \ \la \ge T.
\end{cases}
\]
Moreover since 
\[
\frac{1}{3(p+1)} \left( 3 \la^{2 p-1}-(2p-1) \la^3+2 p-4\right)
= \frac{1}{3(p+1)} g(\la)
= \frac{1}{6(p+1)} g(\la) +\frac{1}{6(p+1)} g(\la),
\]
we obtain \ef{3.7-4}.

\smallskip
\noindent
\textbf{Step 3} (Estimate for $R(\la,u)$): 
First when $\la \ge T$, one find from the assumption \ef{eq:1.6} that
\[
\begin{aligned}
M(\la, x) 
&\ge -\frac{\la^3-1}{6} \big( |\rho (x)|+| x \cdot \nabla \rho(x) | \big)
-\frac{1}{2}|\rho(x)| 
\ge -\frac{\la^3}{6} \big( |\rho(x)|+| x \cdot \nabla \rho(x) | \big)
-\frac{1}{2}|\rho(x)| \\
&\ge -\la^3 \left( \frac{2}{3} | \rho(x)| +\frac{1}{6} |x \cdot \nabla \rho(x)| \right).
\end{aligned}
\]
Thus by Lemma \ref{lem:2.2}, we get
\begin{align} \label{3.7-9}
R(\la, u) &\ge - \la^3 e^2 \int_{\R^3} |S_0(u)| 
\left( \frac{2}{3}|\rho(x)|+\frac{1}{6} |x \cdot \nabla \rho(x)|\right) \,d x \notag \\
&\ge - \la^3 e^2 
\left( \frac{2}{3} \| \rho \|_{\frac{6}{5}}
+\frac{1}{6} \| x \cdot \nabla \rho \|_{\frac{6}{5}} \right)
\| S_0(u) \|_6 \notag \\
&\ge -C_3 \la^3 e^2 \left( \|\rho\|_{\frac{6}{5}}
+\|x \cdot \nabla \rho\|_{\frac{6}{5}}\right)
\left(\| u\|_2^2+\| u\|_{p+1}^2 \right)
\quad \text{for} \ \la \ge T,
\end{align}
where $C_3>0$ is independent of $e$, $\rho$, $\la$.
Next for $0 \le \la \le \frac{1}{2}$, we have
\[
M(\la, x) \ge 
\frac{\la^3-1}{6} \big( |\rho(x)|+|x \cdot \nabla \rho(x)| \big) -\frac{1}{2}|\rho(x)| 
\ge -\frac{2}{3}|\rho(x)|-\frac{1}{6} | x \cdot \nabla \rho(x)|,
\]
from which one concludes that
\begin{align} \label{3.7-10}
R(\la,u) &\ge -e^2 \left( \frac{2}{3}\|\rho\|_{\frac{6}{5}}
+\frac{1}{6} \| x \cdot \nabla \rho \|_{\frac{6}{5}} \right) \| S_0(u) \|_6 \notag \\
&\ge - 4(1-\la)^2 e^2 
\left( \frac{2}{3}\|\rho\|_{\frac{6}{5}}
+\frac{1}{6} \| x \cdot \nabla \rho \|_{\frac{6}{5}} \right) \| S_0(u) \|_6 \notag \\
&\ge - C_4(1-\la)^2 e^2 \left( \|\rho\|_{\frac{6}{5}}
+\| x \cdot \nabla \rho \|_{\frac{6}{5}}\right)
\left(\| u\|_2^2+ \| u\|_{p+1}^2 \right) 
\quad \text{for} \ 0 \le \la \le \frac{1}{2}
\end{align}
for some $C_4>0$ independent of $e$, $\rho$, $\la$.
For $\frac{1}{2} \le \la \le T$, we first observe that
\[
\begin{aligned}
\frac{\partial M}{\partial \la}(\la, x) 
&= \frac{\la^2}{2} \big( \rho(x) +x \cdot \nabla \rho(x) \big)
-\frac{\rho(\la^{-1}x)}{2 \la^2} -\frac{x \cdot \nabla \rho(\la^{-1} x)}{2 \la^3}, \\
\frac{\partial^2 M}{\partial \la^2}(\la, x) 
&= \la \big( \rho(x)+x \cdot \nabla \rho(x) \big)
+\frac{\rho(\la^{-1} x)}{\la^3} 
+\frac{2 x \cdot \nabla \rho (\la^{-1} x)}{\la^4}
+\frac{x \cdot \left( D^2 \rho (\la^{-1}x)x \right)}{2\la^5}.
\end{aligned}
\]
Then for fixed $x \in \R^3$, one finds that 
$M(1,x)= \frac{\partial M}{\partial \la}(1, x)=0$ and
\[
\begin{aligned}
\frac{\partial^2 M}{\partial \la^2}(\la, x) 
&\ge - \la \big( |\rho(x)|+| x \cdot \nabla \rho(x)| \big) \\
&\quad -\frac{1}{\la^3} 
\left( |\rho(\la^{-1}x)|+2 |(\la^{-1} x) \cdot \nabla \rho (\la^{-1} x)|
+\frac{1}{2} |(\la^{-1} x) \cdot D^2 \rho (\la^{-1} x) (\la^{-1} x)| \right) \\
&=: - N(\la,x).
\end{aligned}
\]
By the Taylor theorem, there exists $\xi = \xi(\la,x) \in \left( \frac{1}{2}, T \right)$
such that 
\[
M(\la, x)=\frac{1}{2} \frac{\partial^2 M}{\partial \la^2}(\xi, x)(1-\la)^2 
\ge -\frac{1}{2} N(\xi, x)(1-\la)^2.
\]
Using the H\"older inequality, we deduce that 
\[
R(\la, u) \ge -\frac{1}{2}(1-\la)^2 e^2 \|N(\xi, x) \|_{\frac{6}{5}} \|S_0(u) \|_6
\quad \text{for} \ \frac{1}{2} \le \la \le T.
\]
Moreover since $\frac{1}{2} < \xi < T$, we have
\[
\begin{aligned}
\| N(\xi, x)\|_{\frac{6}{5}} 
&\le \| \xi \rho \|_{\frac{6}{5}} + \| \xi x \cdot \nabla \rho \|_{\frac{6}{5}} 
+ \| \xi^{-\frac{1}{2}} \rho \|_{\frac{6}{5}}
+2 \| \xi^{-\frac{1}{2}} x \cdot \nabla \rho \|_{\frac{6}{5}}
+\frac{1}{2} \| \xi^{-\frac{1}{2}} x (D^2 \rho x) \|_{\frac{6}{5}} \\
&\le T \left( \|\rho \|_{\frac{6}{5}} + \| x \cdot \nabla \rho \|_{\frac{6}{5}}\right) 
+\sqrt{2} \left( \| \rho \|_{\frac{6}{5}} + 2 \| x \cdot \nabla \rho \|_{\frac{6}{5}}
+ \frac{1}{2} \| x \cdot (D^2 \rho x) \|_{\frac{6}{5}} \right).
\end{aligned}
\]
Then by Lemma \ref{lem:2.2}, it follows that 
\begin{equation} \label{3.7-11}
R(\la, u) \ge -C_5 (1 - \la)^2 e^2 
\left( \|\rho\|_{\frac{6}{5}}+\| x \cdot \nabla \rho \|_{\frac{6}{5}}
+\| x \cdot (D^2 \rho x) \|_{\frac{6}{5}} \right)
\left( \| u \|_2^2 + \| u \|_{p+1}^2\right)
\quad \text{for} \ \frac{1}{2} \le \la \le T,
\end{equation}
where $C_5>0$ is independent of $e$, $\rho$, $\la$.
From \ef{3.7-9}-\ef{3.7-11}, letting $\beta= \min \{ C_3, C_4, C_5 \}>0$,
we find that
\begin{align} \label{3.7-12}
&R(\la, u) \\
&\ge 
\begin{cases}
- \beta (1 - \la)^2 e^2 
\left( \|\rho\|_{\frac{6}{5}}+\| x \cdot \nabla \rho \|_{\frac{6}{5}}
+\| x \cdot (D^2 \rho x) \|_{\frac{6}{5}} \right)
\left( \| u \|_2^2 + \| u \|_{p+1}^2\right)
&\text{for} \ 0 \le \la \le T, \\
- \beta \la^3 e^2 
\left( \|\rho\|_{\frac{6}{5}}+\| x \cdot \nabla \rho \|_{\frac{6}{5}}
+\| x \cdot (D^2 \rho x) \|_{\frac{6}{5}} \right)
\left( \| u \|_2^2 + \| u \|_{p+1}^2\right)
&\text{for} \ \la \ge T.
\end{cases} \notag
\end{align}

\noindent
\textbf{Step 4} (Conclusion): 
Now from \ef{eq:3.14}, \ef{3.7-3}, \ef{3.7-4} and \ef{3.7-12},
we can see that \ef{3.7-1} and \ef{3.7-2} hold.

Finally suppose that $\| u \|_{p+1} \ge \delta$.
If 
\[
e^2 \left( \| \rho \|_{\frac{6}{5}}+\|x \cdot \nabla \rho\|_{\frac{6}{5}}
+\|x (D^2 \rho x ) \|_{\frac{6}{5}} \right) 
\le \frac{1}{2 \beta} \min 
\left\{ \frac{2 \omega}{3 T}, \frac{\alpha \delta^{p-1}}{6(p+1)} \right\},
\]
using \ef{3.7-1} and \ef{3.7-2}, we obtain
\[
I(u)-I(u_\la)-\frac{1-\la^3}{3} J(u) 
\ge \frac{(1-\la)^2\omega }{6}\| u\|_2^2
+\frac{\alpha \delta^{p-1}}{12(p+1)}(1-\la)^2 \| u\|_{p+1}^2
\quad \text{for all} \ \la >0.
\]
This finishes the proof.
\end{proof}

\begin{remark} \label{rem:3.8}
Letting 
\[
F(\la, x) :=
\frac{1}{\la^4} \left( \rho(\la^{-1} x) + (\la^{-1} x) \cdot \nabla \rho (\la^{-1} x)\right),
\]
we find that $M(\la,x)$ can be written as
\[
\begin{aligned}
M(\la, x) & =\frac{\la^3-1}{6} \big( \rho(x)+x \cdot \nabla \rho(x) \big)
-\frac{\rho(x)}{2} +\frac{\rho(\la^{-1} x)}{2 \la} \\
&= \int_1^\la \frac{s^2}{2} \big( \rho(x)+x \cdot \nabla \rho(x) \big) \,d s
+\frac{1}{2} \int_1^{\la} \frac{d}{ds} \left( s^{-1} \rho (s^{-1} x ) \right) \,d s \\
&= \frac{1}{2} \int_1^\la s^2 \big( F(1, x)-F(s,x) \big) \,d s .
\end{aligned}
\]
Thus if $F(\la,x)$ is non-increasing with respect to $\la$ for every $x \in \R^3$,
it holds that $M(\la,x) \ge 0$ and hence
$R(\la,u) \ge 0$ for all $\la >0$,
which is a same situation to \cite{TC}.
However we cannot expect that $F(\la,x)$ is non-increasing in $\la$
for doping profiles.

Indeed by a direct calculation, $F(\la,x)$ is non-increasing in $\la$ 
if $\rho$ satisfies
\begin{equation} \label{eq:3.18}
4 \rho (x)+6 x \cdot \nabla \rho(x)
+x \cdot \left( D^2 \rho(x) x \right) 
\ge 0 \quad \text { for all } x \in \R^3.
\end{equation}
When $\rho$ is a Gaussian function $e^{-\alpha |x|^2}$ for $\alpha >0$,
one finds that
\[
2 \rho(x)+3 x \cdot \nabla \rho(x)
+\frac{1}{2} x \cdot \left(D^2 \rho(x) x\right) 
= \left(2 \alpha^2 |x|^4 -7 \alpha |x|^2+2 \right) e^{-\alpha |x|^2}.
\]
Hence no matter how we choose $\alpha$,
\ef{eq:3.18} fails to hold near the inflection point.
Moreover if we consider $\rho(x)= \frac{1}{1+|x|^r}$, we see that
\[
2\rho(x)+ 3 x \cdot \nabla \rho(x)
+\frac{1}{2} x \cdot \left(D^2 \rho(x) x\right) 
=\frac{1}{2 (1+ |x|^r)^3}
\left\{ (r-1)(r-4) |x|^{2 r} - (r^2+5 r-8 ) |x|^r+4 \right\}.
\]
Then it follows that
\[
\begin{aligned}
r>4 &\quad \Rightarrow \quad 
\text{\ef{eq:3.18} fails to hold near the inflection point}, \\
1 \leq r \leq 4 &\quad \Rightarrow \quad
\text{\ef{eq:3.18} fails to hold for large $|x|$}, \\
0<r<1 &\quad \Rightarrow \quad \rho \notin L^{\frac{6}{5}}(\R^3).
\end{aligned}
\]
In this sense, the assumption \ef{eq:1.5} and \ef{eq:3.18} 
seem to be inconsistent.
\end{remark}

Now using Lemma \ref{lem:3.7}, we can show the following.

\begin{lemma} \label{lem:3.9}
Suppose that $2<p<5$ and assume that
\[
e^2 \left( \| \rho \|_{\frac{6}{5}} +\| x \cdot \nabla \rho \|_{\frac{6}{5}}
+\| x \cdot (D^2 \rho x ) \|_{\frac{6}{5}} \right) \le \rho_0
\]
for sufficiently small $\rho_0>0$.

Then for any $u \in \HT \setminus \{0 \}$, 
there exists a unique $\la_u>0$ such that $u_{\la_u} \in \CM$.
Especially for $u \in \CM$,
\[
u_\la \in \CM \quad \text{if and only if} \quad \la_u=1. 
\]
\end{lemma}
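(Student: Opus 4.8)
The plan is to derive the statement from the energy inequality of Lemma~\ref{lem:3.7} by a symmetric comparison argument. Existence of $\la_u$ is already provided by Lemma~\ref{lem:3.1}, so the content is uniqueness, and I would first reduce it to the following assertion: \emph{if $v\in\CM$ and $v_\mu\in\CM$ for some $\mu>0$, then $\mu=1$.} Granting this, the composition rule $(u_\la)_\mu=u_{\la\mu}$ for the scaling $u_\la(x)=\la^2u(\la x)$ settles everything: if $u_{\la_1},u_{\la_2}\in\CM$, put $v:=u_{\la_1}\in\CM$; then $v_{\la_2/\la_1}=u_{\la_2}\in\CM$, so the assertion forces $\la_2=\la_1$. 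The ``especially'' part is then immediate since $u_1=u$, so for $u\in\CM$ the unique scaling parameter is $\la_u=1$, and $u_\la\in\CM$ if and only if $\la=1$.

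To prove the reduced assertion, note that $v\in\CM$ and $v_\mu\in\CM$ mean $J(v)=0$ and $J(v_\mu)=0$; in particular $J(v)\le0$ and $J(v_\mu)\le0$, so Lemma~\ref{lem:3.2} yields a constant $\delta_1>0$ independent of $e$ and $\rho$ with $\|v\|_{p+1}\ge\delta_1$ and $\|v_\mu\|_{p+1}\ge\delta_1$. After shrinking $\rho_0$ so that the hypothesis of \eqref{eq:3.13} holds with $\delta=\delta_1$, I would apply \eqref{eq:3.13} twice. First, with $u=v$ and $\la=\mu$, using $J(v)=0$:
\[
I(v)-I(v_\mu)\ \ge\ \frac{(1-\mu)^2\omega}{6}\,\|v\|_2^2+\frac{\alpha\delta_1^{p-1}}{12(p+1)}(1-\mu)^2\|v\|_{p+1}^2 .
\]
Second, with $u=v_\mu$ and $\la=1/\mu$, using $(v_\mu)_{1/\mu}=v$ and $J(v_\mu)=0$:
\[
I(v_\mu)-I(v)\ \ge\ \frac{(1-1/\mu)^2\omega}{6}\,\|v_\mu\|_2^2+\frac{\alpha\delta_1^{p-1}}{12(p+1)}(1-1/\mu)^2\|v_\mu\|_{p+1}^2 .
\]
If $\mu\ne1$, the right-hand sides are both strictly positive (as $v\ne0$ and $v_\mu\ne0$), giving $I(v)>I(v_\mu)$ and $I(v_\mu)>I(v)$ simultaneously, a contradiction; hence $\mu=1$.

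The only genuinely delicate point is the two-sided use of \eqref{eq:3.13}: one needs the lower bound $\|\cdot\|_{p+1}\ge\delta_1$ for \emph{both} $v$ and $v_\mu$ with the same constant independent of $e,\rho$, so that a single threshold $\rho_0$ works, and of course the fact that the doping remainder $R(\la,u)$ has already been absorbed into the estimate in Lemma~\ref{lem:3.7}. Once that inequality is available, the remaining argument is purely algebraic; the bookkeeping of the scaling composition and of the dependence of $\rho_0$ on the universal constant $\delta_1$ is routine, and I expect no further obstacle.
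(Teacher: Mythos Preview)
Your proposal is correct and follows essentially the same approach as the paper: existence from Lemma~\ref{lem:3.1}, then a contradiction argument applying the energy inequality \eqref{eq:3.13} symmetrically to two scalings in $\CM$ (using Lemma~\ref{lem:3.2} to secure the common lower bound $\|\cdot\|_{p+1}\ge\delta_1$). The paper works directly with $u_{\la_1},u_{\la_2}$ and adds the two resulting inequalities, whereas you first reduce via the composition rule $(u_\la)_\mu=u_{\la\mu}$ to the statement ``$v\in\CM$ and $v_\mu\in\CM\Rightarrow\mu=1$''; this is a cosmetic repackaging of the same argument.
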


\begin{proof}
By Lemma \ref{lem:3.1}, we know that there exists $\la_u>0$,
which is a maximum point of $f(\la)$ defined in \ef{eq:3.2},
such that $u_{\la_u} \in \CM$.
Thus it suffices to show that $f(\la)$ has a unique critical point for $\la>0$.

Now suppose by contradiction that there exist
$0<\la_1<\la_2$ such that $f'(\la_1)=f'(\la_2)=0$.
As we have observed in the proof of Lemma \ref{lem:3.1},
it holds that $J(u_\la)=\la f'(\la)$, which yields that
\begin{equation} \label{eq:3.19}
J(u_{\la_1})=J(u_{\la_2})=0 .
\end{equation}
Since $u_{\la_1}$, $u_{\la_2} \in \CM$, we can use \ef{eq:3.3} to show that
$\| u_{\la_1} \|_{p+1} \ge \delta_1$ and $\| u_{\la_2} \|_{p+1} \ge \delta_1$.
Then we are able to apply \ef{eq:3.13} if
$e^2 \left( \| \rho \|_{\frac{6}{5}} + \| x \cdot \nabla \rho \|_{\frac{6}{5}}
+ \| x \cdot (D^2 \rho x) \|_{\frac{6}{5}} \right)$ is sufficiently small.

Applying \ef{eq:3.13} with $u=u_{\la_1}$ and $\la= \frac{\la_2}{\la_1}$, we have
\[
I(u_{\la_1}) \ge I(u_{\la_2})
+\frac{\la_1^3-\la_2^3}{3 \la_1^3} J(u_{\la_1})
+\frac{(\la_1-\la_2)^2\omega}{6 \la_1^2} \| u_{\la_1} \|_2^2 
+\frac{\alpha \delta_1^{p-1} (\la_1-\la_2)^2}{12(p+1) \la_1^2} \| u_{\la_1} \|_{p+1}^2. 
\]
Similarly one gets
\[
I(u_{\la_2}) \ge I(u_{\la_1})
+\frac{\la_2^3-\la_1^3}{3 \la_2^3} J(u_{\la_2})
+\frac{(\la_2-\la_1)^2\omega}{6 \la_2^2} \| u_{\la_2} \|_2^2 
+\frac{\alpha \delta_1^{p-1} (\la_2-\la_1)^2}{12(p+1) \la_2^2} \| u_{\la_2} \|_{p+1}^2.
\]
Thus from \ef{eq:3.19}, it follows that 
\[
0 \ge \frac{\omega}{6} (\la_1-\la_2 )^2
\left( \frac{1}{\la_1^2} \| u_{\la_1} \|_2^2 +\frac{1}{\la_2^2} \| u_{\la_2} \|_2^2\right) 
+ \frac{\alpha \delta_1^{p-1}}{12(p+1)} (\la_1-\la_2 )^2
\left( \frac{1}{\la_1^2} \| u_{\la_1} \|_{p+1}^2+ \frac{1}{\la_2^2} \| u_{\la_2} \|_{p+1} ^2\right)
>0, 
\]
from which arrive at a contradiction and 
conclude that $f(\la)$ has a unique critical point.
\end{proof}

\begin{remark} \label{rem:3.10}
By Proposition \ref{prop:3.6} and Lemma \ref{lem:3.9}, 
we can obtain the following minimax characterization of $m$:
\[
m= \inf_{u \in \HT \setminus \{0 \}} \max_{\la>0} I(u_\la).
\] 

\end{remark}

\section{Existence of a ground state solution for $2<p<5$}

In this section, we establish the existence of a ground state solution of \ef{eq:1.1}.
For this purpose, we define the energy functional $I_{\infty}$ 
associated with \ef{eq:1.3} by 
\[
I_{\infty}(u) := \frac{1}{2} A(u) +\frac{\omega}{2} B(u) 
-\frac{1}{p+1} C(u)+e^2 D(u) \quad \text{for} \ u \in \HT.
\]
Indeed if $\rho(x) \to 0$ as $|x| \to \infty$, 
$I_{\infty}$ can be seen as a functional at infinity.
We define the ground state energy corresponding to \ef{eq:1.3} by
\[
m_{\infty} := \inf_{u \in \CS_{\infty}} I_{\infty}(u), \quad
\CS_{\infty} := \left\{ u \in \HT \mid I_{\infty}^{\prime}(u)=0 \right\}.
\]
Let us denote by $J_{\infty}$ the Nehari-Pohozaev functional for $I_{\infty}$, that is,
\[
J_{\infty}(u) := \frac{3}{2} A(u)+\frac{\omega}{2} B(u)
-\frac{2p-1}{p+1} C(u)+3e^2 D(u). 
\]
We also set
\[
\sigma_{\infty} := \inf_{u \in \CM_{\infty}} I_{\infty}(u), \quad 
\CM_{\infty} := \left\{ u \in \HT \setminus \{ 0 \} \mid 
J_{\infty}(u)=0 \right\}.
\]
Then by the result in \cite{AP}
with minor modifications considering $u \in \HT$, 
it follows that $\sigma_{\infty}= m_{\infty}$
and $\sigma_{\infty}$ is achieved by some $u_{\infty} \in \HT \setminus \{0 \}$
for $2<p<5$ and any $e>0$.
Moreover arguing as Lemma \ref{lem:3.7} (see also \cite[Corollary 3.3]{TC}),
we have
\begin{equation} \label{eq:4.1}
I_{\infty}(u) \ge I_{\infty}(u_\la) +\frac{1-\la^3}{3} J_{\infty}(u) \quad 
\text {for all } u \in \HT \ \ \text{and} \ \ \la>0,
\end{equation}
where $u_\la(x)=\la^2u(\la x)$.

To prove the attainability of $\sigma$, we need the following lemma.
Note that this is the only part where we crucially require \ef{eq:1.6}.

\begin{lemma} \label{lem:4.1}
Suppose that $2<p<5$. Assume further \ef{eq:1.5} and \ef{eq:1.6}.
Then it follows that
\[
\sigma < \sigma_{\infty} \quad \text{and} \quad m < m_{\infty}.
\]
\end{lemma}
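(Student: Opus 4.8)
The plan is to use the decomposition $I=I_\infty+2e^2E_1$ together with the sign of the doping term. From \ef{eq:2.3} and the definition of $I_\infty$ one has $I(u)=I_\infty(u)+2e^2E_1(u)$ for every $u\in\HT$, with $E_1(u)=-\tfrac14\intR S_0(u)\rho\,dx$. Since $S_0(u)(x)=\tfrac{1}{8\pi|x|}*|u|^2$ is strictly positive at every point as soon as $u\not\equiv0$, and since $\rho\ge0$, $\rho\not\equiv0$ by \ef{eq:1.6}, we get $E_1(u)<0$ for every $u\in\HT\setminus\{0\}$, hence the strict comparison $I(u)<I_\infty(u)$ on $\HT\setminus\{0\}$. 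This is precisely the point where \ef{eq:1.6} enters.

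First I would take a minimizer $u_\infty\in\CM_\infty$ for $\sigma_\infty$, which exists and satisfies $I_\infty(u_\infty)=\sigma_\infty=m_\infty$ and $J_\infty(u_\infty)=0$ by the result of \cite{AP} recalled above. By Lemma~\ref{lem:3.1} there is $\la_0=\la_{u_\infty}>0$ with $v:=(u_\infty)_{\la_0}\in\CM$, so that $\sigma\le I(v)$. Writing $I(v)=I_\infty(v)+2e^2E_1(v)$ and applying the scaling inequality \ef{eq:4.1} to $u_\infty$ with $\la=\la_0$, and using $J_\infty(u_\infty)=0$, we obtain $I_\infty(v)=I_\infty\big((u_\infty)_{\la_0}\big)\le I_\infty(u_\infty)=\sigma_\infty$. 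Since $v\not\equiv0$, the previous paragraph gives $2e^2E_1(v)<0$, and therefore
\[
\sigma\le I(v)=I_\infty(v)+2e^2E_1(v)\le\sigma_\infty+2e^2E_1(v)<\sigma_\infty,
\]
which is the first assertion. The second one, $m<m_\infty$, then follows from $m=\sigma$ (Proposition~\ref{prop:3.6}, using the attainment of $\sigma$ proved in Section~4) and $m_\infty=\sigma_\infty$.

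The argument is otherwise routine, being just a combination of the decomposition $I=I_\infty+2e^2E_1$, the scaling construction of Lemma~\ref{lem:3.1}, and the comparison inequality \ef{eq:4.1}. The one point that requires care is the \emph{strict} inequality $E_1(v)<0$: it rests on the pointwise positivity of the Newtonian potential of the nonzero, nonnegative density $|v|^2$, combined with $\rho\ge0$, $\rho\not\equiv0$; without \ef{eq:1.6} one would only get $\sigma\le\sigma_\infty$, which is insufficient for the compactness argument of Section~4.
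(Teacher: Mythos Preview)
Your proof of $\sigma<\sigma_\infty$ is correct and is essentially the paper's argument: pick a ground state $u_\infty$ for $I_\infty$, use Lemma~\ref{lem:3.1} to project a rescaling into $\CM$, apply \ef{eq:4.1} together with $J_\infty(u_\infty)=0$, and then use the strict sign $E_1<0$ coming from \ef{eq:1.6}. The paper writes exactly the same chain $\sigma\le I(\tilde u_\infty)<I_\infty(\tilde u_\infty)\le I_\infty(u_\infty)=\sigma_\infty$.

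There is, however, a circularity in your justification of $m<m_\infty$. You invoke $m=\sigma$ via Proposition~\ref{prop:3.6} together with the \emph{attainment} of $\sigma$ from Proposition~\ref{prop:4.2}; but the proof of Proposition~\ref{prop:4.2} relies on the present Lemma~\ref{lem:4.1} (the strict inequality $\sigma<\sigma_\infty$ is used twice in Step~1 to rule out the case $u_0\equiv0$). So you cannot cite Proposition~\ref{prop:4.2} inside the proof of Lemma~\ref{lem:4.1}. The paper, in fact, proves only $\sigma<\sigma_\infty$ in the lemma and does not argue $m<m_\infty$ there at all. This is harmless in the global logic because Proposition~\ref{prop:4.2} needs only $\sigma<\sigma_\infty$: one first proves $\sigma<\sigma_\infty$, then Proposition~\ref{prop:4.2}, and only afterwards concludes $m=\sigma<\sigma_\infty=m_\infty$. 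If you want $m<m_\infty$ to appear in the lemma, you should make this two-step structure explicit rather than presenting it as part of a self-contained proof; as written, your argument is circular.
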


\begin{proof}
First we observe from \ef{eq:1.6} that
\[
I(u ) < I_{\infty}(u) \quad \text { for all } u \in \HT\setminus \{ 0 \}
\]
because $E_1(u) <0$ by \ef{eq:2.2}.

Now let $u_{\infty}$ be a ground state solution for \ef{eq:1.3}.
By Lemma \ref{lem:3.1}, 
there exists $\la_{\infty}>0$ such that 
$\tilde{u}_{\infty}(x):=\la_{\infty}^2 u_{\infty}(\la_{\infty} x) \in \CM$.
Then from \ef{eq:4.1} and $J_{\infty}(u_{\infty})=0$, we obtain
\[
\sigma \le I(\tilde{u}_{\infty}) < I_{\infty} (\tilde{u}_{\infty} ) 
\le I_{\infty}(u_{\infty} )= \sigma_{\infty},
\]
which ends the proof.
\end{proof}

\begin{remark} \label{rem:4.2}
When we drop \ef{eq:1.6}, we cannot determine the sign of
\[
E_1(u) = \frac{1}{4} \intR S_1(x) |u|^2 \,dx
= - \frac{1}{32 \pi} \intR \intR \frac{\rho(y) |u(x)|^2}{|x-y|} \,dx \,dy,
\]
causing that the estimate $\sigma < \sigma_{\infty}$ is completely nontrivial.
As we will see in the proof of Proposition \ref{prop:4.2} below,
this estimate is a key ingredient of recovering the compactness of minimizing sequences for $\sigma$.

If we could assume that $S_1(x)$ decays exponentially at infinity,
we still have a possibility to recover the compactness as in \cite{AW2}.
Under the assumption \ef{Ass}, one can show that 
$S_1(x) \to 0$ as $|x| \to \infty$; see \cite[Lemma 2.4]{CW4}.
However no matter how fast $\rho(x)$ decays at infinity,
$S_1(x)$ cannot decay faster than $\frac{1}{|x|}$,
yielding that the assumption in \cite{AW2} is never satisfied.
For the moment, we don't know whether Lemma \ref{lem:4.1} holds without \ef{eq:1.6}.
\end{remark}

Now we are ready to prove the following result.

\begin{proposition} \label{prop:4.2}
Suppose that $2<p<5$. Assume further \ef{eq:1.5}, \ef{eq:1.6} and 
\[
e^2 \left(\|\rho\|_{\frac{6}{5}}+\|x \cdot \nabla \rho\|_{\frac{6}{5}}
+\| x \cdot (D^2 \rho x ) \|_{\frac{6}{5}}\right) \le \rho_0
\]
for sufficiently small $\rho_0>0$.

Then there exists $u_0 \in \HT \setminus \{0 \}$ such that
\[
I(u_0)= \sigma \quad \text{and} \quad J(u_0)=0.
\]
\end{proposition}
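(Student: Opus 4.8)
The plan is to realize $\sigma$ as the energy of a minimizer by a concentration--compactness argument, in which the strict inequality $\sigma<\sigma_\infty$ of Lemma~\ref{lem:4.1} rules out escape of mass to infinity and the energy inequality of Lemma~\ref{lem:3.7} rules out a genuine splitting. I take a minimizing sequence $\{u_n\}\subset\CM$, so $J(u_n)=0$ and $I(u_n)\to\sigma$. By Lemma~\ref{lem:3.3}, $\alpha_1\|u_n\|^2\le I(u_n)$, so $\{u_n\}$ is bounded in $\HT$; since $J(u_n)\le0$, \eqref{eq:3.3} gives $\|u_n\|_{p+1}\ge\delta_1>0$. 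In particular the sequence does not vanish, since otherwise $u_n\to0$ in $L^{p+1}(\R^3)$, contradicting the last bound. Hence, up to a subsequence, there are $R,\eta>0$ and $\{y_n\}\subset\R^3$ with $\int_{B_R(y_n)}|u_n|^2\,dx\ge\eta$, and by local compactness $v_n:=u_n(\cdot+y_n)\rightharpoonup v$ in $\HT$ with $v\neq0$.

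Next I show $\{y_n\}$ is bounded. If not, $|y_n|\to\infty$, and since $v_n$ is bounded in $\HT$ with its relevant mass escaping to infinity while $\rho,\ x\cdot\nabla\rho\in L^{6/5}(\R^3)$, the localized terms do not see this mass and $E_1(u_n)\to0$, $E_2(u_n)\to0$; hence $I_\infty(u_n)=I(u_n)-2e^2E_1(u_n)\to\sigma$ and $J_\infty(u_n)=J(u_n)+2e^2E_1(u_n)-e^2E_2(u_n)\to0$. Choosing $\tau_n>0$ with $(u_n)_{\tau_n}\in\CM_\infty$ (which exists by the $I_\infty$-analogue of Lemma~\ref{lem:3.1}), the bounds $\|u_n\|\le C$ and $\|u_n\|_{p+1}\ge\delta_1$ keep $\tau_n$ in a compact subset of $(0,\infty)$, so by \eqref{eq:4.1},
\[
\sigma_\infty\le I_\infty\big((u_n)_{\tau_n}\big)\le I_\infty(u_n)-\tfrac{1-\tau_n^3}{3}J_\infty(u_n)=\sigma+o(1),
\]
contradicting Lemma~\ref{lem:4.1}. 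Thus, after replacing $u_n$ by $u_n(\cdot+y_n)$, we may assume $u_n\rightharpoonup u_0$ in $\HT$ with $u_0\neq0$.

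Put $w_n:=u_n-u_0\rightharpoonup0$. By Brezis--Lieb for $A,B,C$, by Lemma~\ref{lem:2.3} for $D$ and by Lemma~\ref{lem:2.4} for $E_1,E_2$,
\[
I(u_n)=I(u_0)+I_\infty(w_n)+o(1),\qquad J(u_n)=J(u_0)+J_\infty(w_n)+o(1),
\]
so $J_\infty(w_n)\to-J(u_0)$ and, along a subsequence on which $A(w_n),B(w_n),C(w_n),D(w_n)$ converge, $\ell:=\lim I_\infty(w_n)=\sigma-I(u_0)$ exists. I use throughout the pointwise identity $I_\infty(z)-\tfrac13J_\infty(z)=\tfrac{\omega}{3}B(z)+\tfrac{2(p-2)}{3(p+1)}C(z)\ge0$ (valid for all $z\in\HT$ since $p>2$), hence $I_\infty(w_n)\ge\tfrac13J_\infty(w_n)$. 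If $J(u_0)=0$, then $u_0\in\CM$, so $I(u_0)\ge\sigma$ and $\ell\le0$; together with $\liminf I_\infty(w_n)\ge\tfrac13\lim J_\infty(w_n)=0$ this forces $\ell=0$, hence $B(w_n),C(w_n)\to0$, and then $J_\infty(w_n)\to0$ forces $A(w_n),D(w_n)\to0$, i.e.\ $w_n\to0$ in $\HT$. If $J(u_0)<0$, then $\|u_0\|_{p+1}\ge\delta_1$ by \eqref{eq:3.3} and $\la_{u_0}<1$ by Lemma~\ref{lem:3.9}; applying \eqref{eq:3.13} to $u_0$ at $\la=\la_{u_0}$ and using $I\big((u_0)_{\la_{u_0}}\big)\ge\sigma$ gives $I(u_0)>\sigma+\tfrac{1-\la_{u_0}^3}{3}J(u_0)$, while $\sigma=I(u_0)+\ell\ge I(u_0)-\tfrac13J(u_0)$; subtracting, $\sigma>\sigma-\tfrac{\la_{u_0}^3}{3}J(u_0)$, and since $J(u_0)<0$ the right-hand side exceeds $\sigma$, a contradiction.

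The remaining case $J(u_0)>0$ is, I expect, the main obstacle. Then $J_\infty(w_n)\to-J(u_0)<0$, so eventually $J_\infty(w_n)<0$; the $I_\infty$-version of \eqref{eq:3.3} then shows $\|w_n\|_{p+1}$ is bounded below, so $w_n\not\to0$ and $w_n$ is non-vanishing. Since $w_n\rightharpoonup0$, its concentration points escape to infinity, so $w_n(\cdot+z_n)\rightharpoonup\phi\neq0$ with $|z_n|\to\infty$; iterating a Brezis--Lieb decomposition yields finitely many nonzero profiles $\phi_1,\dots,\phi_k$ ($k\ge1$) with $I(u_n)=I(u_0)+\sum_jI_\infty(\phi_j)+o(1)$ and $J(u_0)+\sum_jJ_\infty(\phi_j)=o(1)$. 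One then applies \eqref{eq:3.13}–type estimates to $u_0$ and \eqref{eq:4.1} to each $\phi_j$ rescaled onto $\CM_\infty$, using $I\big((u_0)_{\la_{u_0}}\big)\ge\sigma$, $I_\infty\big((\phi_j)_{\la_{\phi_j}}\big)\ge\sigma_\infty=m_\infty>0$, the constraint $J(u_0)+\sum_jJ_\infty(\phi_j)=o(1)$, and the smallness of $e^2\big(\|\rho\|_{6/5}+\|x\cdot\nabla\rho\|_{6/5}+\|x\cdot(D^2\rho x)\|_{6/5}\big)$, to reach a contradiction with $\sigma<\sigma_\infty$; this rules out $k\ge1$, so again $w_n\to0$ in $\HT$. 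In every case, along a subsequence $u_n\to u_0$ in $\HT$, whence $I(u_0)=\lim I(u_n)=\sigma$ and $J(u_0)=\lim J(u_n)=0$, which proves the proposition.
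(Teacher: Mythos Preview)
Your case $J(u_0)>0$ is where the argument actually breaks down. You acknowledge this is ``the main obstacle'' and invoke a profile decomposition, but the sketch is not a proof: you do not show the remainder in the decomposition vanishes, and the final contradiction---combining rescalings of $u_0$ and of each profile $\phi_j$ under the constraint $J(u_0)+\sum_j J_\infty(\phi_j)=0$---is asserted, not derived; there is no mechanism forcing the various scaling exponents $\la_{u_0},\la_{\phi_j}$ to cooperate. The paper avoids all of this with a one-line trick you overlooked: since $J(w_n)=-J(u_0)+o(1)<0$, Lemma~\ref{lem:3.2} gives $\|w_n\|_{p+1}\ge\delta_1$, so you may rescale $w_n$ \emph{itself} onto $\CM$ (not $\CM_\infty$) and apply \eqref{eq:3.13} directly to $w_n$:
\[
\sigma-K(u_0)+o(1)=K(w_n)=I(w_n)-\tfrac13 J(w_n)\ \ge\ I\big((w_n)_{\la_n}\big)-\tfrac{\la_n^3}{3}J(w_n)\ \ge\ \sigma,
\]
forcing $K(u_0)\le 0$, which contradicts $K(u)>0$ for $u\ne0$ under the smallness hypothesis. (Rescaling onto $\CM_\infty$ and using \eqref{eq:4.1} with $\sigma_\infty>\sigma$ works equally well.) No profile decomposition is needed.

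Two smaller issues. In Step~1, the implication ``$|y_n|\to\infty\Rightarrow E_1(u_n),E_2(u_n)\to0$'' is unjustified: $u_n$ could retain mass near the origin. The clean order is to pass first to a weak limit $u_0$; if $u_0=0$, Lemma~\ref{lem:2.4} gives $E_i(u_n)\to0$ immediately, and then your rescaling argument $\sigma_\infty\le I_\infty\big((u_n)_{\tau_n}\big)\le\sigma+o(1)$ goes through. Also, you cannot ``replace $u_n$ by $u_n(\cdot+y_n)$'' since the problem is not translation invariant; rather, $\{y_n\}$ bounded yields $u_n\rightharpoonup u_0\neq0$ directly along a subsequence. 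Finally, note that the proposition only asks for $I(u_0)=\sigma$ and $J(u_0)=0$; the paper gets these from $J(u_0)\le0$ via Fatou on $K(u_n)$ and \eqref{eq:3.13}, without proving strong convergence, so your cases $J(u_0)=0$ and $J(u_0)<0$ do more work than necessary.
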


\begin{proof}
Let $\{ u_n \} \subset \CM$ be a minimizing sequence for $\sigma$, that is,
\begin{equation} \label{eq:4.2}
I(u_n) = \sigma +o(1) \quad \text {and} \quad J(u_n) =0.
\end{equation}
Then by Lemma \ref{lem:3.3}, it follows that
\[
\sigma +o(1) = I(u_n) \ge \alpha_1 \| u\|^2,
\]
which implies that $\| u_n \|$ is bounded.
Thus passing to a subsequence, 
we may assume that $u_n \rightharpoonup u_0$ in $\HT$
for some $u_0 \in \HT$.
We divide the proof into two steps.

\smallskip
\noindent
\textbf{Step 1}: We claim that $u_0 \not\equiv 0$.

Suppose by contradiction that $u_0 \equiv 0$ so that 
$u_n \rightharpoonup 0$ in $\HT$.
Then by Lemma \ref{lem:2.4}, it follows that
\[
E_1(u_n) \rightarrow 0 \quad \text { and }\quad 
E_2(u_n) \rightarrow 0 \quad \text {as} \ n \to \infty.
\]
Since we can write
\[
I(u)=I_{\infty}(u)+2e^2E_1(u) \quad \text {and} \quad 
J(u)=J_{\infty}(u) -2e^2 E_1(u)+e^2 E_2(u),
\]
we have from \ef{eq:4.2} that 
\begin{equation} \label{eq:4.3}
I_{\infty}(u_n) \rightarrow \sigma \quad \text{and} \quad 
J_{\infty}(u_n) \rightarrow 0 \quad \text{as} \ n \rightarrow \infty.
\end{equation}
Arguing as in Lemma \ref{lem:3.2}, we also deduce that $\| u_n \|_{p+1} \ge \delta$
for some $\delta>0$ independent of $e$, $\rho$ and $n \in \N$.

Now we apply the concentration compactness principle \cite{L1, Wil}
to show that there exist $\tilde{\delta}>0$ and $\{ y_n \} \subset \R^3$ such that
\[
\int_{B_1(y_n)} |u_n(x)|^{p+1} \,d x \geq \tilde{\delta}.
\]
Letting $\hat{u}_n(x) := u_n(x+y_n)$, we have \[
\| \hat{u}_n \| = \| u_n \| \quad \text{and} \quad
\int_{B_1(0)} |\hat{u}_n(x)|^{p+1} \,d x \ge \tilde{\delta}.
\]
Especially, there exists $\hat{u} \in \HT \setminus \{0 \}$ such that
$\hat{u}_n \rightharpoonup \hat{u}$ in $\HT$.
Moreover from \ef{eq:4.3}, one also finds that
\begin{equation} \label{eq:4.4}
I_{\infty}(\hat{u}_n)= I_{\infty}(u_n) = \sigma +o(1) \quad \text{and} \quad 
J_{\infty}(\hat{u}_n)= J_{\infty}(u_n) = o(1).
\end{equation}

Let us put $v_n:= \hat{u}_n- \hat{u}$. 
Then by the Brezis-Lieb lemma \cite{BrLi} and Lemma \ref{lem:2.3}, it follows that
\[
I_{\infty}(\hat{u}_n) = I_{\infty}(\hat{u})+I_{\infty}(v_n)+o(1) \quad \text{and} \quad 
J_{\infty}(\hat{u}_n) = J_{\infty}(\hat{u})+J_{\infty}(v_n)+o(1).
\]
We also define
\[
K_{\infty}(u) := I_{\infty}(u) -\frac{1}{3} J_{\infty}(u) 
= \frac{\omega}{3} B(u) +\frac{2(p-2)}{3(p-1)} C(u).
\]
Then from \ef{eq:4.4}, one finds that
\begin{equation} \label{eq:4.5}
K_{\infty}(v_n) = \sigma -K_{\infty}(\hat{u})+o(1) \quad \text{and} \quad
J_{\infty}(v_n)= -J_{\infty}(\hat{u})+o(1).
\end{equation}
If there exists a subsequence $\{ v_{n_j} \} \subset \{ v_n \}$ such that
$v_{n_j}=0$, then passing to a limit along this subsequence,
it holds from \ef{eq:4.5} that 
$K_{\infty}(\hat{u})=\sigma$ and $J_{\infty}(\hat{u})=0$.
This implies that
\[
\sigma_{\infty} \le I_{\infty}(\hat{u}) 
= K_\infty(\hat{u})+\frac{1}{3} J_{\infty}(\hat{u})= \sigma,
\]
which is absurd by Lemma \ref{lem:4.1}.
Thus we may assume that $v_n \ne 0$.
Arguing as Lemma \ref{lem:3.9} 
(see also \cite[Lemma 3.3]{R}, \cite[Lemma 2.4]{TC}), 
there exists a unique $\hat{\la}_n >0$ such that 
$\hat{\phi}_n(x) := (\hat{\la}_n)^2 v_n (\hat{\la}_n x) \in \CM_{\infty}$.

Next we prove that $J_{\infty}(\hat{u}) \le 0$.
Suppose by contradiction that $J_{\infty}(\hat{u})>0$.
Then from \ef{eq:4.5}, it follows that $J_{\infty}(v_n) \le 0$
and hence $\| v_n \|_{p+1} \ge \delta_1$ by Lemma \ref{lem:3.2}.
Using \ef{eq:3.13} with $\rho \equiv 0$ and \ef{eq:4.5}, one deduces that
\[
\begin{aligned}
\sigma-K_{\infty}(\hat{u})+o(1)
= K_{\infty}(v_n) 
&=I_{\infty}(v_n)-\frac{1}{3} J_{\infty}(v_n) \\
&\ge I_{\infty} (\hat{\phi}_n) -\frac{(\hat{\la}_n)^3}{3} J_{\infty}(v_n) \\
&\ge \sigma_{\infty}-\frac{(\hat{\la}_n)^3}{3} J_\infty(v_n) \ge \sigma_{\infty}, 
\end{aligned}
\]
which yields that
\[
\sigma_{\infty} \le \sigma -K_{\infty}(\hat{u}) 
< \sigma_{\infty} -K_{\infty}(\hat{u}) \quad \text{and thus} \quad K_{\infty}(\hat{u})<0.
\]
This is a contradiction to the fact $K_{\infty}(u)>0$ 
for any $u \in \HT \setminus \{0 \}$
and hence $J_{\infty}(\hat{u}) \le 0$.

Now by Lemma \ref{lem:3.9}, there exists a unique $\hat{\la}>0$ such that
$\hat{\phi}(x) := (\hat{\la})^2 \hat{u}(\hat{\la} x) \in \CM_{\infty}$.
Then from \ef{eq:3.13} with $\rho \equiv 0$, \ef{eq:4.4} 
and by the Fatou lemma, we obtain
\[
\begin{aligned}
\sigma &= \lim_{n \rightarrow \infty}
\left\{ I_{\infty}(\hat{u}_n)-\frac{1}{3} J_{\infty} (\hat{u}_n) \right\}
= \lim_{n \rightarrow \infty} K_{\infty}(\hat{u}_n) 
\ge K_{\infty}(\hat{u}) \\
&= I_{\infty}(\hat{u})-\frac{1}{3} J_{\infty}(\hat{u}) \\
&\ge I_{\infty}(\hat{\phi}) -\frac{(\hat{\la})^3}{3} J_{\infty}(\hat{u}) \\
&\ge \sigma_{\infty}-\frac{(\hat{\la})^3}{3} J_{\infty}(\hat{u}) \ge \sigma_{\infty}.
\end{aligned}
\]
This is a contradiction to Lemma \ref{lem:4.1} and hence 
we conclude that $u_0 \not\equiv 0$.

\smallskip
\noindent
\textbf{Step 2}: We prove that $I(u_0)=\sigma$ and $J(u_0)=0$.

Let us define $w_n:= u_n- u_0$.
By Lemma \ref{lem:2.3}, Lemma \ref{lem:2.4} and the Brezis-Lieb lemma, we have
\[
I(u_n)= I(u_0)+I(w_n) +o(1) \quad \text{and} \quad
J(u_n)=J(u_0)+J(w_n) +o(1).
\]
We also put
\begin{align*} 
K(u) &:= I(u)-\frac{1}{3} J(u) \\
&= \frac{\omega}{3} B(u) +\frac{2(p-2)}{3(p+1)} C(u)
+\frac{8}{3} e^2 E_1(u)-\frac{1}{3} e^2 E_2(u)
\quad \text{for} \ u \in \HT. \notag
\end{align*}
Then from \ef{eq:4.2}, it holds that
\begin{equation} \label{eq:4.6}
K(w_n) = \sigma -K(u_0)+o(1) \quad \text{and} \quad
J(w_n) = - J(u_0)+o(1).
\end{equation}
If there exists a subsequence $\{ w_{n_j} \} \subset \{ w_n \}$ such that
$w_{n_j} =0$, then passing to a limit along this subsequence,
we arrive at $K(u_0)=\sigma$ and $J(u_0)=0$. 
This implies that $I(u_0)=\sigma$ and hence we conclude.
Thus we may assume that $w_n \ne 0$.

Next we show that $J(u_0) \le 0$.
Indeed if $J(u_0)>0$, it follows from \ef{eq:4.6} that
$J(w_n) \le 0$ and hence $\| w_n \|_{p+1} \ge \delta_1$ by Lemma \ref{lem:3.2}.
Then by Lemma \ref{lem:3.9}, there exists a unique $\la_n>0$ such that
$\phi_n(x) := \la_n^2 w_n(\la_n x) \in \CM$.
Using \ef{eq:3.13} and \ef{eq:4.6}, we deduce that
\[
\begin{aligned}
\sigma -K(u_0)+o(1) = K(w_n) &= I(w_n)-\frac{1}{3} J(w_n) \\
&\ge I(\phi_n)-\frac{\la_n^3}{3} J(w_n) \\
&\ge \sigma -\frac{\la_n^3}{3} J(w_n) \ge \sigma,
\end{aligned}
\]
yielding that $K(u_0) \le 0$.
However this is a contradiction because $K(u)>0$ 
for any $u \in \HT \setminus \{ 0 \}$ by Lemma \ref{lem:2.2}
provided that $e^2( \| \rho\|_{\frac{6}{5}} + \| x \cdot \nabla \rho \|_{\frac{6}{5}})$
is sufficiently small,
from which we conclude that $J(u_0) \le 0$.

Now using Lemma \ref{lem:3.9} again, 
there exists a unique $\la_0>0$ such that $\phi_0(x) := \la_0^2 u_0(\la_0 x) \in \CM$.
Moreover since $J(u_0) \le 0$, we are able to use \ef{eq:3.13} by Lemma \ref{lem:3.2}.
Then from \ef{eq:4.2}, Lemma \ref{lem:2.4} and by the Fatou lemma,
one finds that
\begin{align} \label{eq:4.7}
\sigma &= \lim_{n \to \infty} \left\{ I(u_n) - \frac{1}{3} J(u_n) \right\}
= \lim_{n \to \infty} K(u_n) \ge K(u_0) \notag \\
&= I(u_0) -\frac{1}{3} J(u_0) \notag \\
&\ge I(\phi_0) -\frac{\la_0^3}{3} J(u_0) \notag \\
&\ge \sigma -\frac{\la_0^3}{3} J(u_0) \ge \sigma, 
\end{align}
from which we get $K(u_0)=\sigma$.
Moreover if $J(u_0)<0$, \ef{eq:4.7} leads a contradiction.
Thus it follows that $J(u_0)=0$ and hence $I(u_0)=\sigma$.
This completes the proof.
\end{proof}

\begin{remark} \label{rem:4.3}
As we have observed in the proof of Proposition \ref{prop:4.2},
\[
\begin{aligned}
K(u) &= \frac{\omega}{3} B(u) + \frac{2(p-2)}{3(p+1)} C(u)
+\frac{8}{3} e^2 E_1(u) -\frac{1}{3} e^2 E_2(u) \\
&= \intR \left\{ 
\frac{\omega}{3}|u(x)|^2 +\frac{2(p-2)}{3(p+1)} |u(x)|^{p+1} 
-\frac{e^2}{6} S_0(u)(x) \big( \rho(x)+x \cdot \nabla \rho(x) \big) \right\} \,d x
\end{aligned}
\]
plays an important role.
Especially it is natural to apply the concentration compactness principle
to the function
\[
\rho_n(x) := 
\frac{\omega}{3}|u_n(x)|^2 +\frac{2(p-2)}{3(p+1)} |u_n(x)|^{p+1} 
-\frac{e^2}{6} S_0(u_n)(x) \big( \rho(x)+x \cdot \nabla \rho(x) \big)
\]
for a minimizing sequence $\{ u_n \} \subset \CM$.
However we don't know whether $\rho_n(x) \ge 0$ for all $x \in \R^3$,
although one knows that $K(u_n) \ge 0$ 
if $e^2( \| \rho\|_{\frac{6}{5}} + \| x \cdot \nabla \rho \|_{\frac{6}{5}})$
is small by Lemma \ref{lem:2.2}.

If $\rho$ satisfies
\begin{equation} \label{eq:4.8}
\rho(x)+ x \cdot \nabla \rho(x) \le 0 \quad \text {for all} \ x \in \R^3,
\end{equation} 
it follows that $\rho_n \ge 0$ on $\R^3$.
However the condition \ef{eq:4.8} seems to be inconsistent with
\ef{eq:1.5} and \ef{eq:1.6}.
For example, if we consider the Gaussian function 
$\rho(x)=e^{-\alpha |x|^2}$ for $\alpha >0$, we have
\[
\rho(x) +x \cdot \nabla \rho(x) 
=\left( 1-2 \alpha|x|^2\right) e^{-\alpha|x|^2}.
\]
Thus no matter how we choose $\alpha$, \ef{eq:4.8} fails to hold near the origin.
\end{remark}

\begin{proof}[Proof of Theorem \ref{thm:1.1}]
By Proposition \ref{prop:3.6} and Proposition \ref{prop:4.2}, 
there exists $u_0 \in \HT \setminus \{ 0 \}$ such that
\begin{equation} \label{eq:4.9}
I(u_0)= \sigma =m \quad \text{and} \quad J(u_0)=0,
\end{equation}
namely, $u_0$ is a ground state solution of \ef{eq:1.1}.
We claim that 
$u_0$ can be assumed to be real-valued up to phase shift.

For this purpose, we argue as in \cite{AW} and show that
\begin{equation} \label{eq:4.10}
\big| \nabla |u_0| \big| = | \nabla u_0 | \quad \text {a.e. in} \ \mathbb{R}^3.
\end{equation}
Indeed suppose by contradiction that 
$\mathcal{L} \left( \left\{ x \in \R^3 \mid 
\big| \nabla |u_0(x)| \big| < | \nabla u_0(x) | \right\} \right)>0$,
where $\mathcal{L}(A)$ is the Lebesgue measure for the set $A \subset \R^3$.
Then it follows that
\begin{equation} \label{eq:4.11}
\sigma = I(u_0) > I(|u_0|), \quad 
0 = J(u_0) > J(|u_0|) \quad \text {and} \quad 
K(u_0) > K(|u_0|).
\end{equation}
Moreover by Lemma \ref{lem:3.9}, there exists a unique $\la_0>0$ such that
$v_0(x) := \la_0^2 |u_0|(\la_0 x) \in \CM$.
Then from \ef{eq:3.13}, \ef{eq:4.9} and \ef{eq:4.11}, we find that
\[
\begin{aligned}
\sigma &= I(u_0)-\frac{1}{3} J(u_0) = K(u_0) > K(|u_0|) \\
&= I(|u_0|)-\frac{1}{3} J(|u_0|) \\
&\ge I(v_0)-\frac{\la_0^3}{3} J(|u_0|) 
> I(v_0) \ge \sigma,
\end{aligned}
\]
which is a contradiction and hence \ef{eq:4.10} holds.

Now using the polar form, we can write 
$u_0(x)= |u_0(x)| e^{i \eta(x)}$.
Then a direct calculation shows that
\[
| \nabla u_0|^2= \big| \nabla |u_0| \big|^2+|u_0|^2|\nabla \eta|^2,
\]
yielding that $\nabla \eta \equiv 0$ a.e. in $\R^3$ from \ef{eq:4.10}.
This implies that $u_0(x)= e^{i \theta} |u_0(x)|$ for some $\theta \in \R$
and hence $u_0$ can be assumed to be real-valued 
up to phase shift.
\end{proof}

\section{Relation between action GSS and energy GSS}

In this section, we investigate the relation between
action ground state solutions and 
energy ground state solutions of \ef{eq:1.1}. 

For given $\mu>0$, let $u_{\mu} \in \HT$ be an energy ground state solution
of \ef{eq:1.1}, that is,
\[
\begin{aligned}
E(u_{\mu}) &= \inf_{B(\mu)} E(u), \quad 
B_\mu =\left\{ u \in \HT \mid \| u \|_2^2 = \mu \right\}, \\
E(u) &:= \frac{1}{2} A(u)-\frac{1}{p+1} C(u)+e^2 D(u) + 2e^2 E_1(u).
\end{aligned}
\]
By the result in \cite{CW4}, if 
\ef{Ass}-\ef{ASS} is satisfied, 
$2<p< \frac{7}{3}$, $\mu > 2 \cdot 2^{\frac{1}{2p-4}} \mu^*$ and 
$e^2 \left( \| \rho\|_{\frac{6}{5}}+ \| x \cdot \nabla \rho \|_{\frac{6}{5}} \right) 
\le \rho_0$ for sufficiently small $\rho_0>0$, 
$u_{\mu}$ exists and the corresponding Lagrange multiplier 
$\omega= \omega_{\mu}$ is positive.
Here $\mu^*$ is the threshold value for $c_{\infty}(\mu)$ defined in \ef{eq:1.8}.
Especially $u_{\mu}$ is a nontrivial solution of \ef{eq:1.1} with 
$\omega= \omega_{\mu}$.

To clarify the dependence with respect to $\omega$, we write
$I=I_{\omega}$, $J=J_{\omega}$, $\CS= \CS_{\omega}$ and $m=m_{\omega}$.
Under these preparations, we have the following result.

\begin{proposition} \label{prop:5.1}
Let $\mu > 2 \cdot 2^{\frac{1}{2p-4}} \mu^*$ be given 
and suppose that $2<p<\frac{7}{3}$.
Under the assumptions in Theorem \ref{thm:1.1} and \ef{Ass}, 
the energy ground state solution $u_{\mu}$ is 
an action ground state solution of \ef{eq:1.1} with $\omega=\omega_{\mu}$.
\end{proposition}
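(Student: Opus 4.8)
The plan is to prove the two inequalities $I_{\omega_\mu}(u_\mu)\ge m_{\omega_\mu}$ and $I_{\omega_\mu}(u_\mu)\le m_{\omega_\mu}$, where throughout $\omega=\omega_\mu>0$ is the Lagrange multiplier of $u_\mu$. Since $u_\mu$ is a nontrivial solution of \ef{eq:1.1} with $\omega=\omega_\mu$ (as recalled above from \cite{CW4}), Lemma \ref{lem:2.5} gives $u_\mu\in\CM_{\omega_\mu}$, and $u_\mu$ is in particular a nontrivial critical point of $I_{\omega_\mu}$. By Proposition \ref{prop:3.6} together with Proposition \ref{prop:4.2} --- which apply because, by hypothesis, the smallness condition of Theorem \ref{thm:1.1} is in force for the value $\omega=\omega_\mu$ --- we have $\sigma_{\omega_\mu}=m_{\omega_\mu}$, so that
\[
I_{\omega_\mu}(u_\mu)\ge \inf_{v\in\CS_{\omega_\mu}}I_{\omega_\mu}(v)=m_{\omega_\mu}=\sigma_{\omega_\mu}.
\]
It therefore remains to establish the reverse inequality $I_{\omega_\mu}(u_\mu)\le\sigma_{\omega_\mu}$.

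For the reverse inequality I would combine the $L^2$-preserving rescaling $v_\la(x)=\la^2 v(\la x)$, which satisfies $\|v_\la\|_2^2=\la\|v\|_2^2$, with the elementary identity $I_{\omega}(u)=E(u)+\tfrac{\omega}{2}\|u\|_2^2$ (immediate from \ef{eq:2.3} and the definition of $E$). Fix an arbitrary $v\in\CM_{\omega_\mu}$, set $\la_v:=\mu/\|v\|_2^2>0$, and let $\tilde v:=v_{\la_v}$, so that $\|\tilde v\|_2^2=\mu$, that is, $\tilde v\in B(\mu)$. Since $u_\mu$ minimizes $E$ over $B(\mu)$ and both $\tilde v$ and $u_\mu$ lie on $B(\mu)$,
\[
I_{\omega_\mu}(\tilde v)=E(\tilde v)+\tfrac{\omega_\mu}{2}\mu\ge E(u_\mu)+\tfrac{\omega_\mu}{2}\mu=I_{\omega_\mu}(u_\mu).
\]
On the other hand, because $v\in\CM_{\omega_\mu}$, Lemma \ref{lem:3.1} and Lemma \ref{lem:3.9} (see also Remark \ref{rem:3.10}) imply that the fibering map $\la\mapsto I_{\omega_\mu}(v_\la)$ attains its global maximum at $\la=1$; in particular $I_{\omega_\mu}(\tilde v)=I_{\omega_\mu}(v_{\la_v})\le I_{\omega_\mu}(v)$. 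Combining the last two displays gives $I_{\omega_\mu}(v)\ge I_{\omega_\mu}(u_\mu)$ for every $v\in\CM_{\omega_\mu}$; taking the infimum over $\CM_{\omega_\mu}$ yields $\sigma_{\omega_\mu}\ge I_{\omega_\mu}(u_\mu)$. Hence $I_{\omega_\mu}(u_\mu)=m_{\omega_\mu}$, and since $u_\mu$ is a nontrivial solution of \ef{eq:1.1} with $\omega=\omega_\mu$ attaining the least action level, it is an action ground state solution.

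The point that requires the most care is not this chain of inequalities but the compatibility of the quantitative hypotheses for the value $\omega=\omega_\mu$. One must check that the smallness of $e^2\big(\|\rho\|_{\frac65}+\|x\cdot\nabla\rho\|_{\frac65}+\|x\cdot(D^2\rho x)\|_{\frac65}\big)$ demanded by Lemmas \ref{lem:3.2}--\ref{lem:3.9} and by Propositions \ref{prop:3.6}--\ref{prop:4.2} is compatible with the hypotheses of \cite{CW4} that produce $u_\mu$ with $\omega_\mu>0$ (namely $2<p<\tfrac73$, $\mu>2\cdot 2^{\frac{1}{2p-4}}\mu^*$, and \ef{Ass}--\ef{ASS}). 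Since the thresholds appearing in Lemmas \ref{lem:3.2}--\ref{lem:3.9} degenerate as $\omega\to 0$, this amounts to keeping $\omega_\mu$ inside the admissible range, which is precisely why the assumptions of Theorem \ref{thm:1.1} are imposed here for the specific multiplier $\omega=\omega_\mu$. Everything else is already at hand: the identity $I_\omega=E+\tfrac{\omega}{2}\|\cdot\|_2^2$, the scaling law $\|v_\la\|_2^2=\la\|v\|_2^2$, and the fibering-map monotonicity established in Section 3.
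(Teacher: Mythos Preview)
Your proof is correct and follows essentially the same approach as the paper's. Both arguments use the identity $I_{\omega}=E+\tfrac{\omega}{2}\|\cdot\|_2^2$, the $L^2$-scaling $\|v_\la\|_2^2=\la\|v\|_2^2$, and the key fact that for $v\in\CM_{\omega_\mu}$ one has $I_{\omega_\mu}(v)\ge I_{\omega_\mu}(v_\la)$ for all $\la>0$; the only cosmetic difference is that the paper applies this to the single action ground state $w_\mu$ and invokes \ef{eq:3.13} directly, whereas you apply it to an arbitrary $v\in\CM_{\omega_\mu}$ via the fibering-map characterization (Lemmas \ref{lem:3.1}, \ref{lem:3.9}, Remark \ref{rem:3.10}) and then take the infimum.
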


\begin{proof}
Since $u_{\mu}$ is a nontrivial critical point of $I_{\om}$, we have
\begin{equation} \label{eq:5.1}
u_{\mu} \in \CS_{\om} \quad \text{and} \quad 
m_{\om} \le I_{\om}(u_{\mu}).
\end{equation}
Thus it suffices to show that $I_{\om}(u_{\mu}) \le m_{\om}$.

Now let $w_{\mu} \in \HT \setminus \{ 0\}$ be an action ground state solution
of \ef{eq:1.1} with $\omega= \om$,
which exists by Theorem \ref{thm:1.1}.
Then by \ef{eq:3.13} and $J_{\om}(w_{\mu})=0$, it follows that
\begin{equation} \label{eq:5.2}
I_{\om}(w_\mu) - I_{\om}\left( (w_\mu)_{\la}\right) \ge
\frac{1-\la^3}{3} J_{\om} (w_\mu)=0 \quad \text{for all} \ \ \la>0,
\end{equation}
where $(w_{\mu})_\la= \la^2 w_{\mu}(\la x)$.
Taking $\la_{\mu} := \frac{\| u_{\mu} \|_2^2}{\| w_\mu \|_2^2}$, one has
\[
\left\| (w_\mu)_{\la_\mu}\right\|_2^2 = \| u_\mu \|_2^2
\quad \text{and hence} \quad
E(u_\mu) \le E\left( (w_\mu)_{\la_\mu} \right).
\]
Thus from \ef{eq:5.1} and \ef{eq:5.2}, we obtain
\[
\begin{aligned}
m_{\om} \le I_{\om}(u_\mu)
&= E(u_\mu)+ \frac{\om}{2} \| u_\mu\|_2^2 \\
&\le E\left( (w_\mu)_{\la_\mu}\right)
+\frac{\om}{2} \left\| (w_\mu)_{\la_\mu}\right\|_2^2 \\
&= I_{\om}\left( (w_\mu)_{\la_\mu}\right) 
\le I_{\om} (w_\mu)=m_{\om},
\end{aligned}
\]
from which we conclude.
\end{proof}

In order to consider the converse, let $\Omega(\mu)$ be the set of 
Lagrange multipliers associated with energy ground state solutions for $B_{\mu}$,
namely
\begin{align*}
\Omega(\mu) := \left\{ \right.
\omega_\mu>0 \mid  
&\ \hbox{$\omega_\mu$ is the Lagrange multiplier 
associated with an energy ground state } \\
& \hbox{of \ef{eq:1.1} under the constraint $B_{\mu}$}
\left. \right\}.
\end{align*}
By the result in \cite{CW4}, we know that $\Omega(\mu) \ne \emptyset$
for every $\mu> 2 \cdot 2^{\frac{1}{2p-4}} \mu^*$, when $2<p<\frac{7}{3}$ and
$e^2(\| \rho \|_{\frac{6}{5}}+\| x \cdot \nabla \rho \|_{\frac{6}{5}})$ is small. 
Moreover for any $\omega_\mu \in \Omega(\mu)$, 
there exists an action ground state solution $w_{\mu} \in \HT \setminus \{ 0 \}$ 
of \ef{eq:1.1} with $\omega= \om$ by Theorem \ref{thm:1.1}.

In this setting, the following result holds.

\begin{proposition} \label{prop:5.2}
Let $\mu> 2 \cdot 2^{\frac{1}{2p-4}} \mu^*$ be given 
and suppose that $2<p<\frac{7}{3}$.
Under the assumptions of Theorem \ref{thm:1.1} and \ef{Ass},
$w_{\mu}$ is an energy ground state solution of \ef{eq:1.1} 
under the constraint $B_{\mu}$.
\end{proposition}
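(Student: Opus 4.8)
The plan is to exploit the $L^2$-subcritical scaling $u_\la(x)=\la^2 u(\la x)$, which satisfies $\|u_\la\|_2^2=\la\|u\|_2^2$, together with the rigidity built into the energy inequality \ef{eq:3.13}. Fix $\om=\om_\mu\in\Omega(\mu)$ and let $u_\mu\in B(\mu)$ be an associated energy ground state solution, so that $E(u_\mu)=\inf_{B(\mu)}E$ and $u_\mu$ solves \ef{eq:1.1} with $\omega=\om$. By Proposition \ref{prop:5.1}, $u_\mu$ is simultaneously an action ground state solution of \ef{eq:1.1} with $\omega=\om$, i.e. $I_{\om}(u_\mu)=m_{\om}$. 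Recall also that, by definition, $w_\mu$ is an action ground state solution of \ef{eq:1.1} with $\omega=\om$, hence $w_\mu\in\CM_{\om}$, $J_{\om}(w_\mu)=0$ and $I_{\om}(w_\mu)=m_{\om}$; in particular $\|w_\mu\|_{p+1}\ge\delta_1$ by \ef{eq:3.3}.

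First I would rescale $w_\mu$ onto the sphere $B(\mu)$: set $\la_\mu:=\mu/\|w_\mu\|_2^2>0$, so that $\|(w_\mu)_{\la_\mu}\|_2^2=\mu$ and therefore $E\big((w_\mu)_{\la_\mu}\big)\ge\inf_{B(\mu)}E=E(u_\mu)$. Since $\|w_\mu\|_{p+1}\ge\delta_1$ and $\rho_0$ is small, the energy inequality \ef{eq:3.13} applies to $u=w_\mu$; combined with $J_{\om}(w_\mu)=0$ it gives, for every $\la>0$,
\[
I_{\om}(w_\mu)-I_{\om}\big((w_\mu)_{\la}\big)\ \ge\ \frac{(1-\la)^2\om}{6}\|w_\mu\|_2^2+\frac{\alpha\delta_1^{p-1}}{12(p+1)}(1-\la)^2\|w_\mu\|_{p+1}^2,
\]
the right-hand side being strictly positive whenever $\la\neq1$ (here $\om>0$ and $w_\mu\neq0$).

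Next I would close the loop by a chain of inequalities. Using the decomposition $I_{\om}(u)=E(u)+\tfrac{\om}{2}\|u\|_2^2$ and $\|(w_\mu)_{\la_\mu}\|_2^2=\mu=\|u_\mu\|_2^2$,
\[
m_{\om}=I_{\om}(w_\mu)\ \ge\ I_{\om}\big((w_\mu)_{\la_\mu}\big)=E\big((w_\mu)_{\la_\mu}\big)+\tfrac{\om}{2}\mu\ \ge\ E(u_\mu)+\tfrac{\om}{2}\|u_\mu\|_2^2=I_{\om}(u_\mu)=m_{\om},
\]
where the last equality is Proposition \ref{prop:5.1}. Hence every inequality above is an equality. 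Equality in the energy inequality forces $\la_\mu=1$, i.e. $\|w_\mu\|_2^2=\mu$, so $w_\mu\in B(\mu)$; and the equality $E\big((w_\mu)_{\la_\mu}\big)=E(u_\mu)=\inf_{B(\mu)}E$ together with $\la_\mu=1$ yields $E(w_\mu)=\inf_{B(\mu)}E$. Therefore $w_\mu$ is an energy ground state solution of \ef{eq:1.1} under the constraint $B_\mu$.

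The one genuinely substantive step is this rigidity argument: it is the strict positivity of the remainder in \ef{eq:3.13} for $\la\neq1$ that pins the scaling parameter $\la_\mu$ to $1$ and thereby shows $w_\mu$ already lives on $B(\mu)$, while Proposition \ref{prop:5.1} is precisely what makes the chain of inequalities collapse to equalities; the two ingredients must be used in tandem. Everything else is routine manipulation of the identity $I_{\om}=E+\tfrac{\om}{2}\|\cdot\|_2^2$ and the scaling relations of Section 2.
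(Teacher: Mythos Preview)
Your proof is correct and follows essentially the same route as the paper: rescale $w_\mu$ onto $B(\mu)$ via $\la_\mu=\mu/\|w_\mu\|_2^2$, sandwich $I_{\om}((w_\mu)_{\la_\mu})$ between $m_{\om}$ on both sides using the energy inequality \ef{eq:3.13} and the minimality of the energy ground state, and read off $\la_\mu=1$ from the strict positivity of the remainder. The only cosmetic difference is that you invoke Proposition~\ref{prop:5.1} to obtain $I_{\om}(u_\mu)=m_{\om}$ directly, whereas the paper uses the weaker observation $m_{\om}\le I_{\om}(\tilde u_\mu)$ (which then becomes an equality once the chain collapses); the arguments are otherwise identical.
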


\begin{proof}
Since $w_{\mu}$ is an action ground state solution of \ef{eq:1.1}
with $\omega = \om$, by using \ef{eq:3.13}, we have $J_{\om}(w_{\om})=0$ and
\begin{equation} \label{eq:5.3}
I_{\om} (w_\mu)-I_{\om} \left( (w_\mu)_\la \right)
\ge \frac{(1-\la)^2 \om}{6} \| w_\mu \|_2^2 \ge 0
\quad \text{for all} \ \ \la>0.
\end{equation}

Now let $\tilde{u}_{\mu}$ be an energy ground state solution of \ef{eq:1.1}
under the constraint $B_{\mu}$ whose Lagrange multiplier coincides with $\om$.
Then if follows that 
\[
m_{\om} \le I_{\om}(\tilde{u}_{\mu}) \quad \text{and} \quad
E(\tilde{u}_{\mu}) \le E(u) \quad \text{for any} \ u \in \HT
\ \text{with} \ \| u \|_2^2 = \mu.
\]
Especially choosing $\tilde{\la}_\mu =\frac{\mu}{\| w_\mu \|_2^2}$, we have
\begin{equation} \label{eq:5.4}
\left\| (w_\mu )_{\tilde{\la}_\mu} \right\|_2^2 = \mu = \| \tilde{u}_\mu \|_2^2
\quad \text{and} \quad
E(\tilde{u}_\mu) \le E\left( (w_\mu)_{\tilde{\la}_\mu}\right).
\end{equation}
Then from \ef{eq:5.3} and \ef{eq:5.4}, we deduce that
\[
\begin{aligned}
m_{\om} \le I_{\om}(\tilde{u}_\mu)
&= E(\tilde{u}_\mu) + \frac{\om}{2} \| \tilde{u}_\mu \|_2^2 \\
&\le E\left( (w_\mu)_{\tilde{\la}_\mu} \right)
+\frac{\om}{2} \left\| (w_\mu)_{\tilde{\la}_\mu} \right\|_2^2 \\
&= I_{\om}\left( (w_\mu)_{\tilde{\la}_{\mu}} \right) 
\le I_{\om}(w_\mu) = m_{\om},
\end{aligned}
\]
which yields that $I_{\om} \left( (w_\mu)_{\tilde{\la}_{\mu}} \right)
=I_{\om}(w_\mu)$.
Going back to \ef{eq:5.3}, one finds that
\[
0 \ge \frac{(1-\tilde{\la}_\mu)^2 \om}{6} \| w_\mu \|_2^2 \ge 0,
\]
from which we conclude that $\tilde{\la}_{\mu}=1$ and hence
\[
E(w_\mu) = E(\tilde{u}_\mu) = \inf_{B_\mu} E(u).
\]
This completes the proof.
\end{proof}

\begin{remark} \label{rem:5.3}
{\rm(i)} By Proposition \ref{prop:5.2}, if $\omega_\mu \in \Omega(\mu)$,
every action ground state solution of \ef{eq:1.1} with $\omega= \om$
share the same $L^2$-norm.

{\rm(ii)} Although $\Omega(\mu) \ne \emptyset$ for every $\mu>2 \cdot 2^{\frac{1}{2p-4}} \mu^*$,
we don't know whether $\Omega(\mu)$ is an interval.
Moreover we don't know for given $\omega_\mu \in \Omega(\mu)$,
there exists a unique $\mu>0$ such that 
the corresponding Lagrange multiplier coincides with $\omega_{\mu}$.
In other words, it is not known if
\begin{equation} \label{eq:5.5}
\mu \mapsto \om: (0,\infty) \to (0,\ell) \ 
\text{is one to one mapping for some} \ \ell>0.
\end{equation}

\ef{eq:5.5} is important in investigating further properties of ground state solutions,
such as uniqueness and stability.
However \ef{eq:5.5} is known only in limited situations;
see \cite{CW, CW4, DST}. 
In particular, it is not known whether \ef{eq:5.5} holds true 
even when $\rho \equiv 0$.
\end{remark}

\begin{proof}[Proof of Theorem \ref{thm:1.2}]
The claim follows by Propositions \ref{prop:5.1} and \ref{prop:5.2}.
\end{proof}

\section{Existence of a radial ground state solution for $1<p < 2$}

In this section, we consider the case $1<p<2$,
for which arguments in sections 3-5 do not work well.
Especially we cannot handle the energy inequality \ef{eq:3.13} in Lemma \ref{lem:3.7}
because the coefficient in front of $\| u \|_{p+1}^{p+1}$ becomes negative
when $1<p<2$.

In the case $1<p<2$, we only have the following weak result.

\begin{theorem} \label{thm:6.1}
Suppose that $1<p<2$. 
Assume $\rho \in L^{\frac{6}{5}}(\R^3)$ and $\rho(x)=\rho(|x|)$.
There exists $e_0>0$ such that if $0< e \le e_0$,
\ef{eq:1.1} has a radial ground state solution $u_0$, that is, $u_0$ satisfies
\[
I(u_0)= \inf_{u \in \CS_{rad}} I(u), \quad
\CS_{rad} = \{ u \in H^1_{rad}(\R^3) \mid I'(u)=0 \},
\]
where $H^1_{rad}(\R^3) = \{ u \in H^1(\R^3,\R) \mid u(x)=u(|x|) \}$.
\end{theorem}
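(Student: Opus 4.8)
Since the Nehari--Pohozaev machinery of Sections 3--5 is unavailable here --- the Pohozaev identity of Lemma \ref{lem:2.5} requires $x\cdot\nabla\rho\in L^{6/5}(\R^3)$, which is not assumed, and the energy inequality \ef{eq:3.13} has the wrong sign in front of $\|u\|_{p+1}^{p+1}$ once $1<p<2$ --- the plan is to work on the plain Nehari manifold and to let radial symmetry restore compactness, essentially adapting the argument of \cite{R} for $\rho\equiv0$ in the range $1<p<2$ to the radially symmetric, doping-perturbed setting. One works throughout in $H^1_{rad}(\R^3)$, on which $H^1_{rad}(\R^3)\hookrightarrow L^q(\R^3)$ is compact for $2<q<6$; since $\rho$ is radial, $I$ is $O(3)$-invariant, so by the principle of symmetric criticality a radial critical point of $I$ is a critical point on $H^1(\R^3,\R)$. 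Put $\CN:=\{u\in H^1_{rad}(\R^3)\setminus\{0\}\mid N(u)=0\}$, with $N$ as in \ef{eq:2.9} (note $N$ involves only $D$ and $E_1$, hence is well defined under $\rho\in L^{6/5}(\R^3)$ alone), and study the fibering map $g_u(t):=I(tu)$, $t>0$.

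A direct computation gives $g_u'(t)=t\,\psi_u(t)$ with
\[
\psi_u(t)=A(u)+\omega B(u)+4e^2E_1(u)-t^{p-1}C(u)+4e^2t^2D(u),
\]
and since $1<p<2$ the function $\psi_u$ is strictly convex on $(0,\infty)$, with a unique minimum point. Hence, once $e$ is small enough that $A(u)+\omega B(u)+4e^2E_1(u)>0$ for all $u\ne0$ (which holds by Lemma \ref{lem:2.2}), each $g_u$ has at most two positive critical points, a first local maximum and a subsequent local minimum. Splitting $\CN=\CN^+\cup\CN^0\cup\CN^-$ according to the sign of $g_u''(1)=-(p-1)C(u)+8e^2D(u)$ (using $N(u)=0$), one has that $\CN^-$ collects the first local maxima; for $e$ small the fibering map reaches negative values for every $u\ne0$ (for moderate $t$ it stays within $O(e^2)$ of $\tfrac12t^2\|u\|^2-\tfrac1{p+1}t^{p+1}C(u)$), so there is a unique $t^-(u)>0$ with $t^-(u)u\in\CN^-$, continuous in $u$; in particular $\CN^-\ne\emptyset$.

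The minimisation on $\CN^-$ then rests on two quantitative facts. From $I(u)=I(u)-\tfrac1{p+1}N(u)$ and the defining identity $C(u)=A(u)+\omega B(u)+4e^2D(u)+4e^2E_1(u)$ one gets
\[
I(u)=\Big(\tfrac12-\tfrac1{p+1}\Big)\big(A(u)+\omega B(u)\big)+\Big(1-\tfrac4{p+1}\Big)e^2D(u)+\Big(2-\tfrac4{p+1}\Big)e^2E_1(u),
\]
while on $\CN^-$ the condition $g_u''(1)<0$ forces $(12-4p)e^2D(u)\le(p-1)\big(A(u)+\omega B(u)\big)+Ce^2\|\rho\|_{6/5}\|u\|^2$. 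Substituting this (negatively weighted) bound and using Lemma \ref{lem:2.2} for the $E_1$-term yields $I(u)\ge\tfrac{p-1}{4(p+1)}\big(A(u)+\omega B(u)\big)-Ce^2\|\rho\|_{6/5}\|u\|^2$; hence, for $e$ small, $I$ is coercive on $\CN^-$, $\CN^-$ is bounded away from $0$ (since $C(u)\ge c\|u\|^2$ on $\CN$ whereas $C(u)\le C\|u\|^{p+1}$), and $\sigma^-:=\inf_{\CN^-}I>0$. Given a minimising sequence $\{u_n\}\subset\CN^-$, coercivity gives $\|u_n\|\le C$; passing to a weak limit $u_0$ in $H^1_{rad}(\R^3)$, the compact embedding gives $C(u_n)\to C(u_0)$ and $D(u_n)\to D(u_0)$, Lemma \ref{lem:2.4} gives $E_1(u_n)\to E_1(u_0)$, and $A,B$ are weakly lower semicontinuous; since $C(u_n)\ge c\|u_n\|^2\ge c\delta^2>0$ ($\|u_n\|\ge\delta$ on $\CN^-$), $u_0\ne0$. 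Arguing as in Lemma \ref{lem:3.9} and Proposition \ref{prop:4.2} --- using that the sole source of non-compactness is the $L^2$-norm and that a ``spreading'' loss of mass is incompatible with $u_n\in\CN^-$ --- one obtains $u_0\in\CN^-$ with $I(u_0)=\sigma^-$; since $g_{u_0}''(1)<0$, the constraint is natural (testing $I'(u_0)=\kappa N'(u_0)$ against $u_0$ gives $\kappa\,g_{u_0}''(1)=0$, so $\kappa=0$), and $u_0$ solves \ef{eq:1.1}.

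Finally one must identify $\sigma^-$ with $m_{rad}:=\inf_{\CS_{rad}}I$. As $u_0\in\CS_{rad}$, $m_{rad}\le\sigma^-$; for the converse, every nontrivial radial solution lies on $\CN$, and any $u\in\CN^+$ satisfies $\|u\|\ge c_0/e$ (immediate from $g_u''(1)>0$ and $D(u)\le C\|u\|^4$), so it suffices to show that for $e$ small no radial solution of \ef{eq:1.1} lies on $\CN^+$, equivalently that all radial solutions obey an a priori $H^1$-bound uniform for small $e$. Establishing this bound --- without the Pohozaev identity, which the weak hypothesis on $\rho$ denies us --- together with the compactness recovery of the previous step, is where I expect the real work to lie; both are the standard obstructions in the regime $1<p<2$ and would presumably be handled by a scaling comparison with the limiting equation $-\Delta u+\omega u=|u|^{p-1}u$ (whose radial ground state is non-degenerate) together with the radial compactness in $L^q$, $2<q<6$. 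Granting them, $\CS_{rad}\subset\CN^-$, hence $I(u)\ge\sigma^-$ for all $u\in\CS_{rad}$ and $m_{rad}=\sigma^-=I(u_0)$. (Should excluding $\CN^+$ prove awkward, an alternative is to replace $e^2D(u)$ by a truncation preserving the structure above, solve the truncated problem by the mountain pass theorem using the radial Palais--Smale condition, and verify the solution respects the cut-off once $e$ is small.)
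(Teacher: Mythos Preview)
Your approach has a structural error, not merely the acknowledged gap. For $1<p<2$ the radial ground state lies on $\CN^+\cup\CN^0$, not on $\CN^-$, so the identification $m_{rad}=\sigma^-$ you are aiming for is impossible.

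The paper's argument is far simpler and explains why. By Proposition~\ref{prop:6.2} (Ruiz \cite[Theorem~4.3]{R}), for $1<p<2$ the functional $I_\infty$ is bounded below on $H^1_{rad}(\R^3)$ and satisfies the Palais--Smale condition there; since $I=I_\infty+2e^2E_1$ with $E_1$ compact (Lemma~\ref{lem:2.4}) and $|E_1(u)|\le C\|\rho\|_{6/5}\|u\|^2$, the same holds for $I$. Choosing any $\tilde u\in H^1_{rad}$ with $I_0(\tilde u)<0$ (the $e=0$ functional) gives $I(\tilde u)\le I_0(\tilde u)+Ce^2\|\tilde u\|^4+Ce^2\|\rho\|_{6/5}\|\tilde u\|^2<0$ for $e$ small, so $\inf_{H^1_{rad}}I<0$; Ekeland's principle then yields a \emph{global} minimizer $u_0\ne0$, and trivially
\[
I(u_0)=\min_{H^1_{rad}}I\le \inf_{\CS_{rad}}I\le I(u_0).
\]
That is the whole proof.

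This $u_0$ refutes your plan directly. Being a global minimizer of $I$, it is in particular a local minimum along the ray $t\mapsto tu_0$, so $g_{u_0}''(1)\ge0$ and $u_0\in\CN^+\cup\CN^0$. Since $I(u_0)<0$ while your own computation gives $\sigma^-=\inf_{\CN^-}I>0$ for small $e$, one has $m_{rad}=I(u_0)<0<\sigma^-$. Consequently the uniform a priori $H^1$ bound you hope to establish is false: by your own estimate, every point of $\CN^+$ satisfies $\|u\|\ge c_0/e$, and $u_0$ sits there. Whatever minimizer you might produce on $\CN^-$ would be a higher-energy mountain-pass type solution, not the radial ground state.
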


To this aim, let us recall the functional $I_{\infty}$ for $\rho \equiv 0$,
which was defined by
\[
I_{\infty}(u) := \frac{1}{2} A(u) +\frac{\omega}{2} B(u) 
-\frac{1}{p+1} C(u)+ e^2 D(u) \quad \text {for} \ \ u \in \HT.
\]
When $1<p<2$, the following properties are known; 
see \cite[Theorem 4.3]{R}.

\begin{proposition} \label{prop:6.2}
Suppose that $1<p<2$ and $e>0$. Then it holds:
\begin{enumerate}
\item[\rm(i)] $\dis \inf_{H^1_{rad}(\mathbb{R}^3)} I_{\infty}(u)>-\infty$,

\item[\rm(ii)] $I_{\infty}$ satisfies the Palais-Smale condition on $H^1_{rad}(\R^3)$.
\end{enumerate}

\end{proposition}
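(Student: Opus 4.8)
The plan is to base both assertions on a Gagliardo--Nirenberg-type inequality that involves the Coulomb energy $D(u)$, supplemented for (ii) by the compactness of the radial Sobolev embedding. Bringing in $D(u)$ is forced: for $1<p<2$ the functional $\tfrac12 A(u)+\tfrac\omega2 B(u)-\tfrac1{p+1}C(u)$ is already unbounded below, as one checks by scaling $u\mapsto\lambda u(\mu\,\cdot\,)$. The inequality I would use is
\[
\|u\|_3^3=\int_{\R^3}|u|\,|u|^2\,dx=2\int_{\R^3}\nabla|u|\cdot\nabla S_0(u)\,dx
\le 2\|\nabla u\|_2\,\|\nabla S_0(u)\|_2\le C\,\|\nabla u\|_2\,D(u)^{1/2},
\]
where I used $-\Delta S_0(u)=\tfrac12|u|^2$, $\big|\nabla|u|\big|\le|\nabla u|$ a.e., and $\|\nabla S_0(u)\|_2^2=2D(u)$. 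Interpolating $L^{p+1}$ between $L^2$ and $L^3$ -- legitimate precisely because $2<p+1<3$, i.e. $1<p<2$ -- then yields
\[
\|u\|_{p+1}^{p+1}\le\|u\|_2^{\,4-2p}\,\|u\|_3^{\,3(p-1)}\le C\,\|u\|_2^{\,4-2p}\,\|\nabla u\|_2^{\,p-1}\,D(u)^{(p-1)/2},
\]
which is the estimate of \cite{R}.

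For (i), I would insert this bound into $I_\infty(u)=\tfrac12 A(u)+\tfrac\omega2 B(u)+e^2D(u)-\tfrac1{p+1}C(u)$ and absorb the last term, via Young's inequality, into the three nonnegative quantities $\tfrac12 A(u)$, $e^2D(u)$ and $\tfrac\omega2 B(u)$. Since $\|u\|_2^{4-2p}\|\nabla u\|_2^{p-1}D(u)^{(p-1)/2}$ is homogeneous of degree one in $(A(u),D(u),B(u))$, an appropriate splitting produces
\[
I_\infty(u)\ge c_0\big(A(u)+B(u)+D(u)\big)-C_0
\]
for constants $c_0>0$, $C_0\ge0$, whence $\inf_{H^1(\R^3)}I_\infty>-\infty$, and in particular the infimum over $H^1_{rad}(\R^3)$ is finite. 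The same bound shows $I_\infty$ is coercive on $H^1_{rad}(\R^3)$, which I use in (ii).

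For (ii), let $(u_n)\subset H^1_{rad}(\R^3)$ satisfy $I_\infty(u_n)\to c$ and $I_\infty'(u_n)\to0$ in $(H^1_{rad}(\R^3))^*$. By coercivity $(u_n)$ is bounded, so, along a subsequence, $u_n\rightharpoonup u$ in $H^1_{rad}(\R^3)$, and by the compact embedding $H^1_{rad}(\R^3)\hookrightarrow L^q(\R^3)$ for $2<q<6$ we get $u_n\to u$ strongly in $L^{p+1}(\R^3)$ and in $L^{12/5}(\R^3)$; by Lemma~\ref{lem:2.3}, $S_0(u_n)\to S_0(u)$ in $L^6(\R^3)$. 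Since $I_\infty'(u_n)\to0$ and $u_n-u\rightharpoonup0$, we have $\langle I_\infty'(u_n)-I_\infty'(u),u_n-u\rangle\to0$, and expanding the left-hand side,
\[
\|\nabla(u_n-u)\|_2^2+\omega\|u_n-u\|_2^2
=\langle I_\infty'(u_n)-I_\infty'(u),u_n-u\rangle
+\int_{\R^3}\big(|u_n|^{p-1}u_n-|u|^{p-1}u\big)(u_n-u)\,dx
-e^2\langle D'(u_n)-D'(u),u_n-u\rangle.
\]
The nonlinear term goes to $0$ by strong $L^{p+1}$ convergence and Hölder; the Coulomb term equals $\int_{\R^3}S_0(u_n)(u_n-u)^2\,dx+\int_{\R^3}(S_0(u_n)-S_0(u))\,u\,(u_n-u)\,dx$, which goes to $0$ by Hölder together with $\sup_n\|S_0(u_n)\|_6<\infty$ (Lemma~\ref{lem:2.2}), $\|S_0(u_n)-S_0(u)\|_6\to0$ and $\|u_n-u\|_{12/5}\to0$. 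Hence $u_n\to u$ in $H^1_{rad}(\R^3)$.

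The main obstacle is (i): its substance is the competition between the defocusing term $e^2D(u)$ and the subquadratic nonlinearity, and closing the Young absorption for every $e,\omega>0$ is delicate, since the exponents in the key inequality leave no slack -- one has to exploit the constraints on the attainable triples $(A(u),D(u),B(u))$ rather than treat $A$, $D$, $B$ as independent, as in \cite{R}. Granting (i), part (ii) is routine, its only non-elementary inputs being the compactness of the radial Sobolev embedding and the convergence of $S_0$ supplied by Lemma~\ref{lem:2.3}.
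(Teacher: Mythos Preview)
The paper offers no proof of its own here: it simply cites \cite[Theorem~4.3]{R}. Your sketch is exactly in the spirit of Ruiz's argument, and you have correctly isolated the crucial ingredient, namely the Coulomb--Gagliardo--Nirenberg inequality
\[
\|u\|_3^3\le 2\sqrt{2}\,\|\nabla u\|_2\,D(u)^{1/2},
\]
together with the interpolation $\|u\|_{p+1}^{p+1}\le\|u\|_2^{4-2p}\|u\|_3^{3(p-1)}$, which is valid precisely for $1<p<2$. Your treatment of (ii) is correct and routine: boundedness from (i) gives coercivity, and then the compact radial embedding plus Lemma~\ref{lem:2.3} close the Palais--Smale argument exactly as you wrote.

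The one genuine gap is in (i), and you have in fact put your finger on it in your final paragraph, though the body of your argument glosses over it. The bound you obtain,
\[
C(u)\le C_0\,B(u)^{2-p}A(u)^{(p-1)/2}D(u)^{(p-1)/2},
\]
is \emph{exactly} homogeneous of degree one in $(A,B,D)$, the same degree as the positive part $\tfrac12 A+\tfrac\omega2 B+e^2D$. Consequently no Young-type splitting can ever produce an inequality of the shape $I_\infty(u)\ge c_0(A+B+D)-C_0$ with $c_0>0$ and a finite $C_0$: treating $A,B,D$ as independent nonnegative reals, the right-hand side $\tfrac12 A+\tfrac\omega2 B+e^2D-\tilde C\,A^{(p-1)/2}D^{(p-1)/2}B^{2-p}$ is itself $1$-homogeneous, hence either nonnegative everywhere or unbounded below, with no room for an additive constant. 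A short Lagrange-multiplier computation shows the former alternative requires
\[
\tilde C\,(p-1)^{p-1}(2e^2)^{-(p-1)/2}\Bigl(\tfrac{2(2-p)}{\omega}\Bigr)^{2-p}\le 1,
\]
which fails for small $e$ or small $\omega$. So the sentence ``an appropriate splitting produces $I_\infty(u)\ge c_0(A+B+D)-C_0$'' is not correct as written. What rescues the argument---and what Ruiz actually exploits---is that the triple $(A(u),B(u),D(u))$ is \emph{not} free: one has in addition the Hardy--Littlewood--Sobolev/Gagliardo--Nirenberg constraint $D(u)\le C\,A(u)^{1/2}B(u)^{3/2}$, and it is the interplay of the two inequalities that forces coercivity for every $e,\omega>0$. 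You allude to this (``exploit the constraints on the attainable triples''), but your write-up does not carry it out; for a complete proof one really must follow \cite{R} at this point.
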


Now by Lemma \ref{lem:2.2}, one finds that
\[
I(u) = I_{\infty}(u)+e^2 E_1(u) 
\ge I_{\infty}(u) - C e^2 \| \rho \|_{\frac{6}{5}} \| u \|^2
\quad \text{for any} \ u \in \HT,
\]
from which we conclude that $\dis \inf_{H^1_{rad}(\mathbb{R}^3)} I(u)>-\infty$.
Moreover since $E_1$ is a compact operator by Lemma \ref{lem:2.4}, 
$I$ also satisfies the Palais-Smale condition on $H^1_{rad}(\R^3)$.

\begin{proof}[Proof of Theorem \ref{thm:6.1}]
Let us define the energy functional $I_0$ for $e=0$:
\[
I_0(u) =\frac{1}{2} A(u) +\frac{\omega}{2} B(u) -\frac{1}{p+1} C(u) 
\quad \text {for} \ u \in \HT.
\]
It is standard to show that there exists $\tilde{u} \in H^1_{rad}(\R^3)$
such that $I_0(\tilde{u})<0$.
Then by Lemma \ref{lem:2.2}, one has
\[
\begin{aligned}
I(\tilde{u}) &= I_0(\tilde{u}) +e^2 D(\tilde{u}) +e^2 E_1(\tilde{u}) \\
&\le I_0(\tilde{u}) +C e^2 \| \tilde{u} \|^4 
+C e^2 \| \rho \|_{\frac{6}{5}} \| \tilde{u} \|^2.
\end{aligned}
\]
Choosing a small $e_0>0$, we have $I(\tilde{u})<0$ for $0<e \le e_0$
and hence $\dis \inf_{u \in H^1_{rad}(\R^3)} I(u)<0$.

By Proposition \ref{prop:6.2}, we are able to apply Ekeland's variational principle.
Then there exists $u_0 \in H^1_{rad}(\R^3) \setminus \{ 0 \}$ such that
\[
I(u_0)= \min_{u \in H^1_{rad}(\R^3)} I(u)<0.
\]
Particularly, $u_0$ is a nontrivial solution of \ef{eq:1.1}.
Moreover since $u_0$ is a global minimizer on $H^1_{rad}(\R^3)$, it follows that
\[
I(u_0) \ge \inf_{u \in \CS_{rad}} I(u) 
\ge \inf_{u \in H^1_{rad}(\R^3)} I(u) = I(u_0),
\]
yielding that $u_0$ is a radial ground state solution of \ef{eq:1.1}.
\end{proof}

\begin{remark} \label{rem:6.3}
{\rm(i)}  
In \cite{CW4}, we have shown that \ef{eq:1.1} has an energy ground state solution
if $1<p<2$ and $e$ is sufficiently small.
Unfortunately unlike the case $2<p<\frac{7}{3}$,
we cannot say anything about the relation
between the radial ground state solution obtained in Theorem \ref{thm:6.1}
and the energy ground state solution in \cite{CW4}.

{\rm(ii)} In the case $p=2$, it remains open whether \ef{eq:1.1} has 
a nontrivial solution for any $\omega>0$ and small $e>0$.
Moreover in \cite{CW}, thanks to the scaling invariance of \ef{eq:1.3} when $p=2$,
it was proved that an energy ground state solution of \ef{eq:1.3}
is an action ground state solution and vise versa.
However due to the loss of scaling invariance for $\rho \not\equiv 0$,
we don't know if an energy ground state solution of \ef{eq:1.1} 
obtained in \cite{CW4} is an action ground state of \ef{eq:1.1}.
\end{remark}

\section{The case $\rho$ is a characteristic function}

In this section, we consider the case where the doping profile $\rho$
is a characteristic function, 
which appears frequently in physical literatures \cite{Je, MRS, Sel}.
More precisely, let $\{ \Omega_i \}_{i=1}^m \subset \R^3$ be
disjoint bounded open sets with smooth boundary.
For $\alpha_i>0$ $(i=1,\cdots, m)$, 
we assume that the doping profile $\rho$ has the form:
\begin{equation} \label{eq:7.1}
\rho(x)= \sum_{i=1}^m \alpha_i \chi_{\Omega_i}(x), \quad
\chi_{\Omega_i}(x)=
\begin{cases} 
1 & (x \in \Omega_i), \\
0 & (x \notin \Omega_i).
\end{cases}
\end{equation}
In this case, $\rho$ cannot be weakly differentiable so that the assumption
\ef{eq:1.5} does not make sense.
Even so, we are able to obtain the existence of ground state solutions 
by imposing some smallness condition related with $\alpha_i$ and $\Omega_i$.

To state our main result for this case, 
let us put $\dis L:= \sup_{x \in \partial \Omega} |x| < \infty$.
A key is the following \textit{sharp boundary trace inequality} 
due to \cite[Theorem 6.1]{A},
which we present here according to the form used in this paper.

\begin{proposition} \label{prop:7.1}
Let $\Omega \subset \R^3$ be a bounded domain with smooth boundary
and $\gamma: H^1(\Omega) \to L^2(\partial \Omega)$ be 
the trace operator.
Then it holds that
\[
\int_{\partial \Omega} | \gamma(u) |^2 \,dS
\le \kappa_1(\Omega) \int_{\Omega} |u|^2 \,dx
+ \kappa_2(\Omega) 
\left( \int_{\Omega} |u|^2 \,dx \right)^{\frac{1}{2}}
\left( \int_{\Omega} | \nabla u|^2 \,dx \right)^{\frac{1}{2}}
\quad \hbox{for any} \ u \in H^1(\Omega),
\]
where $\kappa_1(\Omega)= \frac{|\partial \Omega|}{|\Omega|}$,
$\kappa_2(\Omega) = \big\| | \nabla w| \big\|_{L^{\infty}(\partial\Omega)}$
and $w$ is a unique solution of the torsion problem:
\[
\Delta w = \kappa_1(\Omega) \ \hbox{in} \ \Omega, 
\quad \frac{\partial w}{\partial n} = 1 \ \hbox{on} \ \partial \Omega.
\]
\end{proposition}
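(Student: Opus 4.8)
The plan is to obtain Proposition~\ref{prop:7.1} from \cite[Theorem~6.1]{A}, which we shall take as known; I would only indicate the mechanism behind such an estimate and point to the genuinely delicate step. First I would reduce to nonnegative functions: since $\bigl|\nabla|u|\bigr|\le|\nabla u|$ a.e.\ in $\Omega$ and $\gamma(|u|)=|\gamma(u)|$, replacing $u$ by $|u|$ keeps the left-hand side unchanged and does not increase the right-hand side, so it is enough to treat $u\in H^1(\Omega)$ with $u\ge 0$. I would also record that the torsion problem is well posed: integrating $\Delta w=\kappa_1(\Omega)$ over $\Omega$ yields the compatibility condition $\kappa_1(\Omega)\,|\Omega|=\int_{\partial\Omega}\frac{\partial w}{\partial n}\,dS=|\partial\Omega|$, which is exactly the definition $\kappa_1(\Omega)=|\partial\Omega|/|\Omega|$; the solution is unique up to an additive constant, so $\nabla w$, and hence $\kappa_2(\Omega)$, is well defined.

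The core identity comes from integrating by parts against the vector field $u^2\nabla w$. Using the divergence theorem together with $\frac{\partial w}{\partial n}=1$ on $\partial\Omega$ and $\Delta w=\kappa_1(\Omega)$ in $\Omega$ gives
\[
\int_{\partial\Omega}u^2\,dS=\int_{\Omega}\operatorname{div}\bigl(u^2\nabla w\bigr)\,dx=\kappa_1(\Omega)\int_{\Omega}u^2\,dx+2\int_{\Omega}u\,\nabla u\cdot\nabla w\,dx .
\]
To replace the interior size of $\nabla w$ by its boundary size, I would use that $|\nabla w|^2$ is subharmonic, since $\Delta|\nabla w|^2=2|D^2 w|^2+2\nabla w\cdot\nabla(\Delta w)=2|D^2 w|^2\ge 0$, so that by the maximum principle $\bigl\|\,|\nabla w|\,\bigr\|_{L^\infty(\Omega)}=\bigl\|\,|\nabla w|\,\bigr\|_{L^\infty(\partial\Omega)}=\kappa_2(\Omega)$, and then estimate the last integral by the Cauchy--Schwarz inequality in terms of $\|u\|_{L^2(\Omega)}$ and $\|\nabla u\|_{L^2(\Omega)}$.

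I expect the main obstacle to be the sharpness of the constant. The straightforward argument above only produces a coefficient of the form $2\,\kappa_2(\Omega)$ in front of $\|u\|_{L^2(\Omega)}\|\nabla u\|_{L^2(\Omega)}$, whereas Proposition~\ref{prop:7.1} carries the optimal $\kappa_2(\Omega)$; removing the factor $2$ requires the finer rearrangement of the cross term carried out in \cite{A}, exploiting the behaviour of the torsion function near $\partial\Omega$. Since we only need the estimate in the stated form, in this paper we simply invoke \cite[Theorem~6.1]{A}.
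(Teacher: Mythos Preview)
Your proposal is correct and matches the paper's approach: the paper does not prove Proposition~\ref{prop:7.1} at all but simply quotes it from \cite[Theorem~6.1]{A}, so your invocation of the same source is exactly what is required. The heuristic sketch you add (integration by parts against $u^2\nabla w$, subharmonicity of $|\nabla w|^2$, and the observation that the naive argument misses the sharp constant by a factor~$2$) is additional exposition that the paper omits entirely.
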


In relation to the size of $\rho$, we define
\[
D(\Omega) := L | \Omega |^{\frac{1}{6}} | 
\left( L \| H \|_{L^2(\partial \Omega)} + |\partial \Omega |^{\frac{1}{2}} \right)
\left( \kappa_1(\Omega) | \Omega |^{\frac{1}{3}} + \kappa_2 (\Omega) 
\right)^{\frac{1}{2}},
\]
where $H$ is the mean curvature of $\partial \Omega$.

\begin{remark} \label{rem:7.2}
It is known that $\kappa_2(\Omega) \ge 1$; see \cite{A}.
Then by the isoperimetric inequality in $\R^3$:
\[
| \partial \Omega | \ge 3 | \Omega |^{\frac{2}{3}} | B_1 |^{\frac{1}{3}},
\]
and the fact $| \Omega | \le |B_L(0)| = L^3 |B_1|$, we find that
\begin{equation} \label{eq:7.2}
D(\Omega) \ge
\left( \frac{|\Omega|}{|B_1|} \right)^{\frac{1}{3}} |\Omega|^{\frac{1}{6}}
\cdot \sqrt{3} |\Omega|^{\frac{1}{3}} |B_1|^{\frac{1}{6}}
\left( 3 |B_1|^{\frac{1}{3}} +1 \right)^{\frac{1}{2}} 
=C |\Omega|^{\frac{5}{6}} = C \| \chi_{\Omega} \|_{L^{\frac{6}{5}}(\R^3)},
\end{equation}
where $C$ is a positive constant independent of $\Omega$. 
\end{remark}

Under these preparations, we have the following result.

\begin{theorem} \label{thm:7.3}
Suppose that $2<p<5$ and assume \ef{eq:7.1}.
There exists $\rho_0>0$ such that if
\[
e^2 \sum_{i=1}^m \alpha_i D(\Omega_i) \le \rho_0,
\]
then \ef{eq:1.1} has a ground state solution $u_0$.
Moreover the statement of Theorem \ref{thm:1.2} holds true.
\end{theorem}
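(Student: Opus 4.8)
The plan is to rerun the entire argument of Sections~2--5 with the single scalar quantity $e^2\sum_{i=1}^m\alpha_iD(\Omega_i)$ taking over the role that $e^2\bigl(\|\rho\|_{6/5}+\|x\cdot\nabla\rho\|_{6/5}+\|x\cdot(D^2\rho x)\|_{6/5}\bigr)$ played there. The functional $E_1(u)=-\tfrac14\sum_i\alpha_i\int_{\Omega_i}S_0(u)\,dx$ still makes sense and is still strictly negative for $\rho\not\equiv0$, so Lemma~\ref{lem:4.1} survives unchanged. What previously was $E_2$ I would now define as the boundary functional $E_2(u):=-\tfrac12\sum_i\alpha_i\int_{\partial\Omega_i}S_0(u)\,(x\cdot n_i)\,dS$, where $n_i$ is the outward unit normal of $\Omega_i$; this is legitimate since $\rho\in L^\infty$ forces $S_1=-\tfrac{1}{8\pi|x|}*\rho\in W^{2,q}_{\mathrm{loc}}\hookrightarrow C^{1,\beta}_{\mathrm{loc}}$ and $S_0(u)\in W^{2,(p+1)/2}_{\mathrm{loc}}\hookrightarrow C^{0,\beta}_{\mathrm{loc}}$ for $p>2$, so the trace is well defined and the bootstrap giving $u\in C^{2,\beta}_{\mathrm{loc}}$ is unaffected. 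With these definitions the Nehari and Pohozaev identities are rederived exactly as in Lemma~\ref{lem:2.5}: the only change is that in the analogue of the computation in Lemma~\ref{lem:2.1}(ii) for $S_1$, the distributional identity $\nabla\chi_{\Omega_i}=-n_i\,d\mathcal{H}^2|_{\partial\Omega_i}$ turns $\int S_0(u)\,x\cdot\nabla\rho\,dx$ into $-\sum_i\alpha_i\int_{\partial\Omega_i}S_0(u)(x\cdot n_i)\,dS$. Hence $J=2N-P$, the set $\CM$, and Lemmas~\ref{lem:3.1}--\ref{lem:3.5}, \ref{lem:3.9} carry over once the estimate replacing Lemma~\ref{lem:2.2} is in hand.

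That estimate is $|E_1(u)|+|E_2(u)|\le C\bigl(\sum_i\alpha_iD(\Omega_i)\bigr)\bigl(\|u\|_2^2+\|u\|_{p+1}^2\bigr)$, and this is where Proposition~\ref{prop:7.1} enters. For $E_2$ one uses $|x\cdot n_i|\le L_i$ on $\partial\Omega_i$, H\"older on the surface, and then the sharp boundary trace inequality applied to $S_0(u)$ on $\Omega_i$; combined with $\int_{\Omega_i}|S_0(u)|^2\le|\Omega_i|^{2/3}\|S_0(u)\|_6^2$ and the bounds $\|S_0(u)\|_6+\|\nabla S_0(u)\|_2\le C(\|u\|_2^2+\|u\|_{p+1}^2)$ from Lemma~\ref{lem:2.2}, the geometric factors collect into $L_i|\Omega_i|^{1/6}|\partial\Omega_i|^{1/2}\bigl(\kappa_1(\Omega_i)|\Omega_i|^{1/3}+\kappa_2(\Omega_i)\bigr)^{1/2}\le D(\Omega_i)$. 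For $E_1$ it suffices to use $\|\chi_{\Omega_i}\|_{6/5}\le CD(\Omega_i)$ from Remark~\ref{rem:7.2}. The auxiliary quantity that played the role of $E_3$ in Lemmas~\ref{lem:3.4}--\ref{lem:3.5} is handled the same way, its (more singular) boundary version being controlled by the same trace inequality together with the curvature bound appearing below.

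The crux is the analogue of the energy inequality Lemma~\ref{lem:3.7}. Following Step~1 of its proof with $u_\la(x)=\la^2u(\la x)$ and $\rho(\la^{-1}x)=\chi_{\la\Omega_i}(x)$, one writes $I(u)-I(u_\la)-\tfrac{1-\la^3}{3}J(u)$ as the same polynomial-in-$\la$ multiples of $\omega\|u\|_2^2$ and $\|u\|_{p+1}^{p+1}$ as in \ef{eq:3.14}, plus a remainder $R(\la,u)=e^2\sum_i\alpha_iM_i(\la)$, where $M_i(\la)$ is a combination, with $\la$-polynomial coefficients, of $\int_{\Omega_i}S_0(u)\,dx$, $\int_{\partial\Omega_i}S_0(u)(x\cdot n_i)\,dS$ and $\psi_i(\la):=\la^{-1}\Phi_i(\la)$ with $\Phi_i(\la):=\int_{\la\Omega_i}S_0(u)\,dx$; as before one checks $M_i(1)=M_i'(1)=0$. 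On $0\le\la\le\tfrac12$ and on $\la\ge T$ the estimate is crude: $\psi_i(\la)\ge0$ and $\int_{\Omega_i}S_0(u)\ge0$ since $S_0(u)\ge0$, and the boundary contribution is bounded by $L_i\int_{\partial\Omega_i}S_0(u)\,dS$, which gives $R(\la,u)\ge-\beta(1-\la)^2e^2\sum_i\alpha_iD(\Omega_i)(\|u\|_2^2+\|u\|_{p+1}^2)$ on $[0,\tfrac12]$ and $\ge-\beta\la^3e^2\sum_i\alpha_iD(\Omega_i)(\dots)$ on $[T,\infty)$. On $\tfrac12\le\la\le T$ one Taylor-expands $M_i$ to second order about $\la=1$; since the transport theorem gives $\Phi_i'(\la)=\la^{-1}\int_{\partial(\la\Omega_i)}(x\cdot n)S_0(u)\,dS$, the derivative $M_i''$ involves the \emph{second} variation of the moving domain $\la\Omega_i$. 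By Hadamard's formula (calculus of moving surfaces) this reduces, after a surface integration by parts that removes the tangential derivatives of $S_0(u)$ and the divergence theorem applied to the normal part (trading $\partial_nS_0(u)$ for $\|\nabla S_0(u)\|_2$ and $\int|u|^2$ through a suitable extension of $(x\cdot n)^2$ whose norms are controlled by $L_i$, $|\partial\Omega_i|$ and the curvature), to integrals over $\partial(\la\Omega_i)$ of the trace of $S_0(u)$ and of $\nabla S_0(u)$ weighted by $x\cdot n$, by the tangential component of $x$, and by the mean curvature $H_i$ of $\partial\Omega_i$. Bounding these by H\"older and Proposition~\ref{prop:7.1} applied to $S_0(u)$ and collecting the scale factors $|\Omega_i|^{1/6}$, $|\partial\Omega_i|^{1/2}$, $L_i$, $\|H_i\|_{L^2(\partial\Omega_i)}$, $\kappa_1(\Omega_i)$, $\kappa_2(\Omega_i)$, one finds that the curvature terms reproduce the $L_i^2\|H_i\|_{L^2(\partial\Omega_i)}$ factor and the rest the $L_i|\partial\Omega_i|^{1/2}$ factor of $D(\Omega_i)$, whence $\tfrac12\sup_{[1/2,T]}|M_i''|\le CD(\Omega_i)(\|u\|_2^2+\|u\|_{p+1}^2)$. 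Carrying out Hadamard's second-variation formula for the dilation flow and matching its curvature and trace contributions term by term to the definition of $D(\Omega)$ is the main obstacle; everything else is bookkeeping.

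Once the analogue of Lemma~\ref{lem:3.7}, in particular of \ef{eq:3.13}, is established, the rest runs without change: Lemma~\ref{lem:3.9} gives the unique critical point of the fibering map, the concentration--compactness argument of Proposition~\ref{prop:4.2} produces a minimizer $u_0$ of $\sigma$ (the strict inequality $\sigma<\sigma_\infty$ of Lemma~\ref{lem:4.1} again ruling out vanishing), Proposition~\ref{prop:3.6} identifies $u_0$ as a ground state, and the real-valuedness argument in the proof of Theorem~\ref{thm:1.1} is unaffected. For the last assertion (the analogue of Theorem~\ref{thm:1.2}) one additionally invokes the $c_\infty$-minimizer facts from \cite{CW4}, which require only smallness of $e^2\|\rho\|_{6/5}=e^2\sum_i\alpha_i\|\chi_{\Omega_i}\|_{6/5}$; by Remark~\ref{rem:7.2} this is implied by the hypothesis $e^2\sum_i\alpha_iD(\Omega_i)\le\rho_0$, so Propositions~\ref{prop:5.1} and~\ref{prop:5.2} go through verbatim, completing the proof.
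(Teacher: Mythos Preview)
Your proposal is correct and follows the same overall strategy as the paper: redefine $E_2$ (and an $E_3$) as boundary integrals, control them via the sharp trace inequality of Proposition~\ref{prop:7.1} so that $\sum_i\alpha_i D(\Omega_i)$ replaces the $L^{6/5}$-norms of $\rho$, $x\cdot\nabla\rho$, $x\cdot(D^2\rho\,x)$, establish the energy inequality analogue of Lemma~\ref{lem:3.7}, and then rerun Sections~3--5 verbatim.

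The one place where the paper is cleaner than what you sketch is the second variation step you flag as ``the main obstacle.'' You anticipate that $M_i''$ will contain a normal-derivative term $\partial_n S_0(u)$ on $\partial\Omega_i$, to be disposed of by extending $(x\cdot n)^2$ into $\Omega_i$ and integrating by parts. The paper avoids this entirely: setting $\Omega_i(\la)=\int_{\la\Omega_i}S_0(u)\,dx$, it computes (Lemma~\ref{lem:7.7}) via the surface divergence theorem that
\[
\Omega_i''(\la)=-2\la\int_{\partial\Omega_i}S_0(u)(\la y)(y\cdot n_i)\,dS-\la\int_{\partial\Omega_i}H_i(y)\,S_0(u)(\la y)(y\cdot n_i)^2\,dS,
\]
so only \emph{traces} of $S_0(u)$ appear, weighted by $y\cdot n_i$ and by the mean curvature. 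This is precisely why $D(\Omega)$ has the form it does, and it makes the Taylor bound on $[\tfrac12,T]$ a direct application of Proposition~\ref{prop:7.1} with no extension argument needed. With this formula in hand, the definition $E_3(u):=-\tfrac12\sum_i\alpha_i\int_{\partial\Omega_i}H_i\,S_0(u)(x\cdot n_i)^2\,dS$ is forced, and the estimate $|E_3(u)|\le C\sum_i\alpha_i D(\Omega_i)\|\nabla S_0(u)\|_2$ follows exactly as for $E_2$. Everything else in your outline matches the paper.
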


Note that when $1<p<2$, we have only assumed that
$\rho(x) \in L^{\frac{6}{5}}(\R^3)$ and $\rho(x)=\rho(|x|)$ in Theorem \ref{thm:6.1},
which covers the case \ef{eq:7.1}.

\smallskip
We mention that the first part $x \cdot \nabla \rho(x)$ 
and $x \cdot (D^2 \rho(x) x)$ appeared
was the definition of $E_2(u)$ and $E_3(u)$ in \ef{eq:2.5}.
Under the assumption \ef{eq:7.1}, we replace them by
\begin{align*}
E_1(u) &= - \frac{1}{4} \sum_{i=1}^m \alpha_i 
\int_{\Omega_i} S_0(u) \,dx, \\
E_2(u) &:= - \frac{1}{2} \sum_{i=1}^m \alpha_i 
\int_{\partial \Omega_i} S_0(u) x \cdot n_i \,dS_i, \\
E_3(u) &:= - \frac{1}{2} \sum_{i=1}^m \alpha_i 
\int_{\partial \Omega_i} H_i(x) S_0(u) (x \cdot n_i)^2 \,dS_i, 
\end{align*}
where $n_i$ is the unit outward normal on $\partial \Omega_i$.
Indeed we have the following.

\begin{lemma} \label{lem:7.4}
It holds that
\begin{align*}
\lim_{R \to \infty} \int_{B_R(0)} S_0(u) u x \cdot \nabla \bar{u} \,dx 
&= -10 E_1(u) + E_2(u), \\
\lim_{R \to \infty} \int_{B_R(0)} S_1(u) u x \cdot \nabla \bar{u} \,dx 
&= - 6 E_2(u) - E_3(u).
\end{align*}

\end{lemma}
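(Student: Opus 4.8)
The strategy mirrors the computations in Lemma \ref{lem:2.1}(ii), but now $\rho$ is only piecewise constant, so derivatives of $\rho$ produce surface terms via the divergence theorem instead of bulk terms involving $\nabla\rho$ or $D^2\rho$. First I would treat the term $\int_{B_R(0)} S_0(u)\,u\,x\cdot\nabla\bar u\,dx$. As in Lemma \ref{lem:2.1}(ii), integrating by parts and using $S_0(u)|u|^2\in L^1(\R^3)$ together with $-\Delta S_0(u)=\tfrac12|u|^2$, one reduces it to $-\tfrac32\int_{\R^3}S_0(u)|u|^2\,dx-\tfrac12\int_{\R^3}|u|^2\,x\cdot\nabla S_0(u)\,dx$, and the same identity $\int_{\R^3}|u|^2\,x\cdot\nabla S_0(u)\,dx=-\tfrac12\int_{\R^3}S_0(u)|u|^2\,dx$ carries over verbatim since it only uses properties of $S_0(u)$. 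Hence this limit equals $-\tfrac54\int_{\R^3}S_0(u)|u|^2\,dx$. Then I would rewrite $-\tfrac54\int_{\R^3}S_0(u)|u|^2$ in terms of $E_1$ and $E_2$ using $-\Delta S_0(u)=\tfrac12|u|^2$ and the equation $-\Delta S_1=-\tfrac12\rho=-\tfrac12\sum_i\alpha_i\chi_{\Omega_i}$: integrating $\int \nabla S_0(u)\cdot\nabla S_1$ in two ways shows $\tfrac12\int_{\R^3}S_0(u)|u|^2$ equals, up to a sign, a combination of $\int_{\R^3}S_1|u|^2=4E_1(u)$ and a boundary integral $\sum_i\alpha_i\int_{\partial\Omega_i}(\partial S_0(u)/\partial n_i)\,dS_i$; the latter is handled by the divergence theorem relating $\int_{\partial\Omega_i}\partial_{n_i}S_0(u)\,dS_i$ to $\int_{\Omega_i}\Delta S_0(u)=-\tfrac12\int_{\Omega_i}|u|^2$, which reassembles into $E_1$. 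The cleanest route, though, is to compute $\int_{\R^3}|u|^2\,x\cdot\nabla S_1\,dx$ directly: writing $\tfrac12\int|u|^2\,x\cdot\nabla S_1=\int\nabla S_0(u)\cdot\nabla(x\cdot\nabla S_1)$ and expanding $\nabla(x\cdot\nabla S_1)=\nabla S_1+D^2S_1\,x$, then integrating by parts and using $\Delta S_1=\tfrac12\rho$, the distributional gradient of $\rho=\sum_i\alpha_i\chi_{\Omega_i}$ is $\nabla\rho=-\sum_i\alpha_i n_i\,dS_i$ (a surface measure), which produces exactly $E_1(u)$ and $E_2(u)=-\tfrac12\sum_i\alpha_i\int_{\partial\Omega_i}S_0(u)\,x\cdot n_i\,dS_i$. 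Matching coefficients gives $-\tfrac54\int S_0(u)|u|^2=-10E_1(u)+E_2(u)$, which is the first identity.

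For the second identity $\int_{B_R(0)}S_1(u)\,u\,x\cdot\nabla\bar u\,dx$ I would follow the same pattern as the $S_1$-line in Lemma \ref{lem:2.1}(ii): the divergence-theorem reduction gives $-\tfrac32\int_{\R^3}S_1|u|^2\,dx-\tfrac12\int_{\R^3}|u|^2\,x\cdot\nabla S_1\,dx$, where the first term is $-6E_1(u)$ and the second was just computed. But now one iterates once more: $\tfrac12\int|u|^2\,x\cdot\nabla S_1=\int\nabla S_0(u)\cdot\nabla(x\cdot\nabla S_1)$, and after integrating by parts one meets $\int S_0(u)\,x\cdot\nabla\rho$ as a surface integral $-\sum_i\alpha_i\int_{\partial\Omega_i}S_0(u)\,x\cdot n_i\,dS_i=2E_2(u)$, together with a term $\int S_0(u)\,x\cdot(D^2\rho\,x)$. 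The latter is where the mean curvature enters: since $\nabla\rho=-\sum_i\alpha_i n_i\,dS_i$, taking one more distributional derivative along $\partial\Omega_i$ introduces the tangential divergence of the normal field, i.e.\ (twice) the mean curvature $H_i$; concretely $\int_{\R^3}f\,x\cdot(D^2\rho\,x)$ integrates by parts on each $\partial\Omega_i$ to $-\sum_i\alpha_i\int_{\partial\Omega_i}\big(\operatorname{div}_{\partial\Omega_i}(f\,x\,(x\cdot n_i))\big)\,dS_i$, and the curvature term of this tangential divergence yields the $E_3$ contribution $-\tfrac12\sum_i\alpha_i\int_{\partial\Omega_i}H_i(x)\,S_0(u)(x\cdot n_i)^2\,dS_i$. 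Collecting coefficients produces $-6E_2(u)-E_3(u)$.

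The main obstacle I expect is making the distributional-derivative manipulations of $\rho=\sum_i\alpha_i\chi_{\Omega_i}$ rigorous — in particular justifying the appearance of the mean curvature when differentiating the surface measure $n_i\,dS_i$, and controlling the boundary integrals on $\partial B_R(0)$ as $R\to\infty$ (which requires the decay of $S_0(u)$ and $\nabla S_0(u)$, available since $S_0(u)\in L^6$ with $\nabla S_0(u)\in L^2$, exactly as in \cite{BL,Ca}). A clean way to sidestep the first difficulty is to approximate $\chi_{\Omega_i}$ by smooth functions $\rho^\varepsilon_i$ supported in a tubular neighbourhood of $\Omega_i$, apply the already-proved Lemma \ref{lem:2.1}(ii) to $\rho^\varepsilon:=\sum_i\alpha_i\rho^\varepsilon_i$, and pass to the limit: the bulk integrals $\int S_0(u)\,x\cdot\nabla\rho^\varepsilon$ and $\int S_0(u)\,x\cdot(D^2\rho^\varepsilon\,x)$ then converge, by the coarea formula and the smoothness of $\partial\Omega_i$, precisely to the surface integrals defining the new $E_2(u)$ and $E_3(u)$ (the $D^2\rho^\varepsilon$ term concentrates on $\partial\Omega_i$ with weight governed by the second fundamental form, whose trace is $2H_i$). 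This reduces everything to Lemma \ref{lem:2.1}(ii) plus a standard limiting argument, and the coefficients $-10E_1+E_2$ and $-6E_2-E_3$ come out automatically.
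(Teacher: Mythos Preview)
There is a genuine problem, but it stems from a typo in the paper's statement rather than from your method. The identities as written cannot be true: the left-hand side of the first line is $\int S_0(u)\,u\,x\cdot\nabla\bar u$, which by your own (correct) computation equals $-\tfrac54\int S_0(u)|u|^2=-5D(u)$, a quantity that is quartic in $u$ and completely independent of $\rho$; the right-hand side $-10E_1(u)+E_2(u)$ is quadratic in $u$ and depends on $\rho$. No rewriting can make these equal, and your paragraph attempting to express $-5D(u)$ through $E_1,E_2$ via $\int\nabla S_0(u)\cdot\nabla S_1$ is chasing an impossibility. The same mismatch affects your treatment of the second line.

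If you look at the paper's proof, the very first displayed equality is
\[
\lim_{R\to\infty}\int_{B_R(0)} S_0(u)\,u\,x\cdot\nabla\bar u\,dx
=-\frac{1}{8\pi}\lim_{R\to\infty}\int_{\Omega}\int_{B_R(0)}\frac{u(y)\,y\cdot\nabla\overline{u(y)}}{|x-y|}\,dy\,dx,
\]
and the right-hand side is precisely $\int_{B_R(0)} S_1(y)\,u(y)\,y\cdot\nabla\bar u(y)\,dy$ (with variables swapped), not the $S_0(u)$-integral. So the intended statement is the $S_1$- and $S_2$-lines of Lemma~\ref{lem:2.1}(ii), now with the new surface-integral definitions of $E_2,E_3$. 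The paper's argument then proceeds quite differently from yours: instead of working with $\nabla S_1$ and distributional derivatives of $\rho$, it writes $S_1(y)=-\tfrac{1}{8\pi}\int_\Omega|x-y|^{-1}dx$ as an explicit double integral, integrates by parts in the $y$-variable on $\R^3$, uses the algebraic identity $y\cdot(x-y)=-|x-y|^2+x\cdot(x-y)$ to switch to an $x$-gradient, and then applies the divergence theorem over the \emph{bounded} region $\Omega$, which produces the boundary term $\int_{\partial\Omega}S_0(u)\,x\cdot n\,dS$ directly. The second identity is handled the same way starting from an integral over $\partial\Omega$, and the surface divergence theorem on the closed surface $\partial\Omega$ yields the mean-curvature term. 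This avoids entirely the need to interpret $\nabla\rho$ or $D^2\rho$ distributionally.

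Your smoothing/approximation route in the last paragraph would also work for the correct statement and is a legitimate alternative, but once you know the target identities involve $S_1$ (and its surface analogue), the paper's direct double-integral computation is both shorter and more transparent.
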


\begin{proof}
For simplicity, let us consider the case $m=1$ and $\alpha=1$.
First by the divergence theorem and the fact $S_0(u)|u|^2 \in L^1(\R^3)$, 
one finds that
\[
\begin{aligned}
&\lim_{R \to \infty} \int_{B_R(0)} S_0(u) u x \cdot \nabla \bar{u} \,dx 
= -\frac{1}{8 \pi} \lim_{R \to \infty} \int_{\Omega} \int_{B_R(0)} 
\frac{u(y) y \cdot \nabla \overline{u(y)}}{|x-y|} \,d y \,d x \\
&= \frac{1}{16 \pi} \int_{\Omega} \int_{\mathbb{R}^3} 
\frac{|u(y)|^2 \operatorname{div}_y y}{|x-y|} \,d y \,d x
+\frac{1}{16 \pi} \int_{\Omega} \int_{\mathbb{R}^3} 
|u(y)|^2 y \cdot \nabla_y\left(\frac{1}{|x-y|}\right) \,d y \,dx \\
&= \frac{3}{16 \pi} \int_{\Omega} \int_{\mathbb{R}^3} 
\frac{|u(y)|^2}{|x-y|} \,d y \,d x
+\frac{1}{16 \pi} \int_{\Omega} \int_{\mathbb{R}^3}
|u(y)|^2 \frac{y \cdot(x-y)}{|x-y|^3} \,d y \,d x. 
\end{aligned}
\]
Using the identity $y \cdot (x-y) = -|x-y|^2+x \cdot (x-y)$,
the Fubini theorem and the divergence theorem, we get
\[
\begin{aligned}
&\lim_{R \to \infty} \int_{B_R(0)} S_0(u) u x \cdot \nabla \bar{u} \,dx \\
&= \frac{1}{8 \pi} \int_{\Omega} \int_{\mathbb{R}^3} 
\frac{|u(y)|^2}{|x-y|} \,d y \,d x
-\frac{1}{16 \pi} \int_{\mathbb{R}^3} \int_{\Omega}
|u(y)|^2 x \cdot \nabla_x \left(\frac{1}{|x-y|}\right) \,d x \,d y \\
&= \frac{1}{8 \pi} \int_{\Omega} \int_{\R^3} 
\frac{|u(y)|^2}{|x-y|} \,dy \,dx 
-\frac{1}{16 \pi} \int_{\mathbb{R}^3} \int_{\Omega} 
\operatorname{div}_x \left( \frac{|u(y)|^2 x}{|x-y|}\right) \,d x \,d y 
+\frac{3}{16 \pi} \int_{\R^3} \int_{\Omega} \frac{|u(y)|^2}{|x-y|} \,d x \,d y \\
&= \frac{5}{16 \pi} \int_{\Omega} \int_{\mathbb{R}^3} 
\frac{|u(y)|^2}{|x-y|} \,d y \,d x
-\frac{1}{16 \pi} \int_{\mathbb{R}^3} \int_{\Omega} 
\frac{|u(y)|^2}{|x-y|} x \cdot n \,d S \,d y \\
&= \frac{5}{2} \int_{\Omega} S_0(u) \,d x
-\frac{1}{2} \int_{\partial \Omega} S_0(u) x \cdot n \,d S
=-10 E_1(u)+ E_2(u).
\end{aligned}
\]
Similarly, we have
\begin{align} \label{eq:7.3}
&\lim_{R \to \infty} \int_{B_R(0)} S_1(u) u x \cdot \nabla \bar{u} \,dx \notag \\
&= -\frac{1}{8 \pi} \lim_{R \to \infty} \int_{\partial \Omega} \int_{B_R(0)} 
\frac{u(y) y \cdot \nabla \overline{u(y)}}{|x-y|} x \cdot n \,dy \,dS \notag \\
&= \frac{1}{8 \pi} \int_{\partial \Omega} \int_{\mathbb{R}^3} 
\frac{|u(y)|^2 x \cdot n}{|x-y|} \,dy \,dS
-\frac{1}{16 \pi} \intR \int_{\partial \Omega} 
|u(y)|^2 x \cdot \nabla_x \left( \frac{1}{|x-y|} \right) x \cdot n \,dS \,dy \notag \\
&= \frac{1}{8 \pi} \int_{\partial \Omega} \int_{\mathbb{R}^3} 
\frac{|u(y)|^2 x \cdot n}{|x-y|} \,dy \,dS
-\frac{1}{16 \pi} \intR \int_{\partial \Omega}
\operatorname{div}_x 
\left( \frac{|u(y)|^2 (x \cdot n) x}{|x-y|} \right) \,dS \,dy \notag \\
&\quad +\frac{3}{16 \pi} \intR \int_{\partial \Omega} 
\frac{|u(y)|^2 x \cdot n}{|x-y|} \,dS \,dy
+\frac{1}{16 \pi} \intR \int_{\partial \Omega} 
\frac{|u(y)|^2 x \cdot \nabla_x (x \cdot n)}{|x-y|} \,dS \,dy \notag \\
&= \frac{5}{2} \int_{\partial \Omega} S_0(u) x \cdot n \,d S
-\frac{1}{2} \int_{\partial \Omega} 
\operatorname{div}_x 
\big( S_0(u) (x \cdot n) x \big) \,dS
+\frac{1}{2} \int_{\partial \Omega} 
S_0(u) x \cdot \nabla_x (x \cdot n) \,dS. 
\end{align}
Applying the surface divergence theorem (see e.g. \cite[7.6]{Si})
and noticing that $\partial (\partial \Omega)= \emptyset$, it follows that
\[
\begin{aligned}
\int_{\partial \Omega} \operatorname{div}_x (S_0(u) (x \cdot n) x) \,d S 
& =-\int_{\partial \Omega} \big( S_0(u)(x \cdot n) x \big)^{\perp} \cdot \vec{H} \,d S 
 =-\int_{\partial \Omega} H(x)S_0(u) (x \cdot n)^2 \,d S,
\end{aligned}
\]
where $x^{\perp}$ is the normal component of $x$ and 
$\vec{H}$ is the mean curvature vector $\vec{H}= H n$.
Finally since $x \cdot \nabla_x (x \cdot n)= x \cdot n$, 
we deduce from \ef{eq:7.3} that
\[
\begin{aligned}
&\lim_{R \to \infty} \int_{B_R(0)} S_1(u) u x \cdot \nabla \bar{u} \,dx \\
&= 3 \int_{\partial \Omega} S_0(u) x \cdot n \,d S
+\frac{1}{2} \int_{\partial \Omega} H(x) S_0(u) (x \cdot n)^2 \,d S =-6 E_2(u)-E_3(u),
\end{aligned}
\]
which ends the proof.
The general case can be shown by summing up the integrals.
\end{proof}

By Lemma \ref{lem:7.4}, the Pohozev identity can be reformulated as follows.

\begin{lemma} \label{lem:7.5}
Under the assumption \ef{eq:7.1}, 
the functionals $P(u)$ defined in \ef{eq:2.10} and $Q(u)$ in \ef{eq:3.7} 
have the same form.
\end{lemma}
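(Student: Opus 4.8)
The plan is to re-run the multiplier computation behind Lemma \ref{lem:2.5}, using Lemma \ref{lem:7.4} in place of Lemma \ref{lem:2.1}(ii) for every term in which the doping profile enters, and then to read off that the resulting identities are literally \ef{eq:2.10} and \ef{eq:3.7} with the new boundary definitions of $E_2$ and $E_3$.

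First I would dispatch the regularity question. A doping profile of the form \ef{eq:7.1} lies in $L^\infty(\R^3)\subset L^q_{loc}(\R^3)$ for every $q<\infty$, so $S_1=-\frac{1}{8\pi|x|}*\rho\in W^{2,q}_{loc}(\R^3)\hookrightarrow C^{1,\alpha}_{loc}(\R^3)$ for all $\alpha\in(0,1)$; likewise $S_2=\frac{1}{8\pi|x|}*(x\cdot\nabla\rho)$ is the single-layer potential of the bounded density $-\sum_i\alpha_i(x\cdot n_i)$ carried by the smooth compact surfaces $\partial\Omega_i$, hence continuous on $\R^3$ and smooth off $\bigcup_i\partial\Omega_i$, while $S_0(u)\in C^{0,\alpha}_{loc}(\R^3)$ as before. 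Therefore any weak solution of \ef{eq:1.1}, and any weak solution of the equation \ef{eq:3.5}, is $C^{1,\alpha}_{loc}(\R^3)$ by the bootstrap used in the proof of Lemma \ref{lem:2.5}, so multiplying by $x\cdot\nabla\bar u$ and integrating over $B_R(0)$ is justified.

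Next I would derive the two identities. Multiplying \ef{eq:1.1} by $x\cdot\nabla\bar u$, taking real parts, integrating over $B_R(0)$ and letting $R\to\infty$, the Laplacian, mass and power terms give $\frac{1}{2}A(u)$, $\frac{3\omega}{2}B(u)$ and $-\frac{3}{p+1}C(u)$ exactly as in \cite{BL, Ca}; the nonlocal term $e\phi u=e^2S_0(u)u+e^2S_1u$ contributes $-5e^2D(u)$ from $\lim_{R\to\infty}\int_{B_R(0)}S_0(u)u\,x\cdot\nabla\bar u\,dx$ (Lemma \ref{lem:2.1}(ii), which is insensitive to $\rho$) and $-10e^2E_1(u)+e^2E_2(u)$ from $\lim_{R\to\infty}\int_{B_R(0)}S_1u\,x\cdot\nabla\bar u\,dx$ (Lemma \ref{lem:7.4}). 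Collecting terms gives precisely $P(u)=0$ with $P$ as in \ef{eq:2.10}. For $Q$, I would take the equation \ef{eq:3.5}, whose nonlocal coefficients are unchanged because the first-order relations $E_1'(u)\varphi=\frac{1}{2}\int_{\R^3} S_1u\bar\varphi\,dx$ and $E_2'(u)\varphi=\int_{\R^3} S_2u\bar\varphi\,dx$ still hold by a Fubini argument analogous to Lemma \ref{lem:2.1}(i), and multiply it by $x\cdot\nabla\bar u$: the $S_0(u)u$ piece again gives $-15e^2D(u)$, while Lemma \ref{lem:7.4} converts the $-e^2S_1u$ and $e^2S_2u$ pieces into $10e^2E_1(u)-e^2E_2(u)$ and $-6e^2E_2(u)-e^2E_3(u)$. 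Summing yields $Q(u)=0$ with $Q$ as in \ef{eq:3.7}.

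The one genuinely delicate step is the limit $R\to\infty$ in the nonlocal terms across the interfaces $\partial\Omega_i$, where $\rho$ jumps and termwise integration by parts is unavailable; but that is exactly the content of Lemma \ref{lem:7.4}, obtained there through the Fubini theorem, the divergence theorem, and the surface divergence theorem together with $\partial(\partial\Omega_i)=\emptyset$. So here what remains is only the coefficient bookkeeping above and the regularity check. Since the Nehari identity \ef{eq:2.9} is unchanged (it only involves $E_1$, which is merely the rewriting $-\frac{1}{4}\sum_i\alpha_i\int_{\Omega_i}S_0(u)\,dx$ of $-\frac{1}{4}\int_{\R^3}S_0(u)\rho\,dx$), the conclusion that $P$ and $Q$ keep the forms \ef{eq:2.10} and \ef{eq:3.7} is enough to make the whole algebraic apparatus of Sections 3--5 apply verbatim, which is what Theorem \ref{thm:7.3} requires.
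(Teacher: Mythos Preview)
Your proposal is correct and follows essentially the same route as the paper: multiply the relevant equations by $x\cdot\nabla\bar u$, integrate over $B_R(0)$, let $R\to\infty$, and invoke Lemma~\ref{lem:7.4} for the nonlocal terms carrying the doping profile, while the $S_0(u)u$ contribution is handled by Lemma~\ref{lem:2.1}(ii) as before. The paper's own proof is a two-line sketch pointing back to Lemma~\ref{lem:2.5} and Lemma~\ref{lem:7.4}; your version simply makes explicit the regularity check for $S_1$ and $S_2$ under \ef{eq:7.1}, the persistence of the Fr\'echet derivative formulas for $E_1$ and $E_2$ (needed so that \ef{eq:3.5} still arises from $J'(u)=0$), and the coefficient bookkeeping for $Q$, all of which the paper leaves implicit.
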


\begin{proof}
As we have mentioned in the proof of Lemma \ref{lem:2.5}, 
the Pohozaev identity can be shown by multiplying $x \cdot \nabla \bar{u}$
and $ex \cdot S_0(u)$ by \ef{eq:1.1}, 
integrating them over $B_R(0)$ and passing to a limit $R \to \infty$.
Then we are able to obtain \ef{eq:2.10} and \ef{eq:3.7} by Lemma \ref{lem:7.4}.
\end{proof}

Next we establish estimates for $E_1$, $E_2$ and $E_3$.

\begin{lemma} \label{lem:7.6}
For any $u \in H^1(\R^3,\C)$, $E_1$, $E_2$ and $E_3$ satisfy the estimates:
\[
\begin{aligned}
|E_1(u)| &\le C \sum_{i=1}^m \alpha_i |\Omega_i|^{\frac{5}{6}}
\| \nabla S_0(u) \|_{2}, \\
|E_2(u)| &\le C \sum_{i=1}^m \alpha_i D(\Omega_i)
\| \nabla S_0(u) \|_{2}, \\
|E_3(u)| &\le C \sum_{i=1}^m \alpha_i D(\Omega_i)
\| \nabla S_0(u) \|_{2},
\end{aligned}
\]
where $C>0$ is a constant independent of $\Omega_i$.
\end{lemma}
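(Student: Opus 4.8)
The plan is to estimate each $E_i(u)$ directly from its new definition, reducing everything to (i) a $L^6$-bound on $S_0(u)$ controlled by $\|\nabla S_0(u)\|_2$ via Sobolev, and (ii) a control of the boundary integrals of $S_0(u)$ over $\partial\Omega_i$ by the trace inequality of Proposition \ref{prop:7.1}. Since $\rho$ is a finite sum $\sum_i\alpha_i\chi_{\Omega_i}$, it suffices to treat a single domain $\Omega=\Omega_i$ with $\alpha_i=1$ and then sum; I will carry the estimates out for one $\Omega$ and write $L=\sup_{x\in\partial\Omega}|x|$.

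First I would handle $E_1$. By definition $|E_1(u)|\le\frac14\int_\Omega|S_0(u)|\,dx\le\frac14|\Omega|^{5/6}\|S_0(u)\|_{L^6(\Omega)}\le C|\Omega|^{5/6}\|\nabla S_0(u)\|_2$, using H\"older with exponents $6$ and $6/5$ and the Sobolev inequality $\|S_0(u)\|_6\le C\|\nabla S_0(u)\|_2$ (valid on $\R^3$, and a fortiori on $\Omega$). This is exactly the claimed bound, and it already shows that $|\Omega|^{5/6}=\|\chi_\Omega\|_{L^{6/5}}$ is the relevant quantity, consistent with Remark \ref{rem:7.2} and \eqref{eq:7.2}.

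Next, for $E_2$ we start from $|E_2(u)|\le\frac12\int_{\partial\Omega}|S_0(u)|\,|x\cdot n|\,dS\le\frac{L}{2}\int_{\partial\Omega}|S_0(u)|\,dS$, since $|x\cdot n|\le|x|\le L$ on $\partial\Omega$. Then H\"older on $\partial\Omega$ with exponents $3$ and $3/2$ gives $\int_{\partial\Omega}|S_0(u)|\,dS\le|\partial\Omega|^{1/3}\big(\int_{\partial\Omega}|S_0(u)|^{3/2}\,dS\big)^{2/3}$; but the cleaner route, matching the shape of $D(\Omega)$, is to use H\"older with exponents $2$ and $2$ after inserting one factor of $L^{|\Omega|^{1/6}}$... more precisely I would write $\int_{\partial\Omega}|S_0(u)|\,dS\le|\partial\Omega|^{1/2}\big(\int_{\partial\Omega}|S_0(u)|^2\,dS\big)^{1/2}$ and then apply Proposition \ref{prop:7.1} to $S_0(u)$:
\[
\int_{\partial\Omega}|S_0(u)|^2\,dS\le\kappa_1(\Omega)\|S_0(u)\|_{L^2(\Omega)}^2+\kappa_2(\Omega)\|S_0(u)\|_{L^2(\Omega)}\|\nabla S_0(u)\|_{L^2(\Omega)}.
\]
Estimating $\|S_0(u)\|_{L^2(\Omega)}\le|\Omega|^{1/3}\|S_0(u)\|_{L^6(\Omega)}\le C|\Omega|^{1/3}\|\nabla S_0(u)\|_2$ and $\|\nabla S_0(u)\|_{L^2(\Omega)}\le\|\nabla S_0(u)\|_2$, the right-hand side is bounded by $C\big(\kappa_1(\Omega)|\Omega|^{2/3}+\kappa_2(\Omega)|\Omega|^{1/3}\big)\|\nabla S_0(u)\|_2^2=C|\Omega|^{1/3}\big(\kappa_1(\Omega)|\Omega|^{1/3}+\kappa_2(\Omega)\big)\|\nabla S_0(u)\|_2^2$. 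Taking square roots and reassembling, $\int_{\partial\Omega}|S_0(u)|\,dS\le C|\partial\Omega|^{1/2}|\Omega|^{1/6}\big(\kappa_1(\Omega)|\Omega|^{1/3}+\kappa_2(\Omega)\big)^{1/2}\|\nabla S_0(u)\|_2$, and multiplying by $L/2$ gives $|E_2(u)|\le C\,L|\Omega|^{1/6}|\partial\Omega|^{1/2}\big(\kappa_1(\Omega)|\Omega|^{1/3}+\kappa_2(\Omega)\big)^{1/2}\|\nabla S_0(u)\|_2$, which is $\le C\,D(\Omega)\|\nabla S_0(u)\|_2$ since $|\partial\Omega|^{1/2}\le L\|H\|_{L^2(\partial\Omega)}+|\partial\Omega|^{1/2}$.

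Finally, for $E_3$ the same scheme applies with an extra curvature factor: $|E_3(u)|\le\frac12\int_{\partial\Omega}|H(x)|\,|S_0(u)|\,(x\cdot n)^2\,dS\le\frac{L^2}{2}\int_{\partial\Omega}|H(x)|\,|S_0(u)|\,dS$, and H\"older (exponents $2$ and $2$) gives $\int_{\partial\Omega}|H|\,|S_0(u)|\,dS\le\|H\|_{L^2(\partial\Omega)}\big(\int_{\partial\Omega}|S_0(u)|^2\,dS\big)^{1/2}$. Using the trace estimate for $\big(\int_{\partial\Omega}|S_0(u)|^2\,dS\big)^{1/2}$ exactly as above yields $|E_3(u)|\le C\,L^2\|H\|_{L^2(\partial\Omega)}|\Omega|^{1/6}\big(\kappa_1(\Omega)|\Omega|^{1/3}+\kappa_2(\Omega)\big)^{1/2}\|\nabla S_0(u)\|_2$, and since $L^2\|H\|_{L^2(\partial\Omega)}\le L\big(L\|H\|_{L^2(\partial\Omega)}+|\partial\Omega|^{1/2}\big)$ this is again bounded by $C\,D(\Omega)\|\nabla S_0(u)\|_2$. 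Summing over $i=1,\dots,m$ (with the weights $\alpha_i$) completes the proof. The only slightly delicate point is the bookkeeping to see that the powers of $L$, $|\Omega|$, $|\partial\Omega|$, $\|H\|_{L^2(\partial\Omega)}$ and $\kappa_j(\Omega)$ produced by the trace inequality and the two H\"older steps recombine precisely into the quantity $D(\Omega)$ as defined before the statement; there is no analytic obstacle beyond correctly applying Proposition \ref{prop:7.1} to $S_0(u)$ rather than to $u$ itself.
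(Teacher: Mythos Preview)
Your proof is correct and follows essentially the same route as the paper: H\"older plus Sobolev for $E_1$, and for $E_2$, $E_3$ a Cauchy--Schwarz step on $\partial\Omega$ followed by the trace inequality of Proposition~\ref{prop:7.1} applied to $S_0(u)$, then $\|S_0(u)\|_{L^2(\Omega)}\le|\Omega|^{1/3}\|S_0(u)\|_{L^6(\R^3)}\le C|\Omega|^{1/3}\|\nabla S_0(u)\|_2$. The only cosmetic difference is that for $E_2$ the paper applies Cauchy--Schwarz directly to the pair $(|S_0(u)|,|x|)$ and then bounds $(\int_{\partial\Omega}|x|^2\,dS)^{1/2}\le L|\partial\Omega|^{1/2}$, whereas you first take $|x\cdot n|\le L$ pointwise and then pair $(|S_0(u)|,1)$; both yield the same factor $L|\partial\Omega|^{1/2}$.
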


\begin{proof}
First we observe that 
\[
|E_1(u)| \le \frac{1}{4} \sum_{i=1}^m \alpha_i \int_{\Omega_i} |S_0(u)|\,dx
\le \frac{1}{4} \sum_{i=1}^m \alpha_i 
\left( \int_{\Omega_i} |S_0(u)|^6 \,dx \right)^{\frac{1}{6}}
\left( \int_{\Omega_i} \,dx \right)^{\frac{5}{6}},
\]
from which the estimate for $E_1$ can be obtained by the Sobolev inequality. 
Next by Proposition \ref{prop:7.1}, 
the H\"older inequality and the Sobolev inequality, one has
\begin{align*}
|E_2(u)| & \le \frac{1}{2} \sum_{i=1}^m \alpha_i
\int_{\partial \Omega_i} |S_0(u)| |x| \,dS_i \\
&\le \frac{1}{2} \sum_{i=1}^m \alpha_i 
\left( \int_{\partial \Omega_i} |S_0(u)|^2 \,dS_i \right)^{\frac{1}{2}}
\left( \int_{\partial \Omega_i} |x|^2 \,dS_i \right)^{\frac{1}{2}} \\
&\le \frac{1}{2} \sum_{i=1}^m \alpha_i L_i | \partial \Omega_i|^{\frac{1}{2}}
\left( \kappa_1(\Omega_i) \| S_0(u) \|_{L^2(\Omega_i)}^2
+\kappa_2(\Omega_i) \| S_0(u) \|_{L^2(\Omega_i)}
\| \nabla S_0(u) \|_{L^2(\Omega_i)} \right)^{\frac{1}{2}} \\
\hspace{-1em} &\le \frac{1}{2} \sum_{i=1}^m \alpha_i L_i |\partial \Omega_i|^{\frac{1}{2}}
\left( \kappa_1(\Omega_i) |\Omega_i|^{\frac{2}{3}} \| S_0(u) \|_{L^6(\R^3)}^2
+\kappa_2(\Omega_i) | \Omega_i |^{\frac{1}{3}} \| S_0(u) \|_{L^6(\R^3)}
\| \nabla S_0(u) \|_{L^2(\R^3)} \right)^{\frac{1}{2}} \\
&\le C \sum_{i=1}^m \alpha_i L_i |\Omega_i|^{\frac{1}{6}}
|\partial \Omega_i|^{\frac{1}{2}} 
\left( \kappa_1(\Omega_i)|\Omega_i|^{\frac{1}{3}} 
+\kappa_2(\Omega_i) \right)^{\frac{1}{2}} 
\| \nabla S_0(u) \|_{L^2(\R^3)} \\
&\le C \sum_{i=1}^m \alpha_i D(\Omega_i) 
\| \nabla S_0(u) \|_{L^2(\R^3)}.
\end{align*}
Similarly, we obtain
\[
\begin{aligned}
|E_3(u)| &\le \frac{1}{2} \sum_{i=1}^m \alpha_i \int_{\partial \Omega_i}
|H_i| |S_0(u)| |x|^2 \,d S_i \\
&\le \frac{1}{2} \sum_{i=1}^m \alpha_i L_i^2 \|H_i\|_{L^2(\partial \Omega_i)}
\|S_0(u)\|_{L^2(\partial \Omega_i)} \\
&\le C \sum_{i=1}^m \alpha_i L_i^2 \| H_i \|_{L^2(\partial \Omega_i)}
| \Omega_i |^{\frac{1}{6}} \left( \kappa_1(\Omega_i)|\Omega_i|^{\frac{1}{3}} 
+\kappa_2(\Omega_i) \right)^{\frac{1}{2}} 
\| \nabla S_0(u) \|_{L^2(\R^3)} \\
&\le C \sum_{i=1} \alpha_i D(\Omega_i) \| \nabla S_0(u)\|_{L^2(\R^3)}.
\end{aligned}
\]
This completes the proof.
\end{proof}

Our next step is to modify the proof of the energy inequality in Lemma \ref{lem:3.7}.
For this purpose, we prove the following.

\begin{lemma} \label{lem:7.7}
Let $\Omega \subset \R^3$ be a bounded domain with smooth boundary
and put
\[
\Omega(\la) := \int_{\la \Omega} S_0(u)(x) \,dx
= \la^3 \int_{\Omega} S_0(u)(\la y) \,dy.
\]
Then it holds that
\[
\begin{aligned}
\Omega'(\la) &= \la^2 \int_{\partial \Omega} S_0(u)(\la y) (y \cdot n) \,dS, \\
\Omega''(\la) &= -2\la \int_{\partial \Omega} S_0(u)(\la y) (y \cdot n) \,dS
- \la \int_{\partial \Omega} H(y) S_0(u)(\la y) (y \cdot n)^2 \,dS.
\end{aligned}
\]

\end{lemma}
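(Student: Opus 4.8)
The plan is to treat $\Omega(\la)$ as a $C^{2}$ function of $\la>0$ and differentiate it, either by differentiating under the integral sign in the fixed–domain representation $\Omega(\la)=\la^{3}\int_{\Omega}S_{0}(u)(\la y)\,dy$, or equivalently by viewing $\la\mapsto\la\Omega$ as a moving domain and invoking Hadamard's shape–derivative formulas; the second point of view is the calculus of moving surfaces already used in Lemma~\ref{lem:7.4}. The only regularity needed is $S_{0}(u)\in C^{1}(\R^{3})$ — which follows from $-\Delta S_{0}(u)=\frac12|u|^{2}\in L^{3}_{loc}$ and elliptic regularity, $S_{0}(u)\in W^{2,3}_{loc}\hookrightarrow C^{1,\alpha}_{loc}$ — together with smoothness of $\partial\Omega$, so that all traces on $\partial\Omega$ are well defined.

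For $\Omega'(\la)$ I would set $G(\la):=\int_{\Omega}S_{0}(u)(\la y)\,dy$, so $\Omega(\la)=\la^{3}G(\la)$ and $\Omega'(\la)=3\la^{2}G(\la)+\la^{3}\int_{\Omega}y\cdot\nabla S_{0}(u)(\la y)\,dy$. The divergence theorem applied to the field $y\mapsto S_{0}(u)(\la y)\,y$ gives $\la\int_{\Omega}y\cdot\nabla S_{0}(u)(\la y)\,dy+3G(\la)=\int_{\partial\Omega}S_{0}(u)(\la y)(y\cdot n)\,dS$; substituting this back, the $G$-terms cancel and one obtains $\Omega'(\la)=\la^{2}\int_{\partial\Omega}S_{0}(u)(\la y)(y\cdot n)\,dS$. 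Equivalently this is Hadamard's first variation for $\la\Omega$, whose boundary point $\la y$ has $\la$-velocity $y$ and normal speed $y\cdot n$, followed by the change of variables $z=\la y$, $dS_{z}=\la^{2}\,dS_{y}$.

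For $\Omega''(\la)$ I would differentiate the expression just found under the integral over the compact surface $\partial\Omega$; this produces $2\la\int_{\partial\Omega}S_{0}(u)(\la y)(y\cdot n)\,dS$ plus a boundary term containing $y\cdot\nabla S_{0}(u)(\la y)$, and the job is to rewrite that last term. I would do so with the surface divergence theorem on the closed surface $\partial\Omega$ ($\partial(\partial\Omega)=\emptyset$): for the vector field $X(y):=S_{0}(u)(\la y)(y\cdot n)\,y$ one has $\int_{\partial\Omega}\operatorname{div}_{\partial\Omega}X\,dS=-\int_{\partial\Omega}(X\cdot n)H\,dS=-\int_{\partial\Omega}H(y)S_{0}(u)(\la y)(y\cdot n)^{2}\,dS$, which is exactly the identity used in the proof of Lemma~\ref{lem:7.4} and which is what produces the mean-curvature integral. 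Expanding $\operatorname{div}_{\partial\Omega}X$ with the product rule for the tangential divergence, the decomposition $\nabla S_{0}(u)(\la y)=(\partial_{n}S_{0}(u)(\la y))\,n+\nabla_{\partial\Omega}(S_{0}(u)(\la y))$, and $\operatorname{div}_{\partial\Omega}y=2$ in $\R^{3}$, I would match the remaining curvature-free surface integrals against $2\la\int_{\partial\Omega}S_{0}(u)(\la y)(y\cdot n)\,dS$ and collect the terms.

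The hard part will be this last step — getting the bookkeeping of $\operatorname{div}_{\partial\Omega}X$ right and isolating the $H$-term while keeping the sign and normalization conventions for $H$ and $n$ consistent with Lemma~\ref{lem:7.4}; should volume remainder terms survive the integration by parts, one uses $-\Delta S_{0}(u)=\frac12|u|^{2}$ to handle them. A cleaner route is to avoid the explicit vector field and differentiate $\Omega'$ directly through Hadamard's second-variation (moving-surface) formula, using that $\la\Omega$ is self-similar, so its mean curvature scales as $\la^{-1}$ and its normal velocity is $y\cdot n$; in either approach the curvature enters only through the normal motion of $\partial(\la\Omega)$. The remaining calculations — the product-rule differentiations and the cancellation of the $G$-terms — are routine.
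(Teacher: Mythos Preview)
Your proposal is correct and follows essentially the same route as the paper: both arguments differentiate the fixed-domain representation $\Omega(\la)=\la^3\int_\Omega S_0(u)(\la y)\,dy$, obtain $\Omega'(\la)$ via the divergence theorem applied to $y\mapsto S_0(u)(\la y)\,y$, and then compute $\Omega''(\la)$ by differentiating the boundary integral and invoking the surface divergence theorem on the vector field $X(y)=S_0(u)(\la y)(y\cdot n)\,y$ to produce the mean-curvature term. Your alternative viewpoint via Hadamard's moving-surface formula is precisely what the paper notes in Remark~\ref{rem:7.8}.
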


\begin{proof}
First we observe that 
\[
\begin{aligned}
\Omega^{\prime}(\la)
&=3 \la^2 \int_{\Omega} S_0(u)(\la y) \,d y
+\la^2 \int_{\Omega} \nabla_y S_0(u)(\la y) \cdot y \,d y \\
&= 3 \la^2 \int_{\Omega} S_0(u)(\la y) \,dy
+\la^2 \int_{\Omega} \operatorname{div}_y \big( S_0(u)(\la y) y \big) \,dy
-\la^2 \int_{\Omega} S_0(u)(\la y) \operatorname{div}_y y \,dy \\
&= \la^2 \int_{\partial \Omega} S_0(u)(\la y) (y \cdot n) \,dS.
\end{aligned}
\]
Similarly by the surface divergence theorem, one has
\[
\begin{aligned}
\Omega^{\prime \prime}(\la) 
&= 2 \la \int_{\partial \Omega} S_0(u)(\la y) (y \cdot n) \,d S 
+\la \int_{\partial \Omega} \nabla_y S_0(u)(\la y) \cdot y (y \cdot n) \,d S \\
&= 2\la \int_{\partial \Omega} S_0(u)(\la y) (y \cdot n) \,dS \\
&\quad +\la \int_{\partial \Omega} 
\operatorname{div}_y \big( S_0(u)(\la y) (y \cdot n)y \big) \,dS 
-\la \int_{\partial \Omega} 
S_0(u)(\la y) \operatorname{div}_y \big( (y \cdot n) y \big) \,dS \\
&= 2 \la \int_{\partial \Omega} S_0(u)(\la y) (y \cdot n) \,dS
-\la \int_{\partial \Omega} 
H(y) S_0(u)(\la y) (y \cdot n)^2 \,dS \\
&\quad -\la \int_{\partial \Omega} 
S_0(u)(\la y) \operatorname{div}_y y (y \cdot n) \,dS 
-\la \int_{\partial \Omega} 
S_0(u)(\la y) y \cdot \nabla_y (y \cdot n) \,dS \\
&= -2 \la \int_{\partial \Omega} S_0(u)(\la y) (y \cdot n) \,d S
-\la \int_{\partial \Omega} H(y) S_0(u)(\la y) (y \cdot n)^2 \,d S.
\end{aligned}
\]
This completes the proof.
\end{proof}

\begin{remark} \label{rem:7.8}
Lemma \ref{lem:7.7} is related with the
''calculus of moving surfaces'' due to Hadamard;
see \cite[(38)-(39)]{Gri}.
\end{remark}

Using Lemma \ref{lem:7.6} and Lemma \ref{lem:7.7}, 
we can establish the energy identity as follows.

\begin{lemma} \label{lem:7.9}
Suppose that $2<p<5$ and let $u_\la(x)= \la^2 u(\la x)$ for $\la>0$.
Under the assumption \ef{eq:7.1}, 
there exist $\alpha>0$, $\beta>0$, $\rho_0>0$ 
independent of $e$, $\rho$, $\la$ such that
if $\| u\|_{p+1} \ge \delta$ for some $\delta>0$ independent of $e$, $\rho$
and $\dis e^2 \sum_{i=1}^m \alpha_i D(\Omega_i) \le \rho_0$,
then the following estimate holds.
\[
I(u)-I(u_\la) -\frac{1-\la^3}{3} J(u) 
\ge \frac{(1-\la)^2 \omega}{6} \| u\|_2^2
+ \frac{\alpha \delta^{p-1}}{12(p+1)} (1-\la)^2 \| u \|_{p+1}^2 
\quad \text{for all} \ \la>0.
\]
\end{lemma}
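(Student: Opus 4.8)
The plan is to run the proof of Lemma \ref{lem:3.7} line by line, changing only Step 3. Since by Lemma \ref{lem:7.5} the functionals $P$ and $Q$ keep the forms \ef{eq:2.10} and \ef{eq:3.7}, the functional $J=2N-P$ keeps the form \ef{eq:3.1}, and hence the algebraic identity of Step 1 of Lemma \ref{lem:3.7} still holds:
\[
I(u)-I(u_\la)=\frac{1-\la^3}{3}J(u)+\frac{(1-\la)^2(\la+2)\omega}{6}B(u)+\frac{g(\la)}{3(p+1)}C(u)+R(\la,u),
\]
with $g(\la)=3\la^{2p-1}-(2p-1)\la^3+2p-4$ and
\[
R(\la,u)=\frac{8-2\la^3}{3}e^2E_1(u)-\frac{1-\la^3}{3}e^2E_2(u)+\frac{e^2\la^{-1}}{2}\intR S_0(u)\rho(\la^{-1}x)\,dx .
\]
Step 2 of Lemma \ref{lem:3.7} concerns only the polynomial $g$, so \ef{3.7-4} is unchanged. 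Thus everything reduces to proving, in the present setting, the analogue of \ef{3.7-12}, namely that $R(\la,u)\ge-\beta e^2\big(\sum_i\alpha_i D(\Omega_i)\big)(\|u\|_2^2+\|u\|_{p+1}^2)$ times $(1-\la)^2$ for $0\le\la\le T$ and times $\la^3$ for $\la\ge T$; once this is in hand, Step 4 of Lemma \ref{lem:3.7} applies verbatim (use $\|u\|_{p+1}\ge\delta$ to replace $\|u\|_{p+1}^{p+1}$ by $\delta^{p-1}\|u\|_{p+1}^2$ and choose $\rho_0$ small).

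First I would rewrite $R(\la,u)$ via the functions $\Omega_i(\la):=\int_{\la\Omega_i}S_0(u)\,dx$ of Lemma \ref{lem:7.7}. Since $\rho(\la^{-1}x)=\sum_i\alpha_i\chi_{\la\Omega_i}(x)$ we get $\intR S_0(u)\rho(\la^{-1}x)\,dx=\sum_i\alpha_i\Omega_i(\la)$, while the new definitions of $E_1,E_2$ together with Lemma \ref{lem:7.7} at $\la=1$ give $E_1(u)=-\frac14\sum_i\alpha_i\Omega_i(1)$ and $E_2(u)=-\frac12\sum_i\alpha_i\Omega_i'(1)$. Hence $R(\la,u)=e^2\sum_i\alpha_i G_i(\la)$ with
\[
G_i(\la)=-\frac{4-\la^3}{6}\Omega_i(1)+\frac{1-\la^3}{6}\Omega_i'(1)+\frac{\Omega_i(\la)}{2\la},
\]
and a direct computation using Lemma \ref{lem:7.7} gives $G_i(1)=G_i'(1)=0$ and expresses $G_i''(\la)$ as a combination of $\Omega_i(1),\Omega_i'(1),\Omega_i(\la),\Omega_i'(\la),\Omega_i''(\la)$ divided by powers of $\la$. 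The quantitative input is the scaling $D(\la\Omega)=\la^{5/2}D(\Omega)$, which one checks from the definition of $D$: under $x\mapsto\la x$ one has $L\mapsto\la L$, $|\Omega|\mapsto\la^3|\Omega|$, $|\partial\Omega|\mapsto\la^2|\partial\Omega|$, $\|H\|_{L^2(\partial\Omega)}$ invariant, $\kappa_1(\Omega)|\Omega|^{1/3}$ invariant, and $\kappa_2(\Omega)$ invariant (the torsion function rescales as $w_\la(x)=\la w(x/\la)$). Applying the estimates in the proof of Lemma \ref{lem:7.6} to the scaled domains $\la\Omega_i$ (whose boundary integrals, after the change of variables to $\partial(\la\Omega_i)$, reproduce $\Omega_i(\la)$, $\la^{-1}\Omega_i'(\la)$ and the mean-curvature part of $\Omega_i''(\la)$), and using $|\la\Omega_i|^{5/6}\le C\,D(\la\Omega_i)$ from \ef{eq:7.2}, yields
\[
|\Omega_i(\la)|\le C\la^{5/2}D(\Omega_i)\|\nabla S_0(u)\|_2,\quad
|\Omega_i'(\la)|\le C\la^{3/2}D(\Omega_i)\|\nabla S_0(u)\|_2,\quad
|\Omega_i''(\la)|\le C\la^{1/2}D(\Omega_i)\|\nabla S_0(u)\|_2.
\]

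Feeding these bounds into $G_i$ and $G_i''$ and splitting into $0\le\la\le\frac12$ (direct estimate, using $(1-\la)^2\ge\frac14$ and $\Omega_i(\la)/\la\to0$), $\frac12\le\la\le T$ (Taylor's theorem with $G_i(1)=G_i'(1)=0$ and the uniform bound on $G_i''$ over $[\frac12,T]$, where the negative powers of $\la$ are harmless), and $\la\ge T$ (direct estimate, all powers of $\la$ dominated by $\la^3$), produces exactly
\[
R(\la,u)\ge-\beta e^2\Big(\sum_i\alpha_iD(\Omega_i)\Big)\|\nabla S_0(u)\|_2
\begin{cases}(1-\la)^2,&0\le\la\le T,\\[2pt]\la^3,&\la\ge T,\end{cases}
\]
for some $\beta=\beta(T)>0$; since $\|\nabla S_0(u)\|_2\le C(\|u\|_2^2+\|u\|_{p+1}^2)$ for $2<p<5$ by Lemma \ref{lem:2.2}, this is the sought analogue of \ef{3.7-12}, and the proof concludes as in Lemma \ref{lem:3.7}. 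The main obstacle I expect is the scaling bookkeeping: carefully confirming $D(\la\Omega)=\la^{5/2}D(\Omega)$ (in particular the dilation-invariance of $\kappa_2$ and of $\|H\|_{L^2(\partial\Omega)}$), and that the boundary integrals coming from Lemma \ref{lem:7.7} really are controlled by $D(\la\Omega_i)$ after changing variables to $\partial(\la\Omega_i)$, so that the proof of Lemma \ref{lem:7.6} can be reused domain by domain. Everything else is a transcription of Lemma \ref{lem:3.7}.
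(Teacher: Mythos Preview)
Your proposal is correct and follows essentially the same route as the paper: rewrite $R(\la,u)=e^2\sum_i\alpha_i G_i(\la)$ with $G_i(1)=G_i'(1)=0$, split $\la$ into $[0,\tfrac12]$, $[\tfrac12,T]$, $[T,\infty)$, use Taylor's theorem on the middle range, and bound $|\Omega_i(\la)|,|\Omega_i'(\la)|,|\Omega_i''(\la)|$ by $C\la^{5/2},C\la^{3/2},C\la^{1/2}$ times $D(\Omega_i)\|\nabla S_0(u)\|_2$. The only cosmetic difference is that the paper obtains these $\la$-bounds by applying Proposition~\ref{prop:7.1} on the fixed domain $\Omega_i$ to the rescaled function $y\mapsto S_0(u)(\la y)$, whereas you change variables to $\la\Omega_i$ and invoke the scaling law $D(\la\Omega)=\la^{5/2}D(\Omega)$; these are the same computation. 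One small simplification: on the outer ranges the paper simply drops the term $\Omega_i(\la)/(2\la)$ using $S_0(u)\ge0\Rightarrow\Omega_i(\la)\ge0$, which is cleaner than your ``$\Omega_i(\la)/\la\to0$'' remark (boundedness, not a limit, is what matters).
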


\begin{proof}
Under the notation of Lemma \ref{lem:7.7}, 
we can write the remainder term $R(\la,u)$ as follows:
\[
\begin{aligned}
R(\la,u) &= \frac{8-2\la^3}{3} e^2 E_1(u) -\frac{1-\la^3}{3} e^2 E_2(u)
+\frac{e^2 \la^{-1}}{2} \int_{\R^3} S_0(u) \rho (\la^{-1} x) \,dx \\
&= e^2 \sum_{i=1}^m \alpha_i \left\{
-\frac{4-\la^3}{6} \int_{\Omega_i} S_0(u) \,dx 
+\frac{1-\la^3}{6} \int_{\partial \Omega_i} S_0(u) x \cdot n \,dS_i
+\frac{\la^{-1}}{2} \int_{\la \Omega_i} S_0(u) \,dx \right\} \\
&= e^2 \sum_{i=1}^m \alpha_i \left\{ 
\frac{\la^3-1}{6} \left( \Omega_i(1) - \Omega_i^{\prime}(1) \right)
- \frac{\Omega_i(1)}{2} + \frac{\Omega_i(\la)}{2\la} \right\}.
\end{aligned}
\]
Let $T \ge 4$ be chosen so that $T^{2p-4} \ge 3$ and put
\[
G(\la) := \frac{\la^3-1}{6} \left( \Omega_i(1)-\Omega_i^{\prime}(1)\right)
-\frac{\Omega_i(1)}{2}+\frac{\Omega_i(\la)}{2 \la}.
\]
For $\la \ge T$, it follows that
\[
G(\la) \ge 
-\frac{\la^3-1}{6} \left( |\Omega_i(1)|+ |\Omega_i^{\prime}(1)|\right)
-\frac{1}{2} |\Omega_i(1)| 
\ge - \la^3 \left( \frac{2}{3} |\Omega_i(1)|
+\frac{1}{6} | \Omega_i^{\prime}(1)| \right).
\]
Similarly one has
\[
G(\la) \ge -\left(\frac{2}{3} |\Omega_i(1)|+\frac{1}{6}|\Omega_i^{\prime}(1)|\right) 
\ge -4(1-\la)^2 \left( \frac{2}{3} |\Omega_i(1)|
+\frac{1}{6} |\Omega_i^{\prime}(1)| \right)
\quad \text{for} \ 0 \le \la \le \frac{1}{2}.
\]
When $\frac{1}{2} \le \la \le T$, we see that $G(1)=G'(1)=0$ and
\[
\begin{aligned}
G^{\prime \prime}(\la) 
&= \la\left(\Omega_i(1)-\Omega_i^{\prime}(1)\right)
+\frac{1}{\la^3} \left( \Omega_i^{\prime}(\la)-\la \Omega_i^{\prime}(\la)
+\frac{\la^2}{2} \Omega_i^{\prime \prime}(\la)\right) \\
&\ge -\la \left( |\Omega_i(1)|+ |\Omega_i^{\prime}(1)| \right) 
-\frac{1}{\la^3} \left( |\Omega_i(\la)|+\la | \Omega_i^{\prime}(\la)|
+\frac{\la^2}{2} | \Omega_i^{\prime \prime}(\la)| \right) 
=: -\tilde{N}(\la) .
\end{aligned}
\]
Then by the Taylor theorem, 
there exists $\xi = \xi(\la) \in \left( \frac{1}{2}, T \right)$ such that
\[
G(\la) \ge -\frac{1}{2} \tilde{N}(\xi)(1-\la)^2 \quad \text{for} \ 
\frac{1}{2} \le \la \le T.
\]

Now by Proposition \ref{prop:7.1} and Lemma \ref{lem:7.7},
arguing similarly as Lemma \ref{lem:7.6}, one finds that
\[
\begin{aligned}
\frac{|\Omega_i(\la)|}{\la^3}
&\le \int_{\Omega_i} | S_0(u)(\la y)| \,d y 
\le |\Omega_i|^{\frac{5}{6}} \| S_0(u) (\la y) \|_{L^6(\R^3)} 
\le C \la^{-\frac{1}{2}} D(\Omega_i) \| \nabla S_0(u) \|_{L^2(\R^3)}, \\
\frac{|\Omega_i^{\prime}(\la)|}{\la^2}
&\le \int_{\partial \Omega_i} | S_0(u)(\la y)| |y| \,dS 
\le C \la^{-\frac{1}{2}} D(\Omega_i) \| \nabla S_0(u) \|_{L^2(\R^3)}, \\
\frac{|\Omega_i^{\prime \prime}(\la)|}{\la}
&\le 2 \int_{\partial \Omega_i} | S_0(u)(\la y)| |y| \,dS
+ \int_{\partial \Omega_i} |H_i(y)| | S_0(u)(\la y)| |y|^2 \,dS 
\le C \la^{-\frac{1}{2}} D(\Omega_i) \| \nabla S_0(u) \|_{L^2(\R^3)}.
\end{aligned}
\]
Then by Lemma \ref{lem:2.2}, we have
\[
\tilde{N}(\xi) \le T \left( |\Omega_i(1)|+|\Omega_i^{\prime}(1)|\right)
+ \sqrt{2} \left( C D(\Omega_i) \| \nabla S_0(u) \|_{L^2(\R^3)} \right) 
\le C D(\Omega_i ) \left( \| u\|_2^2+\| u\|_{p+1}^2\right)
\]
and hence
\[
R(\la, u) \ge \begin{cases}
\dis - \beta(1-\la)^2 e^2 \sum_{i=1}^m \alpha_i D(\Omega_i)
\left(\| u\|_2^2+\| u\|_{p+1}^2\right) &\text{for} \ 0 \le \la \le T, \medskip \\
\dis -\beta \la^3 e^2 \sum_{i=1}^m \alpha_i D(\Omega_i)
\left( \| u\|_2^2+\| u \|_{p+1}^2 \right) &\text{for} \ \la \ge T,
\end{cases}
\]
for some $\beta>0$ independent of $e$, $\rho$, $\la$.
The remaining parts can be shown 
in the same way as Lemma \ref{lem:3.7}.
\end{proof}

\begin{proof}[Proof of Theorem \ref{thm:7.3}]
By Lemma \ref{lem:7.5}, Lemma \ref{lem:7.6} and Lemma \ref{lem:7.9}, 
we are able to modify the proofs in Sections 3-5. 
\end{proof}

\bigskip
\noindent {\bf Acknowledgements.}

The second author has been supported by JSPS KAKENHI Grant Numbers 
JP21K03317, JP24K06804.
This paper was carried out while the second author was staying 
at the University of Bordeaux. 
The second author is very grateful to all the staff of the University of Bordeaux 
for their kind hospitality.


\medskip

\end{document}